\renewcommand{\Box}{\framebox{\rule{0.3em}{0.0em}}}
\newtheorem{thm}{Theorem}[section]
\newtheorem{lema}{Lemma}[section]
\newtheorem{prop}{Proposition}[section]
\newtheorem{ex}{Example}[section]
\newtheorem{defi}{Definition}[section]
\newtheorem{cor}{Corollary}[section]
\newcommand{\bgeqn}{\begin{eqnarray}}
\newcommand{\edeqn}{\end{eqnarray}}
\newcommand{\bgeq}{\begin{eqnarray*}}
\newcommand{\edeq}{\end{eqnarray*}}
\newcommand{\bgc}{\begin{center}}
\newcommand{\edc}{\end{center}}
\newcommand{\beq}{\begin{equation}}
\newcommand{\eeq}{\end{equation}}
\newcommand{\beqa}{\begin{eqnarray}}
\newcommand{\eeqa}{\end{eqnarray}}
\newcommand{\beqas}{\begin{eqnarray*}}
\newcommand{\eeqas}{\end{eqnarray*}}
\newcommand{\ba}{\begin{array}}
\newcommand{\ea}{\end{array}}
\newcommand{\bi}{\begin{itemize}}
\newcommand{\ei}{\end{itemize}}
\def\dist{\mathop{\rm dist}}
\def\cL{{\cal L}}
\def\cC{{\cal C}}
\def\cN{{\cal N}}
\def\cI{{\cal I}}
\def\cF{{\cal F}}
\def\cB{{\cal B}}
\def\cM{{\cal M}}
\def\cS{{\cal S}}
\def\cX{{\cal X}}
\def\gph{{\rm gph}}
\def\bx{{\bar x}}
\def\by{{\bar y}}
\def\co{{\rm co}\,}
\def\Re{{\rm I\!R}}
\renewcommand{\Box}{\hfill \rule{2.3mm}{2.3mm}}
\newenvironment{proof}{\noindent{\bf Proof. }}{\hfill $\Box$\medskip}
\title{Sensitivity analysis of the maximal value function  with applications in  nonconvex minimax programs}
\author{Lei Guo\thanks{\baselineskip 9pt School of Business, East China University of Science and Technology, Shanghai 200237, China. E-mail: lguo@ecust.edu.cn.}, \ \ Jane J. Ye\thanks{Department of Mathematics and
Statistics, University of Victoria, Victoria, BC, V8W 2Y2, Canada. E-mail: janeye@uvic.ca.}, \ \   Jin Zhang \thanks{Corresponding author. Department of Mathematics,
SUSTech International Center for Mathematics, Southern University of Science and Technology. National Center for Applied Mathematics Shenzhen, Peng Cheng
Laboratory, Shenzhen, Guangdong, China. E-mail: zhangj9@sustech.edu.cn.} }
\begin{document}

\maketitle

\baselineskip 16pt

\begin{abstract}
In this paper, we perform sensitivity analysis for the maximal value function which is the optimal value function for a parametric maximization problem. Our aim is to study various subdifferentials for the maximal value function. We obtain upper estimates of Fr\'{e}chet, limiting, and horizon subdifferentials of the maximal value function by using some sensitivity analysis techniques sophisticatedly. The derived upper estimates depend only on the union of all solutions and not on its convex hull or only one solution from the solution set. Finally, we apply the derived results to develop some new necessary optimality conditions for nonconvex minimax problems. In the nonconvex-concave setting, our Wolfe duality approach compare favourably with the first order approach in that the necessary condition is sharper and the constraint qualification is weaker.
\end{abstract}
\vspace{4pt}
\noindent {\bf Keywords:} Sensitivity analysis, value function, Wolfe duality,  nonconvex minimax problem, non-Lipschitz continuity, necessary optimality conditions.

\vspace{4pt}
\noindent{\bf MSC2020 Classification:} 90C30, 90C31, 90C47.

\baselineskip 16pt

\section{Introduction}

The sensitivity analysis of the value function  for a parametric optimization problem is  very useful in many fields such as bilevel optimization, minimax programming, stochastic programming, robust optimization, dynamic programming, comparative statics analysis in economics.

In this paper we consider the following parametric maximization problem
\begin{eqnarray*}
(P_x)~~~~~~
\max\limits_y \ f(x,y) \ \
{\rm s.t. } \ \ y\in {\cal F}(x):=\{y\in \Re^{m}: g(x,y)\leq 0\},
\end{eqnarray*}
where $f:\Re^{n+m}\to \Re, g:\Re^{n+m}\to \Re^q$. Unless otherwise specified, we assume that $f(x,y),g(x,y)$ are continuously differentiable functions. Define the value function for the maximization problem or the maximal value function as
\[
V(x) := \sup\{f(x,y): y\in \cF(x)\},
\]
where by convention $V(x)=-\infty$ if $\cF(x)=\emptyset$, and the optimal solution map  as
\[
\cS(x):=\{y\in \cF(x): f(x,y)=V(x)\}.
\]
The  maximal value function  $V(x)$ is obviously related to the corresponding minimal value function  defined by
$$v(x):= \inf\limits_y \{ -f(x,y): y\in {\cal F}(x)\}$$ in the following way
\begin{eqnarray}
V(x)=-v(x).\label{negrelation}
\end{eqnarray}

The first goal of this paper is to study various subdifferentials of the maximal value function. The optimal value function of an optimization problem is intrinsically nonsmooth even if all involved functions are smooth. Generalized differential property of the value function is intensively studied in the literature. One stream focuses on the directional derivatives of the value function
(e.g., Bonnans and Shapiro \cite{BonShap}, Fiacco and Ishizuka \cite{Fiacco-Ishizuka}, Gauvin and Dubeau \cite{Gauvin-Dubeau}, Janin \cite{Janin}).
The other stream investigates the subdifferentials of the value function
(e.g., Clarke \cite{Clarke}, Gauvin \cite{Gauvin}, Guo et al.   \cite{sensitivity}, Lucet and Ye \cite{Lucet-Ye,Lucet-Ye2001}, Mordukhovich et al. \cite{Mordukhovich-Nam-Yen}, Thibault \cite{Thibault}, Ye and Zhang \cite{jinjane}).
In this paper, we focus on the latter stream partly because we will apply the derived results to develop necessary optimality conditions for minimax problems.

An upper estimate of the limiting subdifferential of the minimal value function  is relatively easier to obtain
(e.g., Clarke \cite{Clarke}, Guo et al. \cite{sensitivity}, Lucet and Ye \cite{Lucet-Ye,Lucet-Ye2001}).
By using the relationship (\ref{negrelation}), one hopes to obtain an upper estimate of the limiting subdifferential  of the maximal value function through the one for the corresponding minimal value function. This can be done if the maximal value function $V(x)$ is locally  Lipschitz continuous. Indeed,  it is well-known that if a function is locally Lipschitz continuous, then its Clarke subdifferential is homogenous, i.e.,
$$\partial^c  V(x)= \partial^c (-v)(x) = -\partial^c v(x),$$
while the limiting subdifferential is not homogeneous, i.e.,
$$\partial  V(x)=\partial (-v) (x)\not =-\partial v (x),$$
where $\partial$ and $\partial^c$ denote  the limiting and the Clarke subdifferential operators, respectively.
Hence if one needs to study the limiting subifferential of the maximal value function $V(x)$ through the one for the minimal value function $v(x)$, one could first ensure that the value function is locally Lipschitz continuous and then use the
following upper estimation
\begin{equation}\label{relation}
\partial V(x)\subseteq \partial^c  V(x)=\partial^c (-v)(x)=-\partial^c  v(x) =-\co \partial v(x),
\end{equation}
where $\co C$ denotes the convex hull of set $C$. In general, the inclusion in (\ref{relation}) may be strict unless the function $V(x)$ is Clarke regular (e.g., when the constraint set is a fixed compact subset as in Danskin's theorem, or when the value function is smooth).

In general, an upper estimate of the limiting subdifferential
$\partial v(x)$ involves the union of all solutions from the solution set ${\cal S}(x)$ (see e.g., (\ref{upper-estimate}) in Section \ref{sec:sub}) and hence any vector in $\partial v(x)$ may have an expression using {\it only one} solution from ${\cal S}(x)$. However, by \eqref{relation} the upper estimate for the limiting subdifferential
$\partial V(x)$ would involve the convex hull of the union of all solutions from ${\cal S}(x)$ and thus any vector in $\partial V(x)$ will depend on {\it all} solutions. Therefore using the limiting subdifferential would be complicated and even prohibitive in practice.
In this paper we aim at studying {the possibility of obtaining an estimate for the limiting subdifferential $\partial V(x)$ with the union of all solutions or only one solution from the solution set.}
We
first consider the case when $V(x)$ is locally Lipschitz continuous. In this case, the relationship \eqref{relation} holds. We will give some sufficient conditions for deriving an easily computable convex upper estimate for $\partial v(x)$. In this case, the convex hull operation would be superfluous and an easily computable upper estimate for $\partial V(x)$ would be derived.
Second, we consider the case when the value function is not necessarily locally Lipschitz continuous and we still wish to express any vector in the limiting subdifferential of the maximal function with only one solution from the solution set. Assuming the functions $-f(x,y),g(x,y)$ are convex in $y$, we employ the Wolfe dual (Wolfe \cite{wolfe}) to equate the maximal value function $V(x)$ with the minimal value function of the Wolfe dual for problem $(P_x)$ (see (\ref{ValuefunctionD})). Since the Wolfe dual of the maximization problem is a minimization problem, we do not need to employ the Clarke subdifferential to obtain an upper estimate for  $\partial V(x)$. {Consequently we have successfully obtained an upper estimate for the limiting or horizon subdifferential of the maximal value function $V(x)$ with the union of all solutions from the solution set and we can express any vector in the limiting or horizon subdifferential of the maximal value function with only one solution from the solution set.}

The minimax problem is an important research focus in mathematics, economics and computer science
(e.g., Von Neumann\cite{Neumann}, Von Neumann and Morgenstern \cite{Neumann-Morgenstern}, Sion \cite{Sion}). Most of the existing results for minimax problems focus on the convex-concave setting and study smooth minimax problem of the form
\begin{equation}
(P_{\rm minimax})~~~~~\min_{x\in X} \max_{y\in Y} f(x,y), \label{minimaxnew}
\end{equation}
where the function $f(\cdot, y)$ is convex for every $y\in Y$ and $f(x,\cdot)$ is concave for every $x\in X$, and $X,Y$ are closed convex sets. In this setting, the celebrated minimax theorem for the convex-concave setting (Sion \cite[Corollary 3.3]{Sion}) says that if either $X$ or $Y$ is compact, then
\begin{equation*}\label{minimax}
\min_{x\in X} \max_{y\in Y} f(x,y) = \max_{y\in Y} \min_{x\in X} f(x,y).
\end{equation*}
This property means that the game between two players (the player with decision $x$ and the one with decision $y$) is simultaneous and one can use the Nash equilibrium to define the optimality which is the well-known notion of optimality in simultaneous-move games.
We say that
 $(\bar x,\bar y)\in X \times Y$ is an optimal solution to the minimax problem $(P_{\rm minimax})$ in the sense of Nash (equilibrium) if and only if
$$
f(\bar x,y)  \le f(\bar x,\bar y) \le  f(x,\bar y),\quad \forall (x,y)\in X\times Y.
$$
In the convex-concave setting, the  necessary and sufficient first-order stationary condition in the sense of Nash equilibrium is as follows
\begin{equation}
0\in \nabla_x f(\bar x,\bar y)+\cN_X(\bar x),  \qquad  0\in -\nabla_y f(\bar x,\bar y)+\cN_Y(\bar y),\label{Nash}
\end{equation}
where $\cN_C(c)$ denotes the normal cone to set $C$ at point $c$ in the sense of convex analysis.

However many minimax problems in practice are not in the convex-concave setting. In particular many machine learning tasks {that appear in recent years} can only be formulated as $(P_{\rm minimax})$ with $f$ being nonconvex in $x$.
 Examples include generative adversarial networks (GAN) (Goodfellow et al. \cite{GAN2014}), distributionally robust learning, learning with non-decomposable loss, and Robust learning from multiple distributions (see  Rafique et al. \cite{RLLY} and the references within).
 We now consider the general minimax problem (\ref{minimaxnew}) where the function $f(x, y)$ is continuously differentiable and  $X,Y$ are closed  sets.
Under this general setting,
the order of the ``min" and the ``max" cannot be interchanged and hence the game is a sequential-move one. For a sequential-move game, it makes more sense to use the concept of a Stackelberg equilibrum. We say that $(\bar x, \bar y)$ with $\bar x\in X, \bar y\in Y$ is a locally optimal solution of $(P_{\rm minimax})$ in the sense of Stackelberg (equilibrum) if $\bar y$ is a globally optimal solution of the inner maximization problem
$\max_{y\in Y} f(\bar x,y)$ and $\bar x$ is a locally optimal solution of the problem
 $$(P_V)~~~~~~\min_{x\in X} V (x),$$
 where $V(x)=\sup_{y\in Y} f(x,y)$.
Note that in the  notion of optimality in the sense of Stackelberg, $\bar y$ must be a global solution of the inner maximization problem. This is the kind of optimality we will study in this paper.  Recently, concepts of local optimality for minimax problems where $\bar y$ is a local solution of the inner maximization problem have been studied; see e.g.,  \cite{Jin,DaiZhang,Jie}.

When the restricted sup-compactness condition (Definition \ref{Defn3.2}) holds at a solution $\bar x$ (e.g. when
$Y$ is a compact set),
the value function is  Lipschitz continuous  around  $\bar x$ and
$$\partial V(\bar x)\subseteq  \co \left \{ \nabla_x f(\bar x,  y'): y'\in \cS(\bar x)\right \}.$$
 Hence if $\bar x$ is a locally optimal solution of problem $(P_V)$, then  we have
\begin{equation}\label{Fermat}
0\in \partial V (\bx) +\cN_X(\bx),
\end{equation} where $\cN_C(c)$ denotes the limiting normal cone to set $C$ at point $c$, by which we obain
\begin{equation}\label{Fermatnew}0\in \co \left \{ \nabla_x f(\bar x,  y'): y'\in \cS(\bar x)\right \}+ \cN_X(\bx).\end{equation}
Moreover by Caratheodory's theorem,
there exist
$$
\lambda_i\geq 0, y_i\in \cS(\bar x)\ i=1,\ldots, n+1 \ {\rm with}\ \sum_{i=1}^{n+1}\lambda_i=1,
$$
such that
\begin{equation}
0 \in  \sum_{i=1}^{n+1} \lambda_i \nabla_x f(\bar x,  y_i)+ \cN_X(\bx), \quad 0\in -\nabla_y f(\bar x, y_i)+  \cN_Y(y_i),\ \ i=1,\ldots, n+1.
\label{caratheodory}
\end{equation}
However this kind of necessary optimality condition is not very practical. It may be used to verify that a feasible solution is not a locally optimal solution. But even this verification is not easy since if involves the whole solution set $\cS(\bar x)$.

Since minimax problems are special cases of bilevel programs or semi-infinite programs, the first order approach by which one replaces the inner maximization problem by its first order optimality condition, may be used. For the case where $f(x,y)$ is  concave in variable $y$ and $Y$ is a convex set,  by the first order approach, $(P_{\rm minimax})$ is equivalent to
$$\min_{x\in X,y\in Y} \ f(x,y)\quad
\mbox{ s.t. }  0\in  \nabla_y f(x,y)+\cN_Y(y) .$$
Let $(\bar x,\bar y)$ be a locally optimal solution of  $(P_{\rm minimax})$ in the sense of Stackelberg. If  $\bar y $ lies in the interior of set $Y$, then $(\bar x,\bar y)$ is a locally optimal solution of
$$\min_{x\in X, y\in \Re^m} \ f(x,y)\
\mbox{ s.t. }  \nabla_y f(x,y)=0.$$
 Further assume that $f$ is second-order continuously differentiable and a constraint qualification holds at  $(\bx,\by)$ for the equality constraint system
$
\nabla_y f(x,y)=0.
$
Then by the first order necessary optimality condition, there exists  a multiplier $\bar u$ such that
\begin{equation}\label{MPEC}
\begin{aligned}
0\in \nabla_x f(\bx,\by) + \nabla_{xy}^2 f(\bx,\by)\bar u +\cN_X(\bar x),\\
 \nabla_{yy}^2 f(\bx,\by)\bar u=0,\  \nabla_y f(\bar x,\bar y)=0.\end{aligned}
 \end{equation}

 Comparing two stationarity systems (\ref{caratheodory}) and (\ref{MPEC}) for the optimality in the sense of Stackelberg, one can see that in general all solutions from the solution set $\cS(\bar x)$ are needed in system (\ref{caratheodory}) while the system (\ref{MPEC}) only needs one solution but the concavity of the inner maximization problem is required and the second order derivatives are involved.
Unlike system (\ref{caratheodory}), (\ref{MPEC}) is a track-able  system of variational inequality in variables $x,y,u$.

In this paper we  study  the minimax problem  $\min\limits_{x\in X}\max\limits_{y\in \cF(x)} f(x,y)$. For this general minimax problem, we obtain the following two new results.
\begin{itemize}
\item[(i)]
Consider the nonconvex-concave setting where the inner maximization problem is concave. In this case, mathematical program with equilibrium constraints (MPEC) approach can be used. But if we use the Karush-Kuhn-Tucker (KKT) condition to replace the inner maximization problem and minimize over the original variables and the multipliers, the locally optimal solution of the
reformulated problem does not correspond to that of the original minimax problem (see Dempe and Dutta \cite{Dem-Dut} in the context of bilevel programs). Moreover the constraint qualification for the corresponding MPEC is in general very strong (see e.g., Gfrerer and Ye \cite{HY2018}).
In this paper by using the Wolfe duality approach,
we show that under reasonable constraint qualifications, a locally optimal solution of the minimax problem in the sense of Stackelberg is a strong (S-) stationarity condition, which is known to be the strongest stationary condition for MPECs.
Hence our new approach allow us to derive shaper optimality conditions under weaker constraint qualifications than the MPEC approach (see Theorems \ref{thm-mi}, \ref{thm-mima} and \ref{thm-com}).
\item[(ii)] For the general nonconvex-nonconcave setting, under either the inner semi-continuity of the solution mapping $\cS(x)$ or the concavity of the value funtion $V(x)$, we show that the  optimality condition depends on only one solution from the solution set $\cS(\bar x)$ (see Theorems \ref{thm5.3} and \ref{thm5.4}).  Applying to the case where $\cF(x)=Y$, we have that under either the inner semi-continuity of the solution mapping $\cS(x)$ or the joint concavity of $f(x,y)$ with convexity of $Y$, the optimality condition in terms of Stackelberg coincides with the one in terms of Nash. That is, system (\ref{Nash})
holds with $\cN_X(\bar x)$ being the limiting normal cone to set $X$ at $\bar x$.

\end{itemize}

The remainder of this paper is organized as follows. In Section \ref{sec:pre}, we give some preliminaries and preliminary results  needed in the paper.
In Section \ref{sec:sub}, we give upper estimates for the Fr\'{e}chet, limiting, and horizon subdifferentials of the maximal value function.
In Section \ref{sec:minimax}, we develop necessary optimality conditions for nonconvex minimax problems. We also apply our new optimality conditions to a special case of GAN to illustrate the advantage of our approach.    Section \ref{sec:con} concludes the paper.

\section{Preliminaries and preliminary results}\label{sec:pre}

\subsection{Basic notation}
For any $x\in \Re^n$, $\|x\|$ denotes its Euclidean norm in $\Re^n$. For $x\in \Re^n$ and $\delta>0$, we denote by $\cB_\delta(x):=\{y:\|y-x\|< \delta\}$
the open ball
 centered at $x$ with radius $\delta$ and {$\mathbb{B}$} the closed unit ball.
The boundary and interior of a subset $\Omega\subseteq \Re^n$ are denoted by ${\rm bdy} \Omega$ and ${\rm int} \Omega$, respectively. For a vector-valued function $\phi=(\phi_1,\ldots, \phi_m):\Re^n\to\Re^m$ and a point $x\in \Re^n$, we denote by $\cI_\phi(x):=\{i: \phi_i(x)=0\}$ the active index set at $x$ and  by $\nabla \phi(x)\in \mathbb{R}^{n\times m}$  the transposed Jacobian of $\phi$ at $x$.
For a set-valued map $\Phi:\Re^n \rightrightarrows\Re^m$, we denote its graph by $\gph\Phi :=\{(x,y): y\in \Phi(x)\}$. Given a set $\Omega\subseteq \Re^n$ and
a point $x\in \Re^n$, the distance from $x$ to $\Omega$ is denoted by
$\dist(x,\Omega):=\mbox{inf}\,\{\|y-x\|: y\in \Omega\}.$ For two vectors $a,b\in \Re^n$, the relation that $a$ is perpendicular to $b$ is denoted as $a\ \bot\ b$.
For a sequence $\{a^k\}_{k=1}^\infty$ and a point $a$, the condition that $\{a^k\}_{k=1}^\infty$ converges to $a$ as $k\to\infty$ is denoted by $a^k\to a$.
\textcolor{red}{
}

\subsection{Variational analysis}

We give some background materials on variational analysis in this subsection; see, e.g., Clarke \cite{Clarke}, 
Mordukhovich \cite{mor-book},
Rockafellar and Wets \cite{Rock98} for more details.

 For a function $\varphi:\Re^n\to[-\infty,\infty]$ and a point $x^*\in \Re^n$ where $\varphi(x^*)$ is finite, the
regular (or Fr\'{e}chet) subdifferential of $\varphi$ at $x^*$ is defined
by
$$\widehat{\partial}\varphi(x^*):=\left \{v: \varphi(x)\geq \varphi(x^*)+v^\top(x-x^*)+o(\|x-x^*\|)\right \},$$
the limiting (or Mordukhovich) subdifferential of $\varphi$ at $x^*$ is
defined by
$$\partial\varphi(x^*):= \left \{v: \exists x^k\to_\varphi x^*, \exists v^k\in \widehat{\partial}\varphi(x^k) \quad {\rm s.t.}\ v^k\to v \right \},$$
and the horizon (or singular Mordukhovich) subdifferential of $\varphi$ at
$x^*$ is defined by
$$\partial^{\infty}\varphi(x^*):=\left \{v: \exists x^k\to_\varphi x^*, v^k\in \widehat{\partial}\varphi(x^k)\ {\rm and}\ t_k\downarrow 0, \quad {\rm s.t.}\ t_kv^k\to v \right \},$$
where $o(\cdot)$ means $o(\alpha)/\alpha \to 0$ as $\alpha \downarrow 0$, and $x^k \to_\varphi x^*$ means that $x^k\to x^*$ and $\varphi(x^k)\to\varphi(x^*)$. It is well-known that a lower semi-continuous function $\varphi$ is Lipschitz continuous at $x$ if and only if $\partial^\infty \varphi(x)=\{0\}$ by Rockafellar and Wets \cite[Theorem 9.13]{Rock98}. If $\varphi$ is Lipschitz continuous at $x^*$, then  the Clarke subdifferential at $x^*$ is
$$\partial^c \varphi(x^*)=\co \partial \varphi(x^*).$$

The regular normal cone to a set $\Omega$ at $x^*\in \Omega$ is a closed cone defined by
$$\widehat{\cal N}_\Omega(x^*):=\{\xi\,:\  \xi^T (x-x^*) \leq o(\|x-x^*\|) \ {\rm for \ each} \ x\in \Omega\},$$
and the limiting normal cone to $\Omega$ at $x^*\in \Omega$ is a closed cone defined by
$${\cal N}_\Omega(x^*):=\{\xi \,: \ \xi=\lim_{k\to\infty}\xi^k\ {\rm with}\  \xi^k\in \widehat{\cal N}_\Omega(x^k)\ {\rm and} \ x^k\to_{\Omega} x^* \},$$
where $\ x^k\to_{\Omega} x^*$ means $x^k\rightarrow x^*$ and $x^k\in \Omega$ for all $k$.

\subsection{Constraint qualifications}


In this subsection, we introduce a unifying constraint qualification (CQ) that will be used in the subsequent analysis. We first review some CQs and their properties for the following  parametric nonlinear program
\begin{eqnarray}\label{nlp}
({\cal P}_p)~~~~~~~~~~\begin{array}{rl}
\max\limits_x & f(p,x)\\[4pt]
{\rm s.t.} & x\in \cX(p):=\{x: g(p,x)\le0, h(p,x)=0\},
\end{array}
\end{eqnarray}
where $f: \Re^m \times \Re^n \to  \Re,\ g:\Re^{m}\times\Re^n \to \Re^p$ and $h:\Re^{m}\times\Re^n \to \Re^q$ are continuously differentiable.

Constraint qualifications  are conditions  imposed on constraint systems such that local minimizers satisfy Karush-Kuhn-Tucker (KKT) conditions. When the constraint functions are all affine,  no extra condition is required for KKT conditions to hold at an optimal solution while for nonlinear constraint systems, certain CQ is required. The weakest CQ is the so-called Guignard's CQ but it is hard to verify. The verifiable CQs include the well-known linearly independent CQ (LICQ), Mangasarian-Fromovitz (MFCQ) and Slater's condition.
Some weaker and verifiable CQs are also proposed in the literature such as the  constant rank constraint qualification (CRCQ), the relaxed constant rank constraint qualification (RCRCQ), the relaxed constant positive linear dependence (RCPLD), and the quasi-normality
(Andreani et al. \cite{RCPLD}, Bertsekas and Ozdaglar \cite{BertsekasJOTA2002}, Minchenko and Stakhovski \cite{Minchenko-stakhovski}).
All the CQs mentioned above are stronger than the error bound property/metric subregularity/calmness; see, e.g., Ye and Zhou \cite[Theorem 7.4]{Ye-Zhou}.


Since the parametric nonlinear program  (\ref{nlp}) involves a parameter and so the constraint qualifications is dependent on the parameter. We now review  the parametric version of the CQs mentioned above.
\begin{defi}[LICQ, MFCQ, CRCQ, RCRCQ, RCPLD,  the quasi-normality]\label{defi}
\qquad

\noindent {\rm (i)} We say that LICQ holds at $\bar x \in  \cX(\bar p)$  if the family of partial gradient vectors
\begin{equation}\{\nabla_x g_i(\bar p,\bx): i\in\cI_g(\bar p,\bx)\} \cup \{\nabla_x h_j(\bar p,\bx): j=1,\dots, q\},\label{family}
\end{equation} where $\cI_g(\bar p,\bx):=\{i: g_i(\bar p,\bx)=0\}$, is linearly independent.

\noindent {\rm (ii)} We say that   MFCQ holds at $\bar x \in  \cX(\bar p)$  if the family of vectors (\ref{family})  is positively linearly independent. That is,
\begin{eqnarray*}
&&  \nabla_x g(\bar p,\bar x)\lambda^g+\nabla_x h(\bar p,\bar x)\lambda^h=0, \ 0\leq \lambda^g \perp - g(\bar p,\bar x)\geq 0  \Longrightarrow (\lambda^g,\lambda^h)=0.
\end{eqnarray*}

\noindent {\rm (iii)} We say that  CRCQ holds at $\bar x \in  \cX(\bar p)$ if there is $\delta>0$ such that for all index sets $\cI_1\subseteq \cI_g(\bar p,\bar x)$ and $\cI_2\subseteq \{1,\ldots,q\}$, the family of partial gradient vectors $$\{\nabla_x g_i(p,x), \nabla_x h_j(p,x):i\in \cI_1, j\in \cI_2\}$$ has the same rank for all $(p, x)\in \cB_\delta(\bar p,\bar x)$.

\noindent {\rm (iv)} We say that RCRCQ holds at $\bar x \in  \cX(\bar p)$ if there is $\delta>0$ such that for all $\cI\subseteq \cI_g(\bar p,\bx)$, the family of partial gradient vectors $\{\nabla_x g_i(p,x), \nabla_x h_j(p,x):i\in \cI, j=1,\ldots,q\}$ has the same rank for all $(p,x)\in \cB_\delta(\bar p,\bx)$.

\noindent {\rm (v)} Let ${\cal J}\subseteq\{1,\ldots,q\}$ be such that $\{\nabla_x h_j(\bar p,\bar x):j\in{\cal J}\}$ is a basis for ${\rm
span}\,\{\nabla_x h_j(\bar p,\bar x): j=1,\ldots,q\}$. We say that  RCPLD holds at $\bar x \in  \cX(\bar p)$ if there exists
$\delta>0$ such that
\begin{itemize}
\item $\{\nabla_x h_j( p,x): j=1,\ldots,q\}$ has the same rank for each $(p,x)\in {\cal
B}_{\delta}(\bar p,\bx)$;
\item for each ${\cal I}\subseteq {\cI}_g(\bar p,\bx)$, if $\{\nabla_x g_i(\bar p,\bx): i\in{\cal
I}\}\cup \{\nabla_x h_j(\bar p,\bx): j\in{\cal J}\}$ is positively linearly dependent, then $\{\nabla_x
g_i(p,x), \nabla_x h_j(p,x): i\in {\cal I}, j\in {\cal J}\}$ is linearly dependent
for all $(p,x)\in{\cal B}_{\delta}(\bar  p,\bx)$.

\end{itemize}

\noindent {\rm (vi)} We say that the  quasi-normality holds at $\bar x \in  \cX(\bar p)$ if  there are no nonzero vectors $(\lambda,\mu)$ such that
\begin{itemize}
\item $ \nabla_x g(\bar p,\bx)\lambda  +  \nabla_x h(\bar p,\bx)\mu =0,\
0\le \lambda \ \bot \ -g(\bar p,\bx) \ge0$;
\item there exists a sequence  $\{(p^k, x^k)\}$ converging to $(\bar p,\bx)$  such that for all $k$,
\[g_i(p^k,x^k)>0\ {\rm if}\ \lambda_i>0,\ \mu_ih_i(p^k,x^k)>0\ {\rm if}\ \mu_i\neq0.\]
\end{itemize}

\end{defi}

\begin{defi}[Robinson stability](Gfrerer and Mordukhovich \cite[Definition 1.1]{HM})\label{rss}
We say that Robinson stability ${\rm (RS)}$  holds for the parametric feasible map  $\cX(p)$  at $(\bar p,\bar x)\in {\rm gph } \cX$  if there exist positive scalars $\kappa, \delta$ such that
\begin{equation*}
{\rm dist}(x, {\cX}(p))\leq \kappa \left[\sum_{i=1}^p \max\{g_i(p,x),0\} + \sum_{i=1}^q |h_i(p,x)|\right],  \quad \forall
(p,x) \in  \cB_\delta(\bar p,\bar x).\end{equation*}
\end{defi}
If all dependencies on the parameter $p$ are omitted (i.e.,  $g(p,x)$ and $h(p,x)$ are both independent of $p$) in the above inequality, then the Robinson stability becomes the local error bound property. Hence RS means that the error bound property holds with the same constant $\kappa$ for all parameters sufficiently close to $\bar{p}$.
Although the term RS was first given in Gfrerer and Mordukhovich \cite{HM}, this property was first studied by Robinson \cite{robinson76}. Other terminologies have been given to this property in the literature,  for example, in Minchenko and Stakhovski \cite{Minchenko-stakhovski} and some other publications, this property is referred to as R-regularity. For the case where  the solution map ${\cal S}(x)$ satisfies  RS, the property was called the uniform weak sharp minimum and the uniform parametric error bound in Ye and Zhu \cite{Ye-Zhu}
 and Ye et al. \cite{Ye-zhu-zhu} respectively.

 For the parametric nonlinear program  (\ref{nlp}),   CRCQ,  RCRCQ,   RCPLD and the quasi-normality are all weaker than  MFCQ which is weaker than  LICQ.  By adapting the proof of
Andreani et al. \cite[Theorem 1]{RCPLD} in the non-parametric case to the parametric case, it is easy to verify that   RCRCQ  implies   RCPLD for the parametric program.  Moreover RCPLD and the inner semi-continuity of $\cX(p)$ can imply  RS  (Mehlitz and Monchenko\cite[Theorem 3.3]{R-regularity}).

Moreover, some CQs such as LICQ are locally persistent in the sense that if they hold at a given point, then they will hold at all feasible point near such a point. But some CQs are not locally persistent such as Guignard's CQ.  For the parametric nonlinear program (\ref{nlp}), {we say that a CQ is locally persistent at $\bar{x}\in \cX(\bar{p})$ if it holds at any $x\in \cX(p)$ with $(x,p)$ in some neighborhood of $(\bx,\bar p)$.}


In the following, we introduce a unifying class of  CQs for the parametric nonlinear program by extending the multiplier stability in Guo et al.  \cite[Definition 3.3]{guo-stability} and combining it with the local persistence. For convenience of representation, for given points $\beta,x\in \Re^n$ and $p\in \Re^m$, we define the multiplier set
\begin{equation}\label{stand multi}
\cM(\beta,p, x):=\left\{(\lambda,\mu):\begin{array}{l}
\beta - \nabla_x g(p,x)\lambda - \nabla_x h(p,x) \mu =0 \\ [5pt]
0\le \lambda\ \bot -g(p,x)\ge0, \ h(p,x)=0
\end{array}\right\}.
\end{equation}
Note that when $\beta=\nabla_x f(p,x)$,  the multiplier set is the set of all KKT multipliers for the parametric optimization problem (\ref{nlp}) at $x\in \cX(p)$.

\begin{defi}[Stable parametric CQ]\label{defi1}
We say that a condition imposed on the constraint region of the parametric optimization problem (\ref{nlp}) is a stable parametric CQ at $\bar{x}\in \cX(\bar{p})$ if it is locally persistent at  $\bar{x}\in \cX(\bar{p})$; and
\begin{itemize}
\item[\rm (i)] if $\bar{x}\in \cX(\bar{p})$ is a local minimizer of problem (${\cal P}_{\bar{p}}$), then ${\cM}(\nabla_x f(\bar{p},\bar{x}), \bar{p},\bar{x})\neq \emptyset$;

\item[\rm (ii)]
for any given vector sequences $\{\beta^k\}_{k=1}^\infty\subseteq \Re^n$, $\{x^k\}_{k=1}^\infty\subseteq \Re^n$, and $\{p^k\}_{k=1}^\infty\subseteq \Re^m$ satisfying $\beta^k\to \bar{\beta}$, $x^k\to \bar{x}$ and $p^k\to \bar{p}$ as $k\to\infty$ and ${\cM}(\beta^k,p^k,x^k)\neq \emptyset$ for all $k$, there exists a multiplier sequence $\{(\lambda^k,\mu^k)\in {\cM}(\beta^k,p^k,x^k)\}_{k=1}^\infty$  converging to some $(\bar \lambda,\bar \mu)$ on a subsequence such that $(\bar \lambda,\bar \mu)\in {\cM}(\bar{\beta},\bar{p},\bar{x})$.
\end{itemize}
\end{defi}

It is easy to see that LICQ and MFCQ as defined in  Definitions \ref{defi} are both stable parametric CQs. The following result shows that all  conditions in Definitions \ref{defi} and \ref{rss} are stable parametric CQs in the sense of Definition \ref{defi1}.

\begin{prop}
Let $\bar{x}\in \cX(\bar{p})$.  Then   CRCQ, RCRCQ, RCPLD,  the quasi-normality and RS are all stable parametric CQs at $\bar{x}\in \cX(\bar{p})$.
\end{prop}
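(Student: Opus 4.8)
The plan is to check, for each of the five conditions, the three requirements in Definition~\ref{defi1}: local persistence at $\bar{x}\in\cX(\bar{p})$, the KKT property~(i) at a locally optimal solution of $(\cP_{\bar{p}})$, and the bounded-outer-semicontinuity property~(ii) of the multiplier map $\cM(\cdot,\cdot,\cdot)$. Requirement~(i) is just the classical fact that each of CRCQ, RCRCQ, RCPLD, the quasi-normality and RS is a constraint qualification for the nonlinear program $(\cP_{\bar{p}})$: all of them are stronger than the metric subregularity / calmness condition (for RS this is seen by freezing $p=\bar p$), which already guarantees the KKT conditions, so $\cM(\nabla_x f(\bar p,\bar x),\bar p,\bar x)\neq\emptyset$ at a local solution. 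For local persistence, CRCQ, RCRCQ and RCPLD are by their definitions uniform rank / positive-linear-dependence statements imposed on a whole ball $\cB_\delta(\bar p,\bar x)$; if $(p',x')\in\gph\cX$ lies in $\cB_{\delta/2}(\bar p,\bar x)$ then $\cB_{\delta/2}(p',x')\subseteq\cB_\delta(\bar p,\bar x)$ and, by continuity of $g$, $\cI_g(p',x')\subseteq\cI_g(\bar p,\bar x)$, so the defining statements still hold on $\cB_{\delta/2}(p',x')$ and the CQ persists at $(p',x')$; the same ``shrink the ball'' argument applies verbatim to RS. Persistence of the quasi-normality is the one genuinely delicate point: I would argue by contradiction, supposing feasible pairs $(p^k,x^k)\to(\bar p,\bar x)$ at which quasi-normality fails, extracting for each $k$ a nonzero multiplier with its ``bad'' approximating sequence, normalizing, passing to the limit, and diagonalizing to build a single bad sequence converging to $(\bar p,\bar x)$ that contradicts quasi-normality there --- the parametric analogue of the persistence argument of Bertsekas and Ozdaglar~\cite{BertsekasJOTA2002}; see also the stability analysis of Guo et al.~\cite{guo-stability}.

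For requirement~(ii) the common engine for the constant-rank family (CRCQ, RCRCQ, RCPLD) is a Carath\'{e}odory-type reduction combined with the uniform rank property. Given $(\lambda^k,\mu^k)\in\cM(\beta^k,p^k,x^k)$, one thins out the active gradients so that the support of $(\lambda^k,\mu^k)$ indexes a linearly independent family; after passing to a subsequence this index subset may be taken independent of $k$, and on it the reduced multiplier is determined by a Cramer-type formula whose denominator stays bounded away from zero because the rank of the relevant gradient family is constant on $\cB_\delta(\bar p,\bar x)$; hence $\{(\lambda^k,\mu^k)\}$ is bounded and every limit point lies in $\cM(\bar\beta,\bar p,\bar x)$ by continuity of the data. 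For the quasi-normality, requirement~(ii) follows from a Fritz--John normalization argument: were $\|(\lambda^k,\mu^k)\|\to\infty$, normalizing would yield a nonzero limit $(\lambda,\mu)$ on which the complementarity and sign relations survive, while a suitably extracted subsequence of $(p^k,x^k)$ supplies the bad approximating sequence exhibiting failure of quasi-normality at $(\bar p,\bar x)$, a contradiction; so $\{(\lambda^k,\mu^k)\}$ is bounded and a convergent subsequence has its limit in $\cM(\bar\beta,\bar p,\bar x)$. For RS, I would use that RS is a parametric error bound and hence, via the Mordukhovich criterion and the estimates of Gfrerer and Mordukhovich~\cite{HM}, forces the multiplier map $\cM(\beta,p,x)$ to be locally bounded and outer semicontinuous near $(\bar\beta,\bar p,\bar x)$, which is precisely~(ii).

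I expect the main obstacle to be requirement~(ii) for the constant-rank family: making the Carath\'{e}odory reduction uniform in $k$ while the base point $(p^k,x^k)$ moves, so that the constant rank is exploited at the perturbed points and not merely at $(\bar p,\bar x)$, the selected index subset stabilizes along a subsequence, and the reduced multipliers stay bounded. A secondary technical point is persistence of the quasi-normality, where one must verify that the normalized limit multiplier, paired with a diagonally extracted subsequence of $(p^k,x^k)$, genuinely violates quasi-normality at the base point. By contrast, the RS case is comparatively soft once the boundedness and outer semicontinuity consequences of the parametric error bound are quoted.
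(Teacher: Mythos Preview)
Your outline is largely on target and matches the paper's proof for local persistence, for property~(i), for the constant-rank family in property~(ii) (the Carath\'eodory reduction is exactly the engine behind Andreani et al.~\cite[Theorem~2]{RCPLD}), and for RS in property~(ii) (the paper also invokes the bounded multiplier property from Gfrerer and Mordukhovich~\cite{HM}).

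There is, however, a genuine gap in your argument for property~(ii) under quasi-normality. You take arbitrary $(\lambda^k,\mu^k)\in\cM(\beta^k,p^k,x^k)$, normalize, pass to a nonzero limit $(\lambda,\mu)$, and then assert that ``a suitably extracted subsequence of $(p^k,x^k)$ supplies the bad approximating sequence'' violating quasi-normality at $(\bar p,\bar x)$. But the points $(p^k,x^k)$ are feasible for $\cX(p^k)$: complementarity forces $g_i(p^k,x^k)=0$ whenever $\lambda^k_i>0$, and $h_j(p^k,x^k)=0$ always. Hence no subsequence of $(p^k,x^k)$ can satisfy the strict sign conditions $g_i>0$ and $\mu_j h_j>0$ required in Definition~\ref{defi}(vi), and no contradiction with quasi-normality is obtained. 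The normalization step alone is not enough.

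The paper closes this gap with an additional idea you are missing: before normalizing, it replaces each $(\lambda^k,\mu^k)$ by an \emph{enhanced} KKT multiplier $(\tilde\lambda^k,\tilde\mu^k)\in\cM^e(\beta^k,p^k,x^k)\subseteq\cM(\beta^k,p^k,x^k)$, whose existence is guaranteed by Giorgi et al.~\cite[Theorem~4.6]{Giorgi}. Enhanced multipliers come equipped, by definition~(\ref{stand multi-new}), with their own approximating sequences $(p^{k,l},x^{k,l})\to(p^k,x^k)$ on which the strict sign conditions hold. One then normalizes $(\tilde\lambda^k,\tilde\mu^k)$, passes to a limit, and diagonalizes over the double index $(k,l)$ to produce the bad sequence converging to $(\bar p,\bar x)$; this is the technique of Ye and Zhang~\cite{jinjane}. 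Note that Definition~\ref{defi1}(ii) only asks for \emph{some} bounded multiplier sequence, so switching from the given $(\lambda^k,\mu^k)$ to the enhanced $(\tilde\lambda^k,\tilde\mu^k)$ is legitimate.
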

\begin{proof}
The local persistence of CRCQ, RCRCQ and RS follows from their definitions immediately. The local persistence of RCPLD and the quasi-normality respectively follows from adapting the proof of Andreani et  al  \cite[Theorem 4]{RCPLD} and Ozdaglar and Bertsekas \cite[Lemma 2]{OzBer} in the non-parametric case to the parametric case.

It follows from  Andreani et al. \cite[Corollay 1]{RCPLD} that  RCPLD is a CQ. Then the stronger  CRCQ and RCRCQ are also CQs.
By Ozdaglar and Bertsekas \cite[Proposition 1]{OzBer}, the  quasi-normality is a CQ.  The RS clearly implies the local error bound for the system $\{x: g(\bar{p},x)\le0, h(\bar{p},x)=0\}$. Thus, the Clarke calmness holds for problem \eqref{nlp} with $p=\bar{p}$.  Hence  RS is a CQ (Clarke \cite[Proposition 6.4.4]{Clarke}).

Using the proof technique of Theorem 2 of Andreani et al. \cite{RCPLD}, we can show that   Definition 2.3(ii) holds  if  RCPLD holds. Thus, the stronger CRCQ and RCRCQ also have this property.  We next show that  quasi-normality implies   Definition 2.3(ii).
For given points $\beta,x\in \Re^n$ and $p\in \Re^m$, let
\begin{equation}\label{stand multi-new}
\cM^e(\beta,p, x):=\left\{(\lambda,\mu):\begin{array}{l}
\beta - \nabla_x g(p,x)\lambda - \nabla_x h(p,x) \mu =0\\ [4pt]
 0\le \lambda \ \bot \ -g(p,x)\ge 0,\ h(p,x)=0 \\[4pt]
 {\rm there\ exists\ a\ sequence}\  \{(p^l, x^l)\}\ {\rm converging\ to}\ (p,x) \\ [4pt]
  {\rm such\ that\ for\ all}\ l,\\[4pt]
g_i(p^l,x^l)>0\ {\rm if}\ \lambda_i>0,\ \mu_ih_i(p^l,x^l)>0\ {\rm if}\ \mu_i\neq0
\end{array}\right\}
\end{equation}
be the enhanced multiplier set. When $\beta=\nabla_x f(p,x)$,  the enhanced  multiplier set is the set of all enhanced KKT multipliers for the parametric optimization problem (\ref{nlp}) at $x\in \cX(p)$. Let $\beta^k\to \bar{\beta}$, $x^k\to \bar{x}$ and $p^k\to \bar{p}$ as $k\to\infty$, and ${\cM}(\beta^k,p^k,x^k)\neq \emptyset$ for all $k$. By Giorgi et al.  \cite[Theorem 4.6]{Giorgi},  any KKT point is an enhanced KKT point which means that for each $(\lambda^k, \mu^k)\in {\cM}(\beta^k,p^k,x^k)$,
there exists $(\tilde \lambda^k, \tilde \mu^k)\in {\cM}^e(\beta^k,p^k,x^k)\subseteq {\cM}(\beta^k,p^k,x^k)$.
Then using the proof technique of Ye and Zhang \cite[Theorem 3]{jinjane}, we can show that  the sequence $\{ (\tilde \lambda^k, \tilde \mu^k)\}$  is bounded if the quasi-normality holds at $\bar x \in \cX(\bar{p})$.  Taking a subsequence we obtain $(\bar \lambda,\bar \mu)$  such that $(\bar \lambda,\bar \mu)\in {\cM}(\bar{\beta},\bar{p},\bar{x})$.
Finally we show that RS implies condition (ii) in Definition 2.3.  It follows from Gfrerer and Mordukhovich \cite[Theorem 3]{HM} that if RS holds at $(\bar{p},\bar{x})$, then a so-called bounded multiplier property holds at $(\bar{p},\bar{x})$, which means that there exists $\kappa>0$ such that for all sufficiently large $k$,
\[
{\cM}(\beta^k,p^k,x^k) \cap \kappa \|\beta^k\|\mathbb{B} \not= \emptyset.
\]
Let $(\lambda^k,\mu^k)\in {\cM}(\beta^k,p^k,x^k) \cap \kappa \|\beta^k\|\mathbb{B} $. Since $\beta^k\to \bar{\beta}$ as $k\to\infty$, it follows that $\{\beta^k\}$ is bounded. Thus, we have that $\{(\lambda^k,\mu^k)\}$ is bounded and it has a convergent subsequence. Since $(\lambda^k,\mu^k)\in {\cM}(\beta^k,p^k,x^k)$, it follows that
\begin{eqnarray*}
&& \beta^k - \nabla_x g(p^k,x^k)\lambda^k- \nabla_x h(p^k,x^k) \mu^k =0, \\ [4pt]
&& h(p^k,x^k)=0,\ 0\le \lambda^k\ \bot -g(p^k,x^k)\ge0.
\end{eqnarray*}
Taking limits on the subsequence on which $(\lambda^k,\mu^k)$ is convergent to $(\bar\lambda,\bar \mu)$, it follows that $(\bar \lambda,\bar \mu)\in {\cM}(\bar{\beta},\bar{p},\bar{x})$. The proof is complete.
\end{proof}

In the paper, we  also need to use the following concepts of CQs for nonparametric nonlinear programs.
\begin{defi}[Stable CQ] \label{defi2}
Let $\bar  x\in \cX:=\{x: g(x)\leq 0, h(x)=0\}$. We say that a stable CQ holds at $\bar x$ if all conditions in Definition \ref{defi1} holds with all dependencies on the parameter $p$ omitted.
\end{defi}
It should be noted that most  CQs for nonparametric nonlinear programs in the literature, namely, LICQ, MFCQ, CRCQ, RCRCQ, RCPLD, and the quasi-normality, are all stable CQs in the sense of Definition \ref{defi2}.

\section{The subdifferential of the maximal value function}\label{sec:sub}

In this section, we investigate upper estimates for the Fr\'{e}chet, limiting, and horizon subdifferentials of the maximal value function $V(x)$ with a single optimal solution or the union of all optimal solutions. We divide our analysis into two cases: the one when $V(x)$ is locally Lipschitz continuous and the one when $V(x)$ may not be locally Lipschitz continuous. For the first case, we will utilize the inner semi-continuity of the optimal solution map which is a relatively strong condition; whilst for the second case, the Wolfe dual will be employed.

For a given scalar $r\in \{0,1\}$, the generalized Lagrangian function for the maximization problem $(P_\bx)$ is defined as
$\cL^r(\bx,y,\lambda):=r f(\bx,y) - g(\bx,y)^\top\lambda.$
For the sake of simplicity, when $r=1$ we omit the superscript $r$. We denote the set of generalized multipliers for problem $(P_\bx)$ at $y\in \cS(\bx)$ as
\begin{equation*}\label{lagset}
\Sigma^r(\bar x, y):=\left \{ \lambda : \nabla_y \cL^r(\bx,y,{\lambda})=0,\ 0\le - g(\bx,y)\ \bot\ {\lambda} \ge0
\right \},
\end{equation*} and we omit the superscipt $r$ when $r=1$.

\subsection{The case when $V$ is locally Lipschitz continuous}\label{subsec}

As discussed in the introduction, when the maximal value function  is locally Lipschitz continuous, by \eqref{relation} the subdifferential of the maximal value function can be estimated as
\begin{equation}\label{estim}
\partial V(\bx) \subseteq \partial^c  V(\bx) \subseteq -\co \partial v(\bx).
\end{equation}
In general, the upper estimate for $\partial v(\bar x)$ takes the following form (e.g., Lucet and Ye \cite[Theorem 4.4]{Lucet-Ye2001}):
\begin{eqnarray}
\partial v(\bar x) &\subseteq &\bigcup_{y\in {\cal S}(\bar x)} \{-\nabla_x \cL(\bx,y,{\lambda}): {\lambda} \in \Sigma(\bar x, y)\}.\label{upper-estimate}
\end{eqnarray}
Hence by (\ref{estim})-(\ref{upper-estimate}), provided that $V$ is Lipschitz continuous at $\bx$, we have
\begin{equation}\label{estimnew}
\partial V(\bx) \subseteq \co \bigcup_{y\in {\cal S}(\bar x)} \{\nabla_x \cL(\bx,y,{\lambda}): {\lambda} \in \Sigma(\bar x, y)\}.
\end{equation}
In this subsection, we investigate sufficient conditions for ensuring that the upper estimate (\ref{upper-estimate}) can take the following simpler form
\begin{equation}\label{inclusion}
\partial v(\bar x) \subseteq \{-\nabla_x \cL(\bx,\by,{\lambda}): {\lambda} \in \Sigma(\bar x,\bar y)\},
\end{equation}
for a given $\bar y\in {\cal S}(\bar x)$.
Then due to the linearity of the function $\nabla_x\cL(x,y,{\lambda})$ with respect to $\lambda$ and the convexity of the multiplier set $\Sigma(\bar x,\bar y)$, the upper estimate in (\ref{inclusion})  is convex and hence the convex hull operation in (\ref{estimnew}) is superfluous.

The first result of this subsection depends on the inner semi-continuity which is a set-valued generalization for the continuity of a single-valued map.

\begin{defi}[Inner semi-continuity](Mordukhovich \cite[Definition 1.63]{mor-book}) Given $\bar y\in {\cal S}(\bar x)$, we say that the solution map ${\cal S}(x)$ is inner semi-continuous at $(\bar x,\bar y)$, provided that  for any sequence $ x^k \rightarrow \bar x$, there exists a sequence $y^k\in {\cal S}(x^k)$ converging to $\bar y$.
\end{defi}
From this definition, it is easy to see that if the solution map ${\cal S}(x)$ is lower semi-continuous at $\bar x$, then it is inner semi-continuous at $(\bar x,\bar y)$ for each $\bar y\in {\cal S}(\bar x)$. Moreover, by Mehlitz and Minchenko \cite[Lemma 2.2]{R-regularity}, the inner semi-continuity of ${\cal S}(x)$ at $(\bar x,\bar y)$ is guaranteed by the existence of  a uniform weak sharp minimum around  $(\bar x,\bar y)$ (see \cite{Ye-Zhu,Ye1998} for the definition and sufficient conditions).

We now show that under the inner semi-continuity, the upper estimate of $\partial V(\bar x)$ can be obtained by using any given solution $\bar y\in {\cal S}(\bx).$
\begin{thm}\label{thm4.1}
Let $\bar y\in {\cal S}(\bar x)$. Suppose that the solution map ${\cal S}(x)$ is inner semi-continuous at $(\bar x,\bar y)$. Assume that the local error bound holds for the system
$g(x,y)\leq 0$ at $(\bar x, \bar y)$.
Then $V(x)=-v(x)$ is continuous at $\bar x$ and
\begin{eqnarray}
\partial v(\bar x) &\subseteq& \{-\nabla_x \cL(\bx,\by,{\lambda}): {\lambda} \in \Sigma(\bar x,\bar y)\}, \label{eqn1}\\
\partial^\infty  v(\bar x) &\subseteq& \{\nabla_x g(\bx,\by) {\lambda}: {\lambda} \in \Sigma^0(\bar x,\bar y)\}.\label{eqn2}
\end{eqnarray}
Assume further that
\begin{equation}\label{sigular1}
\{\nabla_x g(\bx,\by) {\lambda}:  {\lambda} \in \Sigma^0(\bar x,\bar y)\}=\{0\}.
\end{equation}
Then $V$ is Lipschitz continuous at $\bx$ and ${\partial  V(\bx) \subseteq \partial^c  V(\bx)} \subseteq \left \{\nabla_x \cL(\bx,\by,{\lambda}): {\lambda} \in \Sigma(\bar x,\bar y) \right \}$.
\end{thm}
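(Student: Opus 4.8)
The plan is to derive the estimates for $\partial v(\bx)$ and $\partial^\infty v(\bx)$ directly, exploiting the inner semi-continuity of $\cS(x)$ to pin all the analysis to the single solution $\bar y$, and then to transfer the conclusion to $V = -v$ once we verify Lipschitz continuity. First I would establish continuity of $v$ at $\bx$: lower semi-continuity of $v$ follows from the standard argument (any parametric value function of a maximization problem with continuous data is lsc under an inner semicontinuity/feasibility hypothesis), while upper semi-continuity of $-v = V$, equivalently $\liminf_{x\to\bx} v(x)\ge v(\bx)$ is exactly where inner semi-continuity is used: given $x^k\to\bx$, pick $y^k\in\cS(x^k)$ with $y^k\to\bar y$; then $-v(x^k)=V(x^k)=f(x^k,y^k)\to f(\bx,\bar y)=V(\bx)$ by continuity of $f$, which gives the reverse inequality and hence continuity.

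Next, for the subdifferential inclusion \eqref{eqn1}, I would take $\xi\in\partial v(\bx)$ and use the definition: there are $x^k\to_v\bx$ and $\xi^k\in\widehat\partial v(x^k)$ with $\xi^k\to\xi$. The key device is to represent $v(x) = \inf\{-f(x,y): g(x,y)\le 0\}$ and use a known upper estimate of the Fr\'echet subdifferential of such a minimal value function at each $x^k$ — but since Fr\'echet subgradients need not come with multipliers at an arbitrary point, the cleaner route is to use the limiting estimate \eqref{upper-estimate} together with inner semi-continuity to localize. More precisely, applying inner semi-continuity at $(\bx,\bar y)$ we get $y^k\in\cS(x^k)$, $y^k\to\bar y$; the local error bound for $g(x,y)\le 0$ at $(\bx,\bar y)$ (equivalently, calmness of the feasible map, hence metric subregularity of the constraint) guarantees that the multiplier set $\Sigma(x^k,y^k)$ is nonempty and, crucially, that the multiplier sequences stay bounded near $(\bx,\bar y)$; then for $\xi^k\in\widehat\partial v(x^k)\subseteq\partial v(x^k)$ one writes $\xi^k = -\nabla_x\cL(x^k,y^k,\lambda^k)$ with $\lambda^k\in\Sigma(x^k,y^k)$, extracts a convergent subsequence $\lambda^k\to\lambda$, and passes to the limit using continuity of $\nabla_x\cL$ and the complementarity relations to obtain $\lambda\in\Sigma(\bx,\bar y)$ and $\xi=-\nabla_x\cL(\bx,\bar y,\lambda)$. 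The horizon estimate \eqref{eqn2} is obtained by the same scheme applied to $\partial^\infty v(\bx)$: take $t_k\downarrow 0$, $\xi^k\in\widehat\partial v(x^k)$ with $t_k\xi^k\to\xi$; scaling the multipliers by $t_k$ and using boundedness from the error bound, the $rf$ term disappears in the limit (i.e. the limiting multiplier lies in $\Sigma^0(\bx,\bar y)$), giving $\xi = \nabla_x g(\bx,\bar y)\lambda$ with $\lambda\in\Sigma^0(\bx,\bar y)$.

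For the final assertion, condition \eqref{sigular1} forces $\partial^\infty v(\bx)\subseteq\{0\}$ by \eqref{eqn2}; since $v$ is (by the first part) continuous, hence lsc, at $\bx$, the criterion $\partial^\infty v(\bx)=\{0\}$ (Rockafellar--Wets, Theorem 9.13, quoted in the preliminaries) yields that $v$, and therefore $V=-v$, is Lipschitz continuous near $\bx$. Then the homogeneity of the Clarke subdifferential under negation for locally Lipschitz functions gives $\partial V(\bx)\subseteq\partial^c V(\bx) = -\partial^c v(\bx) = -\co\,\partial v(\bx)$, and plugging \eqref{eqn1} in, $-\co\,\partial v(\bx)\subseteq\co\{\nabla_x\cL(\bx,\bar y,\lambda):\lambda\in\Sigma(\bx,\bar y)\}$. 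Finally I would observe that the set on the right is already convex: $\Sigma(\bx,\bar y)$ is a convex polyhedron (it is defined by linear equalities/inequalities in $\lambda$) and $\lambda\mapsto\nabla_x\cL(\bx,\bar y,\lambda)=\nabla_x f(\bx,\bar y)-\nabla_x g(\bx,\bar y)\lambda$ is affine, so its image is convex and the $\co$ is superfluous, giving the stated inclusion.

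The main obstacle I anticipate is the boundedness and limiting behaviour of the multiplier sequences $\lambda^k\in\Sigma(x^k,y^k)$ as $(x^k,y^k)\to(\bx,\bar y)$: one needs that the local error bound for $g(\cdot,\cdot)\le 0$ at $(\bx,\bar y)$ (i.e. calmness of the set-valued feasibility map, or metric subregularity of $y\mapsto g(x,y)-\Re^q_-$) is \emph{inherited uniformly} in a neighbourhood, so that the associated multipliers are uniformly bounded — this is precisely the kind of ``stable CQ''/Robinson-stability phenomenon discussed in Section~2 and is where the technical weight of the proof sits. A secondary subtlety is justifying that an arbitrary Fr\'echet subgradient $\xi^k\in\widehat\partial v(x^k)$ admits the multiplier representation \eqref{upper-estimate}; this should follow from applying the known limiting-subdifferential estimate (Lucet--Ye) at $x^k$ together with the just-mentioned uniform error bound, but it requires care to make sure the chosen solution is $y^k$ rather than some other point of $\cS(x^k)$, which is exactly what inner semi-continuity buys us.
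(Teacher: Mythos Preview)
Your sequential strategy differs from the paper's, which is much shorter: writing $v(x)=\inf_y\theta(x,y)$ with $\theta(x,y)=-f(x,y)+I_{\gph\cF}(x,y)$, the paper applies Mordukhovich's marginal-function theorem \cite[Theorem~1.108]{mor-book} directly to obtain, under inner semi-continuity of $\cS$ at $(\bx,\by)$,
\[
\partial v(\bx)\subseteq\{x^*:(x^*,0)\in-\nabla f(\bx,\by)+\cN_{\gph\cF}(\bx,\by)\},\qquad
\partial^\infty v(\bx)\subseteq\{x^*:(x^*,0)\in\cN_{\gph\cF}(\bx,\by)\},
\]
and then reads off \eqref{eqn1}--\eqref{eqn2} from the multiplier description of $\cN_{\gph\cF}(\bx,\by)$ furnished by the local error bound (Henrion--Jourani--Outrata \cite{HJO}). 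No sequences of multipliers are taken; everything happens at the single point $(\bx,\by)$. The final paragraph (Lipschitzness via $\partial^\infty v(\bx)=\{0\}$, then $\partial V\subseteq-\co\partial v$ and convexity of the image of $\Sigma(\bx,\by)$ under the affine map $\lambda\mapsto\nabla_x\cL(\bx,\by,\lambda)$) matches what you wrote.

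Your route can be completed, but the resolution you propose for your second obstacle is wrong. Applying the Lucet--Ye estimate \eqref{upper-estimate} at $x^k$ only yields $\xi^k=-\nabla_x\cL(x^k,\tilde y^k,\lambda^k)$ for \emph{some} $\tilde y^k\in\cS(x^k)$, and inner semi-continuity at $(\bx,\by)$ does \emph{not} let you take $\tilde y^k=y^k$: it guarantees the existence of one sequence in $\cS(x^k)$ converging to $\by$, not that the particular solution delivering the multiplier representation of $\xi^k$ does. The correct argument bypasses \eqref{upper-estimate} entirely: for \emph{any} chosen $y^k\in\cS(x^k)$, the inequality $v(x)\le -f(x,y)$ for $(x,y)\in\gph\cF$, with equality at $(x^k,y^k)$, shows straight from the definitions that $\xi^k\in\widehat\partial v(x^k)$ implies $(\xi^k,0)\in\widehat\partial\theta(x^k,y^k)=-\nabla f(x^k,y^k)+\widehat\cN_{\gph\cF}(x^k,y^k)$. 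Then the local error bound at $(\bx,\by)$---which is by definition an inequality valid on a full neighbourhood, so your first obstacle dissolves without invoking Robinson stability---gives multipliers $\lambda^k$ with $\|\lambda^k\|\le\kappa\|(\xi^k,0)+\nabla f(x^k,y^k)\|$, hence bounded, and passing to the limit yields \eqref{eqn1}--\eqref{eqn2}. With this fix your argument goes through, though it amounts to re-deriving Mordukhovich's theorem by hand in this special case.
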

\begin{proof} 
Under the inner semi-continuity at $(\bx, \by)$, it is straightforward to verify that the  minimal value function $v(x)$ is continuous at $\bx$. Let $\theta(x,y) := -f(x,y) + I_{\gph\cF}(x,y)$ where $I_{\gph\cF}$ is the indicator function. Then $v
(x)=\inf_y \theta(x,y)$.
Since $\cS(x)$ is inner semicontinuous at $(\bx,\by)$, by Mordukhovich \cite[Theorem 1.108]{mor-book} and the sum rule of limiting subdifferential, we have
\begin{eqnarray*}
&& \partial v(\bar x)\subseteq \left \{x^* : (x^*,0)\in -\nabla f(\bx,\by) + \cN_{\gph\cF}(\bx,\by) \right \},\\
&& \partial^\infty v(\bar x) \subseteq \{x^* : (x^*,0)\in \cN_{\gph\cF}(\bx,\by)\}.
\end{eqnarray*}
Therefore, under the local error bound condition for the system $g(x,y)\leq 0$ at $(\bx,\by)$, by the explicit expression for $\cN_{\gph\cF}(\bx,\by)$ (e.g., Henrion et al. \cite[Theorem 4.1]{HJO} or Gfrerer and Ye \cite[Proposition 4]{HY2018})  the desired inclusions (\ref{eqn1})-(\ref{eqn2}) follow immediately.

Since (\ref{eqn2}) and (\ref{sigular1}) imply $\partial^\infty  v(\bar x)=\{0\}$,  it follows that $v(x)$ (and $V(x)$ as well)  is Lipschitz continuous
at $\bar x$ under condition (\ref{sigular1}) (Rockafellar and Wets \cite[Theorem 9.13]{Rock98}).
Thus  $$\partial V(\bar x) \subseteq \partial^c V(\bar x) \subseteq -\co \partial v(\bar x).$$
This together with condition \eqref{eqn1} implies the desired result.
\end{proof}

Recall that
 MFCQ holds at $(\bx, \by)$ for the system $g(x,y)\leq 0$ at $(\bx, \by)$ if and only if
\begin{equation}\nabla g(\bx,\by){\lambda}=0 , \quad 0\le -g(\bx,\by)\ \bot\ {\lambda}\ge0 \Longrightarrow {\lambda}=0,\label{full} \end{equation}
while  MFCQ holds at $\by$  for the system $g(\bar x,y)\leq 0$ if and only if
\begin{equation} \nabla_y g(\bx,\by){\lambda}=0 , \quad 0\le -g(\bx,\by)\ \bot\ {\lambda}\ge0 \Longrightarrow {\lambda}=0.\label{partial} \end{equation}
The implication relation (\ref{partial}) is clearly stronger than the relation (\ref{full}). Hence under condition  (\ref{partial})   the local error bound holds for $g(x,y)\leq 0$ at $(\bar x, \bar y)$ and moreover, (\ref{sigular1}) holds.
The following corollary follows immediately.

\begin{cor}\label{cor4.1}
Let $\bar y\in {\cal S}(\bar x)$.
Suppose that ${\cal S}(x)$ is inner semi-continuous at $(\bar x,\bar y)$ and   MFCQ holds  for the system $g(\bx,y)\leq 0$ at $ \by$. Then $V(x)$ is Lipschitz continuous at $\bx$ and $\partial V(x) \subseteq \{\nabla_x \cL(\bx,\by,{\lambda}): {\lambda} \in \Sigma(\bar x,\bar y)\}$.
\end{cor}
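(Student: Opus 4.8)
\textbf{Proof proposal for Corollary \ref{cor4.1}.}
The plan is to derive this corollary as an immediate consequence of Theorem \ref{thm4.1}, so the bulk of the work is just verifying that the hypotheses of that theorem are in force under the weaker-looking assumption that MFCQ holds for the system $g(\bx,y)\le 0$ at $\by$ (i.e.\ the implication \eqref{partial}). First I would recall, as the paragraph preceding the corollary already observes, that \eqref{partial} is stronger than \eqref{full}: indeed, if $\nabla_y g(\bx,\by)\lambda = 0$ implies $\lambda = 0$ (for $\lambda$ in the relevant complementarity cone), then a fortiori $\nabla g(\bx,\by)\lambda = 0$ implies $\lambda = 0$, since the full gradient $\nabla g(\bx,\by) = (\nabla_x g(\bx,\by);\nabla_y g(\bx,\by))$ vanishes only if its $y$-block does. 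Thus MFCQ in the full variables $(x,y)$ for the system $g(x,y)\le 0$ holds at $(\bx,\by)$, and since MFCQ implies the local error bound property (as noted in Section \ref{sec:pre}, where all the listed CQs are shown to be stronger than the error bound/metric subregularity/calmness), the local error bound hypothesis of Theorem \ref{thm4.1} is satisfied at $(\bx,\by)$.

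Next I would verify the singularity condition \eqref{sigular1}, namely $\{\nabla_x g(\bx,\by)\lambda : \lambda\in\Sigma^0(\bx,\by)\} = \{0\}$. By definition, $\lambda\in\Sigma^0(\bx,\by)$ means $\nabla_y \cL^0(\bx,\by,\lambda) = -\nabla_y g(\bx,\by)\lambda = 0$ together with $0\le -g(\bx,\by)\perp\lambda\ge 0$. But \eqref{partial} says precisely that these conditions force $\lambda = 0$, hence $\Sigma^0(\bx,\by) = \{0\}$ and therefore $\{\nabla_x g(\bx,\by)\lambda : \lambda\in\Sigma^0(\bx,\by)\} = \{0\}$, which is \eqref{sigular1}.

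With the inner semi-continuity of $\cS(x)$ at $(\bx,\by)$ assumed in the hypothesis, the local error bound established in the first step, and \eqref{sigular1} established in the second step, all hypotheses of Theorem \ref{thm4.1} hold. Invoking that theorem then gives directly that $V$ is Lipschitz continuous at $\bx$ and that $\partial V(\bx)\subseteq\partial^c V(\bx)\subseteq\{\nabla_x\cL(\bx,\by,\lambda):\lambda\in\Sigma(\bx,\by)\}$, which is the assertion of the corollary. There is essentially no obstacle here: the only point requiring a moment's care is the logical direction of the implication between \eqref{partial} and \eqref{full} and the observation that \eqref{partial} is exactly the statement $\Sigma^0(\bx,\by)=\{0\}$; everything else is a bookkeeping citation of Theorem \ref{thm4.1} and the standard fact that MFCQ implies the error bound property.
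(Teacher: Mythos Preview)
Your proposal is correct and follows exactly the approach the paper takes: the paragraph immediately preceding the corollary already records that \eqref{partial} implies \eqref{full} (hence the local error bound for $g(x,y)\le 0$ at $(\bx,\by)$) and that \eqref{partial} forces $\Sigma^0(\bx,\by)=\{0\}$ and thus \eqref{sigular1}, after which the paper simply says the corollary ``follows immediately'' from Theorem~\ref{thm4.1}. Your write-up merely spells these steps out in full, with nothing added or omitted.
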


By Robinson \cite[Theorem 3.2]{Robinson82}, when MFCQ holds and second order sufficient condition holds, the optimal solution mapping is inner semi-continuous. Thus, the following result follows immediately.

\begin{cor}
Let $\bar y\in {\cal S}(\bar x)$. Assume that $-f(x,\cdot),g(x,\cdot)$ are convex functions for any $x$ near $\bx$. Suppose that MFCQ holds  for the system $g(\bx,y)\leq 0$ at $ \by$ and for any $\lambda\in \Sigma(\bar x, \bar y)$, the second-order sufficient condition holds at $(\bx, \lambda)$:
\[
d^\top \nabla_{xx}^2 \cL(\bx,\by,\lambda)d<0,\quad \forall d\in {\cal C}(\bx,\by)\backslash \{0\},
\]
where
\[
{\cal C}(\bx,\by)=\left\{d:\begin{array}{l} \nabla_y g_i(\bx,\by)^\top d=0 \ i\in \cI_g(\bx,\by)\ {\rm and}\ \lambda_i>0 \\[4pt]
\nabla_y g_i(\bx,\by)^\top d\le0 \ i\in \cI_g(\bx,\by)\ {\rm and}\ \lambda_i=0 \end{array}\right\}.
\]
Then $V(x)$ is Lipschitz continuous at $\bx$ and $\partial V(x) \subseteq \{\nabla_x \cL(\bx,\by,{\lambda}): {\lambda} \in \Sigma(\bar x,\bar y)\}$.
\end{cor}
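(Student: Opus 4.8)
The plan is to reduce this corollary to Corollary \ref{cor4.1}. Since Corollary \ref{cor4.1} already gives the conclusion ``$V$ is Lipschitz continuous at $\bx$ and $\partial V(x)\subseteq\{\nabla_x\cL(\bx,\by,\lambda):\lambda\in\Sigma(\bar x,\bar y)\}$'' whenever $\cS(x)$ is inner semi-continuous at $(\bx,\by)$ and MFCQ holds for the system $g(\bx,y)\le 0$ at $\by$, it suffices to verify the inner semi-continuity of $\cS(\cdot)$ at $(\bx,\by)$ under the present hypotheses. The MFCQ hypothesis is assumed outright in both statements, so the entire content of the proof is: \emph{convexity of $-f(x,\cdot)$ and $g(x,\cdot)$ near $\bx$, plus MFCQ, plus the second-order sufficient condition, imply $\cS(x)$ is inner semi-continuous at $(\bx,\by)$.}

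First I would restate the inner maximization problem $(P_x)$ as the parametric minimization problem $\min_y\,\{-f(x,y):g(x,y)\le0\}$, whose solution set is exactly $\cS(x)$. Under the convexity assumption on $-f(x,\cdot)$ and $g(x,\cdot)$ for $x$ near $\bx$, this is a parametric convex program, $\by$ is a global minimizer of the problem at $x=\bx$, and $\Sigma(\bx,\by)$ is precisely the set of Lagrange (KKT) multipliers at $\by$; MFCQ at $\by$ guarantees this set is nonempty and (being a polytope defined by $\nabla_y g(\bx,\by)\lambda=-\nabla_y f(\bx,\by)$ together with the sign/complementarity conditions) bounded. Then I would invoke Robinson \cite[Theorem 3.2]{Robinson82}: when MFCQ and the second-order sufficient condition hold at a solution of a (suitably smooth) parametric program, the optimal solution map admits a single-valued Lipschitz localization — in particular it is continuous as a single-valued map near $\bx$, so that for any $x^k\to\bx$ one can choose $y^k\in\cS(x^k)$ with $y^k\to\by$. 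This is exactly inner semi-continuity at $(\bx,\by)$.

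With inner semi-continuity in hand, I would simply apply Corollary \ref{cor4.1} with this $\bar y$: since MFCQ holds for $g(\bx,y)\le0$ at $\by$, $V$ is Lipschitz continuous at $\bx$ and $\partial V(x)\subseteq\{\nabla_x\cL(\bx,\by,\lambda):\lambda\in\Sigma(\bar x,\bar y)\}$, which is the desired conclusion, and the proof is complete.

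The main obstacle is bookkeeping rather than depth: I must check that the present form of the second-order sufficient condition — $d^\top\nabla_{xx}^2\cL(\bx,\by,\lambda)d<0$ for all $d$ in the critical cone $\cC(\bx,\by)$ and all $\lambda\in\Sigma(\bx,\by)$ — matches the hypotheses of the cited Robinson theorem once the maximization is converted to a minimization (where it becomes $d^\top\nabla_{yy}^2(-\cL)d>0$, i.e. the strong second-order sufficient condition for the min problem over the critical cone); one should also note that the critical cone written in the statement uses $\nabla_y g_i$ rather than $\nabla_x g_i$, consistent with perturbation in the decision variable $y$. A secondary subtlety is the mild regularity needed for Robinson's stability result (e.g. $f,g$ twice continuously differentiable), which is part of the standing assumptions and the explicit hypothesis ``$f$ is second-order continuously differentiable'' in the ambient setting; one must make sure $g$ is covered as well, or note that $C^2$ smoothness of the data is what Robinson requires. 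None of these require computation — they are matters of correctly aligning definitions — so the proof remains short.
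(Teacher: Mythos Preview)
Your proposal is correct and follows essentially the same route as the paper: use convexity of the inner problem so that local and global solutions coincide, invoke Robinson \cite[Theorem 3.2]{Robinson82} (under MFCQ plus SOSC) to obtain continuity and hence inner semi-continuity of $\cS$ at $(\bx,\by)$, and then apply Corollary \ref{cor4.1}. The only difference is that you spell out the bookkeeping (sign conventions for SOSC, $C^2$ smoothness) more carefully than the paper does.
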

\begin{proof}
By the convexity assumptions of the involved functions, it follows that globally optimal solutions are the same as the locally optimal solutions. Then by Robinson \cite[Theorem 3.2]{Robinson82}, the solution mapping $\cS$ is continuous and hence inner semi-continuous at $(\bar x,\bar y)$. Thus the desired result follows from Corollary \ref{cor4.1},
\end{proof}

We now identify another situation where the upper estimate of $\partial V(\bx)$ involves only an arbitrarily specified solution $\bar y$.

\begin{thm}\label{thm4.2}
Assume that $\bar y\in {\cal S}(\bar x)$, and $f(x,y)$ is jointly concave and $g(x,y)$ is jointly quasiconvex in $(x,y)$. Suppose that there is an open set ${\cal O} \ni \bar x$ such that $\cF(x)$ is nonempty and the objective function $f(x,y)$ is bounded above on $\cF(x)$. Assume that a CQ holds for the system $g(x,y)\leq 0
$ at $(\bx,\by)$.  Then $V(x)$ is concave, Lipschitz continuous at $\bx$,  and
\begin{eqnarray*}
\partial  V(\bar x) \subseteq  \left  \{\nabla_x \cL(\bx,\by,{\lambda}): {\lambda} \in \Sigma(\bar x,\bar y)\right \}.
\end{eqnarray*}
\end{thm}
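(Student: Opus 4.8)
\textbf{Proof plan for Theorem \ref{thm4.2}.}
The plan is to deduce the result from Theorem \ref{thm4.1}, so the main work is to verify that the hypotheses of that theorem hold here, in particular the inner semi-continuity of $\cS(x)$ at $(\bx,\by)$ and the singular-multiplier condition \eqref{sigular1}. First I would record the easy structural facts. Since $f(x,y)$ is jointly concave and $g(x,y)$ is jointly quasiconvex, the set $\gph\cF=\{(x,y):g(x,y)\le0\}$ is convex, so for each fixed $x$ near $\bx$ the inner problem $(P_x)$ is the maximization of a concave function over a convex set; moreover $V(x)=\sup\{f(x,y):y\in\cF(x)\}$ is the partial supremum of the jointly concave function $f+I_{\gph\cF}$, hence $V$ is concave on $\cO$. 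Because $V$ is concave and finite on the open set $\cO$ (finiteness being exactly the assumption that $\cF(x)\ne\emptyset$ and $f$ is bounded above on $\cF(x)$), standard convex analysis gives that $V$ is locally Lipschitz on $\cO$, in particular Lipschitz at $\bx$; equivalently $\partial^\infty V(\bx)=\{0\}$.

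Next I would establish the two hypotheses of Theorem \ref{thm4.1}. The assumed CQ for the system $g(x,y)\le0$ at $(\bx,\by)$ yields the local error bound for $g(x,y)\le0$ at $(\bx,\by)$ (all the CQs considered in the paper imply the error bound property, as recalled after Definition \ref{defi}). For the inner semi-continuity of $\cS(x)$ at $(\bx,\by)$: given $x^k\to\bx$, the feasible sets $\cF(x^k)$ are nonempty for large $k$ and $f(x^k,\cdot)$ is bounded above on them, so $\cS(x^k)\ne\emptyset$; pick $y^k\in\cS(x^k)$. One then argues the sequence $\{y^k\}$ stays bounded — this is where the concavity/quasiconvexity is used together with the error bound or CQ — and any cluster point $y^*$ satisfies $y^*\in\cF(\bx)$ and, by continuity of $f$ and of $V$ (which we just showed), $f(\bx,y^*)=\lim f(x^k,y^k)=\lim V(x^k)=V(\bx)$, so $y^*\in\cS(\bx)$. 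However, cluster points need not equal the prescribed $\by$; the clean way around this is to invoke, as the earlier text of this section already does for Corollary 4.3, a result of the Robinson type: under the convex-programming structure and the CQ, $\cS$ is actually Lipschitz/continuous (even single-valued generically), hence inner semi-continuous at $(\bx,\by)$ for \emph{every} $\by\in\cS(\bx)$. I would cite Robinson \cite{Robinson82} exactly as in the proof of the preceding corollary.

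With the error bound and inner semi-continuity in hand, Theorem \ref{thm4.1} gives directly
\[
\partial v(\bx)\subseteq\{-\nabla_x\cL(\bx,\by,\lambda):\lambda\in\Sigma(\bx,\by)\},\qquad
\partial^\infty v(\bx)\subseteq\{\nabla_x g(\bx,\by)\lambda:\lambda\in\Sigma^0(\bx,\by)\}.
\]
Since we already know $V=-v$ is Lipschitz at $\bx$, we have $\partial^\infty v(\bx)=\{0\}$, which is condition \eqref{sigular1}; hence the last assertion of Theorem \ref{thm4.1} applies and gives $\partial V(\bx)\subseteq\partial^c V(\bx)\subseteq\{\nabla_x\cL(\bx,\by,\lambda):\lambda\in\Sigma(\bx,\by)\}$, which is exactly the claim. (Alternatively one can bypass \eqref{sigular1} and just use $\partial V(\bx)\subseteq\partial^c V(\bx)=-\co\,\partial v(\bx)$ together with the convexity of $\Sigma(\bx,\by)$ and linearity of $\lambda\mapsto\nabla_x\cL(\bx,\by,\lambda)$, which makes the right-hand side already convex so the convex hull is redundant.)

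\textbf{Main obstacle.} The delicate point is the inner semi-continuity of $\cS$ at the \emph{given} $\by$ rather than at some cluster point; a bare boundedness-plus-subsequence argument only yields a selection converging to \emph{some} solution. Resolving this requires the stronger continuity of the solution mapping that comes from the convex structure plus the CQ (via the Robinson-type stability result), and one must be careful that the CQ assumed is enough for that — in the convex-quasiconvex setting with the error bound this is exactly what \cite{Robinson82} provides, so the argument goes through. A secondary, purely bookkeeping obstacle is checking that "a CQ holds for $g(x,y)\le0$ at $(\bx,\by)$" is interpreted as one of the stable CQs of Definition \ref{defi2}, so that both the error bound and (through the preceding discussion around \eqref{full}--\eqref{partial}) the needed multiplier-set statements are available.
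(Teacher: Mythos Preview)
Your approach has a genuine gap at the inner semi-continuity step. You try to reduce to Theorem~\ref{thm4.1}, but that theorem requires $\cS$ to be inner semi-continuous at the \emph{specific} $(\bx,\by)$, and this is not a consequence of the hypotheses of Theorem~\ref{thm4.2}. Your proposed fix---citing Robinson \cite{Robinson82} ``exactly as in the proof of the preceding corollary''---does not work: that corollary explicitly assumes both MFCQ \emph{and} the second-order sufficient condition, and Robinson's Theorem~3.2 needs both. Neither SOSC nor MFCQ is assumed here (only ``a CQ'' for the joint system $g(x,y)\le 0$), and joint concavity/quasiconvexity alone does not force the solution map to be continuous or inner semi-continuous at an arbitrary $\by\in\cS(\bx)$. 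So the reduction to Theorem~\ref{thm4.1} cannot be completed under the stated assumptions, and your ``alternative'' at the end does not help because it still presupposes the inclusion $\partial v(\bx)\subseteq\{-\nabla_x\cL(\bx,\by,\lambda):\lambda\in\Sigma(\bx,\by)\}$ for the given $\by$, which is exactly what you obtained from Theorem~\ref{thm4.1}.

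The paper's proof takes a different and more direct route that avoids inner semi-continuity entirely. From joint convexity of $-f$ and joint quasiconvexity of $g$, the minimal value function $v$ is convex (Fiacco and Kyparisis \cite[Proposition~2.1]{Fiacco-Kyparisis}); for convex value functions there is an \emph{exact} formula $\partial v(\bx)=\{-\nabla_x\cL(\bx,\by,\lambda):\lambda\in\Sigma(\bx,\by)\}$ valid for \emph{any} $\by\in\cS(\bx)$ under a CQ (Ye and Wu \cite[Proposition~4.1]{YeWu}). Finiteness on $\cO$ plus convexity gives Lipschitz continuity, and then $\partial V(\bx)\subseteq -\co\,\partial v(\bx)$ together with convexity of $\Sigma(\bx,\by)$ and linearity of $\lambda\mapsto\nabla_x\cL(\bx,\by,\lambda)$ finishes the proof. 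The key point you missed is that convexity of $v$ gives the subdifferential formula at any chosen optimal $\by$ without any stability of the solution map.
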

\begin{proof}
Since $-f(x,y)$ is jointly convex and $g(x,y)$ is jointly quasiconvex in $(x,y)$, it follows that $v(x)$ is a convex function
(Fiacco and Kyparisis \cite[Proposition 2.1]{Fiacco-Kyparisis}). Then by Ye and Wu \cite[Proposition 4.1]{YeWu} we have
\begin{eqnarray*}
\partial  v(\bar x) = \left \{-\nabla_x \cL(\bx,\by,{\lambda}): {\lambda} \in \Sigma(\bar x,\bar y) \right \}.
\end{eqnarray*}
Under the assumptions, it is not hard to verify that $V(x)$ is finite around $\bar x$. This together with the convexity of $v(x)$ implies that $v(x)$ must be Lipschitz continuous at $\bx$ (Rockafellar \cite[Theorem 10.4]{rock-convex}). Then by \eqref{estim} and the convexity of Lagrange multiplier set, we have
$$\partial  V (\bar x)\subseteq - \co\partial  v(\bar x)=\left  \{\nabla_x \cL(\bx,\by,{\lambda}): {\lambda} \in \Sigma(\bar x,\bar y)\right \}.
$$
\end{proof}

\subsection{The case when $V$ may not be locally Lipschitz continuous}\label{subsection-sub2}

To obtain an easily computable convex upper estimate for the subdifferential of the minimal value function, in the previous subsection we assume some relatively strong assumptions that the optimal solution map is inner semi-continuous or the  problem is a jointly convex problem, besides the implicit assumption that the maximal value function is locally Lipschitz continuous. In this subsection, we do not employ these conditions and directly investigate the subdifferentials of the maximal value function.

The following example illustrates that the assumptions in Subsection \ref{subsec} fail but all the required conditions in this subsection hold.
\begin{ex}
Consider the following simple example:
\begin{eqnarray*}
\max_y \ y(x+1)\quad {\rm s.t.} \ -5\le x+y \le 0.
\end{eqnarray*}
By a simple calculation, the optimal solution $\cS(x)$ is given by
\begin{eqnarray*}\label{ex-map}
\cS(x) = \left\{\begin{array}{cc} \{-x\} & {\rm if}\ x > -1, \\ [4pt] [-4,1] & {\rm if}\  x = -1, \\[4pt] \{-x-5\} & {\rm if}\ x < -1. \end{array}\right.
\end{eqnarray*}
It is easy to verify that $\cS(x)$  is not inner semi-continuous at any $(\bx,\by)$ with $\bar x=-1,\bar y \in (-4,1)$ and the problem is not jointly convex.
\end{ex}

To give the main result of this subsection, we assume that $-f(x,y)$, $g_i(x,y)\ i=1,\ldots,q $ are second order continuously differentiable in $(x,y)$,  and are convex in $y$ for all $x$ near a reference point $\bar{x}$. We now introduce a dual of problem $(P_\bx)$ proposed in Wolfe
 \cite{wolfe} as follows:
\begin{eqnarray*}
(D_\bx)~~~~~~\min_{y,\lambda} && \cL(\bx,y,\lambda) \\
{\rm s.t.}  && \nabla_y \cL(\bx,y,\lambda) = 0,\ \lambda\ge0.
\end{eqnarray*}
We denote by ${\Omega}(\bx)$ the feasible region and
\begin{equation}\label{ValuefunctionD}
V_D(\bx):=\inf_{y,\lambda}\{ \cL(\bx,y,\lambda): (y,\lambda)\in \Omega(\bx)\}
\end{equation}  the optimal  value function of problem $(D_\bx)$.

\begin{lema}(Wolfe \cite[Theorem 1]{wolfe})\label{weakdulity}
Suppose that $f(x,y)$ is concave in variable  $y$, and $g_i(x,y)\ i=1,\ldots,q $ are convex in variable  $y$ for  {any given $x
$.} Then the weak duality holds, that is,  we have $V(x) \le V_D(x)$.
\end{lema}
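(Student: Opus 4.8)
The plan is to verify the weak duality inequality $V(x)\le V_D(x)$ directly by comparing any primal feasible point for $(P_x)$ with any dual feasible point for $(D_x)$. Fix $x$ and suppose both feasible regions are nonempty (otherwise one side is $-\infty$ or $+\infty$ and the inequality is trivial). Take an arbitrary $y\in\cF(x)$, so $g(x,y)\le 0$, and an arbitrary $(y',\lambda')\in\Omega(x)$, so $\nabla_y\cL(x,y',\lambda')=0$ and $\lambda'\ge 0$. The goal is to show $f(x,y)\le\cL(x,y',\lambda')$; taking the supremum over $y$ and the infimum over $(y',\lambda')$ then gives the claim.

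First I would observe that since $\lambda'\ge0$ and $g(x,y)\le 0$, we have $-g(x,y)^\top\lambda'\ge 0$, hence
\[
f(x,y)\le f(x,y)-g(x,y)^\top\lambda'=\cL(x,y,\lambda').
\]
So it suffices to show $\cL(x,y,\lambda')\le\cL(x,y',\lambda')$, i.e.\ that $y'$ minimizes the function $y\mapsto\cL(x,y,\lambda')$. This is where the convexity hypotheses enter: since $f(x,\cdot)$ is concave and each $g_i(x,\cdot)$ is convex with $\lambda'_i\ge0$, the function $y\mapsto\cL(x,y,\lambda')=f(x,y)-\sum_i\lambda'_ig_i(x,y)$ is concave in $y$... wait, that is the wrong direction. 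Let me restate: $\cL(x,y,\lambda') = f(x,y) - g(x,y)^\top\lambda'$, and we want $\cL(x,\cdot,\lambda')$ to be minimized (not maximized) at $y'$. Since $-\cL(x,\cdot,\lambda')=-f(x,\cdot)+g(x,\cdot)^\top\lambda'$ is a sum of the convex function $-f(x,\cdot)$ and a nonnegative combination of the convex functions $g_i(x,\cdot)$, it is convex in $y$; equivalently $\cL(x,\cdot,\lambda')$ is concave. Hmm — a concave function is not generally minimized at a stationary point.

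Re-examining: Wolfe duality for a \emph{maximization} primal uses $\cL = f - g^\top\lambda$, and for the inequality to work one needs $\cL(x,\cdot,\lambda')$ to be \emph{concave} so that a stationary point is a \emph{global maximizer}, giving $\cL(x,y,\lambda')\le\cL(x,y',\lambda')$ for all $y$ — which is exactly what we want. So the correct reading is: concavity of $f(x,\cdot)$ and convexity of the $g_i(x,\cdot)$ make $\cL(x,\cdot,\lambda')$ concave; the dual feasibility condition $\nabla_y\cL(x,y',\lambda')=0$ then says $y'$ is an unconstrained critical point of this concave function, hence a global maximizer over all $y$. Therefore $f(x,y)\le\cL(x,y,\lambda')\le\cL(x,y',\lambda')=V_D$-candidate value. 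Taking $\sup$ over $y\in\cF(x)$ and $\inf$ over $(y',\lambda')\in\Omega(x)$ yields $V(x)\le V_D(x)$.

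The only real subtlety — and the step I would be most careful about — is the concavity/critical-point argument: one must note that $\nabla_y\cL(x,y',\lambda')=0$ together with concavity of $\cL(x,\cdot,\lambda')$ in $y$ over all of $\R^m$ (guaranteed for \emph{any} fixed $x$ by hypothesis) implies global maximality, since for a concave differentiable function $\psi$ one has $\psi(y)\le\psi(y')+\nabla\psi(y')^\top(y-y')=\psi(y')$. Everything else — the sign of $-g(x,y)^\top\lambda'$, and the passage to $\sup/\inf$ — is routine. I would also explicitly dispose of the degenerate cases ($\cF(x)=\emptyset$ giving $V(x)=-\infty$, or $\Omega(x)=\emptyset$ giving $V_D(x)=+\infty$) at the outset so the inequality holds vacuously there.
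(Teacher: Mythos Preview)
Your argument is correct and is exactly the standard proof of Wolfe weak duality: use primal feasibility $g(x,y)\le 0$ together with $\lambda'\ge 0$ to get $f(x,y)\le \cL(x,y,\lambda')$, then use concavity of $\cL(x,\cdot,\lambda')$ and the stationarity condition $\nabla_y\cL(x,y',\lambda')=0$ to conclude $\cL(x,y,\lambda')\le \cL(x,y',\lambda')$, and finally take $\sup$/$\inf$. The mid-paragraph hesitation about ``minimized vs.\ maximized'' is resolved correctly; you might simply delete that detour in a clean write-up.

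Note that the paper does not supply its own proof of this lemma at all: it is stated with a direct citation to Wolfe \cite[Theorem 1]{wolfe} and no argument is given. So there is nothing to compare against beyond observing that your proof is precisely the classical one from Wolfe's paper (adapted to the maximization primal), and it goes through without issue.
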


For a given scalar $r\in \{0,1\}$ and  $(y,\lambda)\in \Omega(\bx)$, we denote the set of generalized multipliers for problem $(D_{\bx})$ at $(y,{\lambda})$ as:
\[\Xi^r(\bx, y, {\lambda}):=
\left\{u: \nabla_{yy}^2 \cL(\bx,y,{\lambda}) u =0,\
 0\le -\nabla_{y}g(\bx,y)^\top u- r g(\bx,y)\ \bot\ {\lambda} \ge0\right \}.
\]
We give some comments on the set $\Xi^r(\bx,y,{\lambda})$. First note that we always have $0\in \Xi^r(\bx,y,{\lambda})$.  MFCQ holds at $(y,\lambda)\in \Omega(\bar x)$ for problem $(D_\bx)$ if $\Xi^0(\bx,y,{\lambda})=\{0\}.$ Moreover, $\Xi^0(\bx,y,{\lambda})\subseteq \Xi^1(\bx,y,{\lambda})$ since $g(\bx,y)\le0$.

{Using the weak duality, we can study the value function for a maximization problem as a value function  for a minimization problem. Hence in the following theorem, we can obtain upper estimates of the  subdifferentials for the  maximal value function without using the convex hull on all  optimal solutions.}

For minimization problems, a so-called restricted inf-compactness condition  firstly introduced in Clarke \cite[Hypothesis 6.5.1]{Clarke} and termed in Guo et al. \cite[Definition 3.8]{sensitivity}, is used to ensure the lower semi-continuity of the value function. For maximization problems, we propose the following restricted sup-compactness condition.
\begin{defi}[Restricted sup-compactness]\label{Defn3.2}
We say that the restricted sup-compactness condition holds at $\bar x$  if $V(\bar x)$ is finite and there exist  positive scalars $\epsilon, \delta$ and a compact set $\Omega$ such that for all $ x\in \cB_\delta(\bar x)$ with $V(x) > V(\bar x)-\epsilon$, one always has $\cS(x)\cap \Omega \neq\emptyset$.
\end{defi}
It is easy to see that the restricted sup-compactness condition is weaker than the sup-compactness condition, i.e., there exist $\delta>0,\alpha<V(\bar x)$, and
a compact set $\Omega$ such that 
$$
\{ y\in {\cal F}{(x)}: f(x, y) \geq \alpha, x \in {\cal B}_\delta(\bar x)\subseteq \Omega.
$$ 
Moreover it is easy to see that the sup-compactness condition is weaker than the uniform compactness of the feasible solution mapping ${\cal F}$, i.e., the closure of $\bigcup_{x\in V}\cF(x)$ is compact for some neighborhood $V$ of $\bar x$.

\begin{thm}\label{thm sub}
Let $\by\in \cS(\bx)$ and  $\bar{\lambda}\in \Sigma(\bx,\by)$. Suppose that a CQ holds at  $(\bx,\by,\bar{\lambda})$   for the following constraint system in $(x,y,\lambda)$:
\begin{equation}\label{constraintsystem}
\nabla_y \cL(x,y,\lambda) = 0,\ \lambda\ge0.
\end{equation}
Then we have
\begin{eqnarray}
\label{regular}
&& \widehat{\partial}V(\bx) \subseteq \left\{\nabla_x \cL(\bx,\by,\bar\lambda)+ \nabla_{xy}^2\cL(\bx,\by,\bar\lambda) u:
u\in \Xi^1(\bx,\by,\bar{\lambda})\right\}.
 \end{eqnarray}
Suppose that the restricted sup-compactness holds at $\bx$,  a stable parametric CQ holds for $(P_\bx)$ at all $y\in \cS(\bx)$, and {a stable CQ} holds at all $(\bx,  \tilde y,\tilde{\lambda})$ for the constraint system (\ref{constraintsystem}) where $\tilde y\in {\cal S}(\bx)$ and $\tilde\lambda\in \Sigma(\bx,\tilde y)$.
Then we have
\begin{eqnarray}
&& \partial V(\bx) \subseteq \bigcup_{{y\in {\cal S}(\bar x)}} \left\{\nabla_x \cL(\bx,y, \lambda) + \nabla_{xy}^2 \cL(\bx,y, \lambda) u :
u\in \Xi^1(\bx,y, {\lambda}), \lambda \in \Sigma(\bar x,y)
\right\},\label{limiting}\\
&& \partial^\infty V(\bx) \subseteq \bigcup_{y\in {\cal S}(\bar x)} \left\{\nabla_{xy}^2 \cL(\bx,y,\lambda) u:u\in \Xi^0(\bx,y, {\lambda}) , \lambda \in \Sigma(\bar x,y)
\right\}.\label{horizon}
\end{eqnarray}
\end{thm}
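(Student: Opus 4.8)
The plan is to derive the three inclusions \eqref{regular}, \eqref{limiting}, \eqref{horizon} by transferring everything to the minimization problem $(D_\bx)$ via Wolfe duality and then invoking the known sensitivity results for minimal value functions. First I would establish the key identity $V(x) = V_D(x)$ on a neighborhood of $\bx$. The inequality $V(x)\le V_D(x)$ is exactly Lemma \ref{weakdulity}. For the reverse inequality I would use the convexity of $-f(x,\cdot)$ and $g_i(x,\cdot)$ in $y$ together with the assumption $\bar\lambda\in\Sigma(\bx,\by)$: since $\by$ solves $(P_\bx)$ and a CQ-type condition is available (through $\Sigma(\bx,\by)\neq\emptyset$ and $\by\in\cS(\bx)$), the KKT multiplier $\bar\lambda$ makes $(\by,\bar\lambda)$ feasible for $(D_\bx)$ with $\cL(\bx,\by,\bar\lambda)=f(\bx,\by)=V(\bx)$ by complementary slackness. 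By Wolfe's strong duality for convex programs (and the fact that for $x$ near $\bx$ the same structure persists, using the restricted sup-compactness and the stable parametric CQ to get multipliers at each $y\in\cS(x)$), we get $V(x)=V_D(x)$ locally. Consequently $\widehat\partial V(\bx)=\widehat\partial V_D(\bx)$, $\partial V(\bx)=\partial V_D(\bx)$, and $\partial^\infty V(\bx)=\partial^\infty V_D(\bx)$.

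Next I would apply a standard sensitivity estimate for the minimal value function $V_D$ of the parametric minimization problem $(D_x)$, whose constraint system is \eqref{constraintsystem}. The objective of $(D_\bx)$ is $\cL(\bx,y,\lambda)$, whose partial gradient in $x$ is $\nabla_x\cL(\bx,y,\lambda)$, and the constraint map $\nabla_y\cL(x,y,\lambda)=0,\ \lambda\ge0$ has, at $(\bx,y,\lambda)$, the KKT multiplier condition expressed through $\Xi^1$: multiplying the equality constraint $\nabla_y\cL$ by a multiplier $u$ contributes $\nabla^2_{yx}\cL(\bx,y,\lambda)u$ to the $x$-gradient of the Lagrangian of $(D_\bx)$, i.e.\ $\nabla^2_{xy}\cL(\bx,y,\lambda)u$, and the stationarity/complementarity in the remaining $(y,\lambda)$ block is precisely $u\in\Xi^1(\bx,y,\lambda)$ (for the Fr\'echet/limiting case) or $u\in\Xi^0$ (for the horizon case, where $r=0$ drops the $g$ term). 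With a CQ at $(\bx,\by,\bar\lambda)$ for \eqref{constraintsystem}, the Fr\'echet subdifferential estimate for a minimal value function with a single minimizer (of the type in the references cited earlier, e.g.\ applying the Fr\'echet-normal-cone computation to $\gph$ of the feasible map of $(D_\bx)$) yields \eqref{regular}. For the limiting and horizon estimates I would combine: (a) restricted sup-compactness at $\bx$, which by Definition \ref{Defn3.2} forces $V_D=V$ to be lower semi-continuous at $\bx$ and confines the relevant minimizers to a compact set, so that any sequence realizing an element of $\partial V_D(\bx)$ has minimizers accumulating at some $y\in\cS(\bx)$; (b) the stable parametric CQ for $(P_\bx)$ at each $y\in\cS(\bx)$, which gives the outer-semicontinuity of the multiplier map $\Sigma(x,y)$ so the limiting-subdifferential passage to the limit stays inside $\bigcup_{y\in\cS(\bx)}\Sigma(\bx,y)$; and (c) the stable CQ at each $(\bx,\tilde y,\tilde\lambda)$ for \eqref{constraintsystem}, which similarly controls the $\Xi$-multipliers $u$ in the limit. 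Passing to the limit in the single-minimizer Fr\'echet estimate along the sequences supplied by (a)--(c) produces \eqref{limiting} and, scaling by $t_k\downarrow0$, \eqref{horizon}.

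I expect the main obstacle to be the limiting step, specifically the simultaneous control of two nested multiplier systems as $x^k\to\bx$: the $\Sigma$-multipliers $\lambda^k$ attached to $(P_{x^k})$ and the $\Xi$-multipliers $u^k$ attached to the dual constraint system at $(x^k,y^k,\lambda^k)$. The stable parametric CQ (Definition \ref{defi1}) gives boundedness and the correct limit for $\{\lambda^k\}$, but then one must re-apply stability, now the stable CQ of Definition \ref{defi2} with the ``parameter'' $(x,y,\lambda)$ frozen at its limiting value, to obtain a bounded convergent subsequence of $\{u^k\}$ whose limit lies in $\Xi^1(\bx,y,\lambda)$ (or $\Xi^0$ in the horizon case). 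The delicate point is that the base point of the inner CQ is itself varying with $k$, so one needs the local persistence built into the stable-CQ definition to ensure the CQ actually holds at each $(x^k,y^k,\lambda^k)$; this is exactly why the hypotheses demand the stable CQ at \emph{all} $(\bx,\tilde y,\tilde\lambda)$ with $\tilde y\in\cS(\bx)$, $\tilde\lambda\in\Sigma(\bx,\tilde y)$, not just at $(\bx,\by,\bar\lambda)$. Once boundedness and local persistence are in hand, the convergence is routine, and the gradient terms $\nabla_x\cL$ and $\nabla^2_{xy}\cL\,u$ pass to the limit by continuity of the (second-order) derivatives, giving the stated unions over $y\in\cS(\bx)$.
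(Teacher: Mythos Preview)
Your overall architecture and, in particular, your analysis of the limiting step (the ``main obstacle'' paragraph) match the paper's proof of parts (ii) and (iii) quite closely: take sequences $x^k$ realizing an element of $\partial V(\bx)$ or $\partial^\infty V(\bx)$, use restricted sup-compactness to extract $y^k\in\cS(x^k)$ with $y^k\to\tilde y\in\cS(\bx)$, use the stable parametric CQ for $(P_{x^k})$ to produce and control $\lambda^k\in\Sigma(x^k,y^k)$, use local persistence of the stable CQ for \eqref{constraintsystem} to apply the Fr\'echet estimate at each $(x^k,y^k,\lambda^k)$, and finally use condition~(ii) of the stable CQ to pass the resulting multipliers $u^k$ to the limit.

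There is, however, a genuine gap in your handling of part~(i). You propose to first establish the identity $V(x)=V_D(x)$ on a \emph{neighborhood} of $\bx$, invoking restricted sup-compactness and the stable parametric CQ to manufacture KKT multipliers for nearby $x$. But those hypotheses are \emph{not} assumed for the Fr\'echet inclusion \eqref{regular}; part~(i) only assumes $\by\in\cS(\bx)$, $\bar\lambda\in\Sigma(\bx,\by)$, and a CQ for \eqref{constraintsystem} at the single point $(\bx,\by,\bar\lambda)$. Under these minimal hypotheses you have no control over $\cS(x)$ or $\Sigma(x,\cdot)$ for $x\neq\bx$, so strong duality on a neighborhood is out of reach. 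The paper avoids this entirely: it uses only the \emph{weak} duality $V(x)\le V_D(x)$ (Lemma~\ref{weakdulity}) together with equality at the single base point, $V(\bx)=f(\bx,\by)=\cL(\bx,\by,\bar\lambda)$ from complementary slackness. These two facts already force, for any $\xi\in\widehat\partial V(\bx)$ and any $\epsilon>0$, that $(\bx,\by,\bar\lambda)$ locally minimizes $\cL(x,y,\lambda)-\xi^\top(x-\bx)+\epsilon\|x-\bx\|$ over $\nabla_y\cL(x,y,\lambda)=0,\ \lambda\ge0$; the KKT conditions under the assumed CQ and $\epsilon\downarrow0$ then give \eqref{regular}. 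Equivalently, weak duality plus equality at $\bx$ already yields the one-sided inclusion $\widehat\partial V(\bx)\subseteq\widehat\partial V_D(\bx)$, which is all you need; the reverse inclusion (and hence $V=V_D$ locally) is neither available nor required. Once you drop the neighborhood identity and work with weak duality only, your plan coincides with the paper's.
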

\begin{proof}
(i) Let $\xi\in \widehat{\partial} V(\bx)$. Then by the definition of regular subdifferentials, for any $\epsilon>0$, there exists $\delta_\epsilon>0$ such that
\[
V(x) - V(\bx) - \xi^\top (x-\bx) \ge -\epsilon \|x-\bx\|,\quad \forall x\in \cB_{\delta_\epsilon}(\bx).
\]
Then by the weak duality in Lemma \ref{weakdulity},  we have
$$
V_D(x) - \xi^\top (x-\bx) +\epsilon \|x-\bx\| \ge f(\bx,\by),\quad \forall x\in \cB_{\delta_\epsilon}(\bx).
$$
Thus by the definition of $V_D(x)$, it follows that
\begin{equation}\label{inequ1}
\cL(x,y,\lambda)  -\xi^\top (x-\bx)  + \epsilon \|x-\bx\| \ge f(\bx,\by),\quad \forall x\in \cB_{\delta_\epsilon}(\bx),\ \forall (y,\lambda)\in \Omega(x).
\end{equation}
Since $(\by,\bar\lambda)\in \Omega(\bx)$ by $\bar{\lambda}\in \Sigma(\bx,\by)$,  the relation \eqref{inequ1} implies that $(\bx,\by,\bar{\lambda})$  is a local minimizer of the problem
\begin{eqnarray}\label{dualmin}
\begin{array}{rl}
\min\limits_{x,y,\lambda} & \cL(x,y,\lambda) -\xi^\top (x-\bx) +\epsilon \|x-\bx\|\\[5pt]
{\rm s.t.} & \nabla_y \cL(x,y,\lambda) = 0,\ \lambda\ge0.
\end{array}
\end{eqnarray}
Since a CQ holds at $(\bx,\by,\bar{\lambda})$, it follows that $(\bx,\by,\bar{\lambda})$ is a KKT point of problem \eqref{dualmin}, i.e., there exists a multiplier $u$ such that
\begin{eqnarray*}
&&\xi \in \nabla_x \cL(\bx,\by,\bar{\lambda}) + \epsilon \mathbb{B} + \nabla_{xy}^2 \cL(\bx,\by,\bar{\lambda}) u,\\
&&\nabla_{yy}^2\cL(\bx,\by,\bar{\lambda})u = 0,\ 0\le -\nabla_{y}g(\bx,\by)^\top u-g(\bx,\by) \ \bot\ \bar{\lambda} \ge0.
\end{eqnarray*}
The formula \eqref{regular} follows from the definition of $\Xi^1(\bx,\by,\bar{\lambda})$ and the arbitrariness of  $\epsilon$.

(ii) Let $\xi\in \partial V(\bx)$. Then by the definition of limiting subdifferentials, there exist sequences $x^k\to\bx$ with $V(x^k)\to V(\bx)$ and $\xi^k \in \widehat{\partial} V(x^k)$ such that $\xi^k\to \xi$. Since $V(x^k)\to V(\bx)$, by the restricted sup-compactness assumption, there exists a bounded sequence $\{y^k\in \cS(x^k)\}$. Assume, without loss of generality, that $y^k\to {\tilde y}$ as $k\to\infty$. Thus by the continuity of $f(x,y)$, it follows that
\[
V(\bx) = \lim_{k\to\infty} V(x^k) = \lim_{k\to\infty} f(x^k,y^k) =f(\bx,\tilde y).
\]
Thus we have $\tilde y\in \cS(\bx)$.

Since a stable parametric CQ holds at $\tilde y\in \cS(\bx)$ for problem $(P_\bx)$, this CQ also holds at $y^k\in \cS(x^k)$ for all $k$ sufficiently large for problem $(P_{x^k})$. Thus for all $k$ sufficiently large, there exists KKT multiplier $\lambda^k$ such that
\[
\nabla_y f(x^k,y^k) - \nabla_y g (x^k,y^k) \lambda^k=0, \ 0\le -g (x^k,y^k)\ \bot \ \lambda^k\ge0.
\]
This means that
$$\lambda^k \in \cM(\nabla_y f(x^k,y^k) ,x^k, y^k)
,$$
where $\cM(\cdot)$  is the multiplier set defined as in (\ref{stand multi}).   By the definition of the stable parametric CQs, we may find a subsequence on
which $\{\lambda^k\}$ converges to some {$\tilde {\lambda}$} with $\lambda^k \in \Sigma(x^k,y^k)$ and $\tilde {\lambda}\in \Sigma(\bx,\tilde{y})$. Without loss of generality, we assume that the whole sequence $\{(y^k,\lambda^k)\}$ converges to $(\tilde y,\tilde {\lambda})$ as $k\to\infty$.

Since a stable CQ holds at $(\bx, \tilde y,\tilde{\lambda})$ for the constraint system \eqref{constraintsystem} and $(x^k, y^k,\lambda^k) \to (\bx, \tilde y,\tilde{\lambda})$ as $k\to\infty$, by the local persistence, it follows that for all $k$ sufficiently large, this CQ holds at $(x^k, y^k,\lambda^k)$
for the constraint system (\ref{constraintsystem}).
Thus from the result (i), we can see that \eqref{regular} holds if one replaces $(\bar x,\bar y,\bar \lambda)$  with $(x^k, y^k,\lambda^k)$. Then by \eqref{regular}, the relation $\xi^k \in \widehat{\partial} V(x^k)$ implies that there exists $u^k$ such that
\begin{itemize}
\item[\rm (a)] $\xi^k = \nabla_x \cL(x^k,y^k,\lambda^k) + \nabla_{xy}^2 \cL(x^k,y^k,\lambda^k)u^{k}$,
\item[\rm (b)] $\nabla_{yy}^2 \cL(x^k,y^k,\lambda^k)u^{k}=0$,
\item[\rm (c)] $0\le -\nabla_{y}g(x^k,y^k)^\top u^k-g(x^k,y^k)\ \bot\ \lambda^k \ge0$.
\end{itemize}
By the continuity of functions $\nabla_x \cL(x,y,\lambda)$ and $g(x,y)$, it follows that as $k\to\infty$,
\begin{eqnarray}
&& \xi^k - \nabla_x \cL(x^k,y^k,\lambda^k) \to \xi -\nabla_x \cL(\bx,\tilde y, \tilde{\lambda}),\label{limit3} \\[4pt]
&& g(x^k,y^k)\to g(\bx,\by). \label{limit5}
\end{eqnarray}
Recall that a stable CQ holds for the constraint system (\ref{constraintsystem}) at $(\bx,\tilde y,\tilde{\lambda})$. Then by condition (ii) of stable CQs, from the relations (a)-(c) and \eqref{limit3}-\eqref{limit5} it follows that there exists $u$ such that
\begin{itemize}
\item $\xi = \nabla_x \cL (\bx,\tilde y,\tilde {\lambda}) + \nabla_{xy}^2 \cL(\bx,\tilde y,\tilde {\lambda})u$,
\item $\nabla_{yy}^2\cL(\bx,\tilde y,\tilde {\lambda})u = 0,\ 0\le -\nabla_{y}g(\bx,\tilde y)^\top u-g(\bx,\tilde y)\ \bot\ \tilde{\lambda} \ge0$.
\end{itemize}
Then the formula \eqref{limiting} follows immediately from the above results and the definition of $\Xi^1(\bx,\tilde y,\tilde{\lambda})$.

(iii) Let $\xi\in \partial^\infty V(\bx)$. Then by the definition of horizon subdifferentials, there exist $x^k\to\bx$ with $V(x^k)\to V(\bx)$, $t_k\to0^+$,  and $\xi^k \in \widehat{\partial} V(x^k)$ such that $t_k\xi^k\to \xi$. Similarly as the proof for the result (ii), it follows from the relation $\xi^k \in \widehat{\partial}V(x^k)$ that for all $k$ sufficiently large, there exists $u_k$ such that (a)-(c) in the proof of the result (ii) hold. Multiplying (a)-(c)  by $t_k$ yields
\begin{itemize}
\item[(a1)] $t_k\xi^k = t_k\nabla_x \cL(x^k,y^k,\lambda^k) + \nabla_{xy}^2\cL(x^k,y^k,\lambda^k) (t_ku^k)$,
\item[(b1)] $ \nabla_{yy}^2\cL(x^k,y^k,\lambda^k) (t_ku^k) = 0$,
\item[(c1)] $0\le -\nabla_{y}g(x^k,y^k)^\top (t_ku^k)-g(x^k,y^k) t_k \ \bot\ \lambda^k \ge0$.
\end{itemize}
By the continuity of functions $\nabla_x \cL(x,y,\lambda)$ and $g(x,y)$, and the facts that $t_k\xi^k\to \xi$ and $t_k\to0^+$, it follows that as $k\to\infty$,
\begin{eqnarray}
&& t_k\xi^k - t_k\nabla_x \cL(x^k,y^k,\lambda^k) \to \xi, \ g(x^k,y^k) t_k\to 0.\label{limit2}
\end{eqnarray}
Note that a stable CQ holds for the constraint system \eqref{constraintsystem} at $(\bx,\tilde y,\tilde {\lambda})$. Then by condition (ii) of stable CQs, using the relations (a1)-(c1) and \eqref{limit2} implies that there exists $u$ such that
\begin{itemize}
\item $\xi =  \nabla_{xy}^2 \cL(\bx,\tilde y,\tilde {\lambda}) u$,
\item $\nabla_{yy}^2 \cL(\bx,\tilde y,\tilde{\lambda}) u =0$,\ $0\le -\nabla_{y}g(\bx,\tilde y)^\top u\ \bot\ \tilde{\lambda} \ge0$.
\end{itemize}
Then the formula \eqref{horizon} follows immediately from the above results and the definition of $\Xi^0(\bx,\tilde y,\tilde{\lambda})$.
\end{proof}

By Clarke \cite[Corollary 1 to Theorem 6.5.2]{Clarke}, under the restricted sup-compactness at $\bx$ and  MFCQ at all  $y\in {\cal S}(\bar x)$, the value function $V(x)$ is Lipschitz continuous at $\bx$. The following corollary provides an alternative sufficient condition for Lipschitz continuity of the value function.


\begin{cor}\label{Cor4.2}
Suppose that the restricted sup-compactness holds at $\bx$ and
a stable parametric CQ holds at all $y\in \cS(\bx)$ for problem $(P_\bx)$,
  {a stable CQ} holds at all $(\bx,\tilde y,\tilde{\lambda})$  for the constraint system (\ref{constraintsystem}) where  $\tilde y \in \cS(\bx)$ and $\tilde{\lambda}\in \Sigma(\bx,\by)$.  Moreover assume that the  value function $V(x)$ is continuous at $\bar x$.
 If
 \begin{equation}\label{condition2}
\bigcup_{y\in {\cal S}(\bar x)} \left\{\nabla_{xy}^2 \cL(\bx,y,\lambda) u:u\in \Xi^0(\bx,y, {\lambda}) , \lambda \in \Sigma(\bar x,y)
\right\} =\{0\},
 \end{equation}
then $V(x)$ is Lipschitz continuous at $\bx$  and the upper estimate (\ref{limiting}) holds.
In addition to the above assumptions, if
$$ \bigcup_{{y\in {\cal S}(\bar x)}} \left\{\nabla_x \cL(\bx,y, \lambda) + \nabla_{xy}^2 \cL(\bx,y, \lambda) u :
u\in \Xi^1(\bx,y, {\lambda}), \lambda \in \Sigma(\bar x,y)
\right\} =\{\zeta\},$$
then $V(x)$ is strictly differentiable and $\nabla V(\bar x)=\zeta$.
\end{cor}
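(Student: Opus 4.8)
\textbf{Proof proposal for Corollary \ref{Cor4.2}.}

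The plan is to derive Lipschitz continuity from the horizon subdifferential being trivial, and then feed that back into Theorem \ref{thm sub}. First I would observe that all the hypotheses needed to invoke the limiting and horizon upper estimates \eqref{limiting}--\eqref{horizon} of Theorem \ref{thm sub} are in place: the restricted sup-compactness at $\bx$, the stable parametric CQ at every $y\in\cS(\bx)$ for $(P_\bx)$, and the stable CQ at every $(\bx,\tilde y,\tilde\lambda)$ for the constraint system \eqref{constraintsystem}. Applying \eqref{horizon} together with the assumed identity \eqref{condition2} gives $\partial^\infty V(\bx)\subseteq\{0\}$, and since $0\in\partial^\infty V(\bx)$ always, we get $\partial^\infty V(\bx)=\{0\}$. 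Because $V$ is assumed continuous (hence locally lower semi-continuous) at $\bx$, Rockafellar and Wets \cite[Theorem 9.13]{Rock98} yields that $V$ is Lipschitz continuous near $\bx$. With Lipschitz continuity established, Theorem \ref{thm sub} applies in full and the limiting upper estimate \eqref{limiting} holds, which is the first assertion.

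For the second assertion I would use the characterization of strict differentiability via the limiting subdifferential: a function that is Lipschitz continuous around $\bx$ is strictly differentiable at $\bx$ with gradient $\zeta$ if and only if $\partial V(\bx)=\{\zeta\}$ (see e.g. Rockafellar and Wets \cite[Theorem 9.18]{Rock98} or Mordukhovich \cite{mor-book}). Under the additional hypothesis that the right-hand side of \eqref{limiting} equals the singleton $\{\zeta\}$, the inclusion \eqref{limiting} forces $\partial V(\bx)\subseteq\{\zeta\}$; combined with the nonemptiness of $\partial V(\bx)$ for a Lipschitz function (again \cite[Theorem 9.13]{Rock98} or the basic calculus of limiting subdifferentials), this gives $\partial V(\bx)=\{\zeta\}$, whence $V$ is strictly differentiable at $\bx$ with $\nabla V(\bx)=\zeta$.

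The only mildly delicate point is making sure the reductions in the first paragraph genuinely match the hypotheses of Theorem \ref{thm sub}: in particular, \eqref{horizon} is stated under the hypotheses of the second part of Theorem \ref{thm sub}, which require exactly the stable-parametric-CQ and stable-CQ conditions we have assumed, so no gap arises there. I do not anticipate a substantive obstacle — the corollary is essentially a packaging of Theorem \ref{thm sub} with the standard dictionary $\partial^\infty V(\bx)=\{0\}\Leftrightarrow$ Lipschitz and $\partial V(\bx)=\{\zeta\}\Leftrightarrow$ strict differentiability — so the "hard part," such as it is, is simply citing the right fixed-point/Lipschitz characterization and verifying that $0$ (resp. $\zeta$) is forced to lie in the relevant subdifferential rather than that it could be empty.
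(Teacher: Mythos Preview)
Your proposal is correct and follows essentially the same approach as the paper: invoke \eqref{horizon} together with \eqref{condition2} to force $\partial^\infty V(\bx)=\{0\}$, appeal to \cite[Theorem 9.13]{Rock98} (using continuity of $V$) for Lipschitz continuity, and then read off \eqref{limiting}. The paper's proof differs only cosmetically in that it cites \cite[Theorem 8.9]{Rock98} to justify $0\in\partial^\infty V(\bx)$ from continuity rather than invoking the cone property, and it leaves the strict differentiability conclusion implicit where you spell out the $\partial V(\bx)=\{\zeta\}$ characterization via \cite[Theorem 9.18]{Rock98}.
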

\begin{proof}
Since $V(x)$ is continuous at $\bx$,  we have  $0 \in \partial^\infty V(\bx)$ (Rockafellar and Wets \cite[Theorem 8.9]{Rock98}). By Theorem \ref{thm sub} and condition \eqref{condition2}, it follows that $\partial^\infty V(\bx) \subseteq \{0\}$. Thus, $\partial^\infty V(\bx) = \{0\}$ and hence $V(x)$ is Lipschitz continuous at $\bx$ (Rockafellar and Wets\cite[Theorem 9.13]{Rock98}). The rest of the proof follows from Theorem \ref{thm sub} immediately.
\end{proof}

In Corollary \ref{Cor4.2},  the continuity of the value function $V(x)$ is needed.  The following proposition gives a very weak sufficient condition for the continuity of the value function.
In fact the condition is even weaker than  RS at $\bar x$ which is weaker than MFCQ at one solution $\bar y \in {\cal F}(\bar x)$ which, together with the uniform compactness of the  feasible map $\cF(x)$ around $\bar x$, was shown to be sufficient for ensuring the continuity by Gauvin and Dubeau in \cite[Theorems 3.3 and 5.1]{Gauvin-Dubeau}.
\begin{prop}\label{lema-conti}
Consider the parametric optimization problem $(P_x)$. Suppose that the restricted sup-compactness holds at $\bar x$ and there exists $\bar{y}\in S(\bar{x})$ such that $\lim\limits_{x\to\bar{x}} d(\bar{y},\cF(x))=0$. Then $V(x)$ is continuous at $\bx$.
\end{prop}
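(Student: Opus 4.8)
The plan is to establish continuity of $V$ at $\bar x$ by separately proving upper and lower semi-continuity, using the restricted sup-compactness for the former and the feasibility-approximation hypothesis $\lim_{x\to\bar x} d(\bar y,\cF(x))=0$ for the latter. Since $V(\bar x)$ is finite (part of the restricted sup-compactness assumption), both inequalities $\limsup_{x\to\bar x} V(x)\le V(\bar x)$ and $\liminf_{x\to\bar x} V(x)\ge V(\bar x)$ make sense and together give continuity.

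\emph{Lower semi-continuity.} This is the easier half. Fix any sequence $x^k\to\bar x$. By hypothesis there exist $y^k\in\cF(x^k)$ with $\|y^k-\bar y\|=d(\bar y,\cF(x^k))\to 0$, so $y^k\to\bar y$. Since $y^k\in\cF(x^k)$ we have $V(x^k)\ge f(x^k,y^k)$, and by continuity of $f$ the right-hand side converges to $f(\bar x,\bar y)=V(\bar x)$ (using $\bar y\in\cS(\bar x)$). Hence $\liminf_{k\to\infty}V(x^k)\ge V(\bar x)$, and since the sequence was arbitrary, $V$ is lower semi-continuous at $\bar x$.

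\emph{Upper semi-continuity.} Suppose for contradiction that $\limsup_{x\to\bar x}V(x)>V(\bar x)$; then there is $\epsilon_0>0$ and a sequence $x^k\to\bar x$ with $V(x^k)>V(\bar x)+\epsilon_0$ for all $k$ (in particular $V(x^k)>V(\bar x)-\epsilon$ for the $\epsilon$ from Definition \ref{Defn3.2}, for $k$ large). By restricted sup-compactness, for $k$ large there exist $y^k\in\cS(x^k)\cap\Omega$ with $\Omega$ compact, so along a subsequence $y^k\to\tilde y$ for some $\tilde y$. Then $V(x^k)=f(x^k,y^k)\to f(\bar x,\tilde y)$ by continuity of $f$. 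But $f(\bar x,\tilde y)\le V(\bar x)$ whenever $\tilde y\in\cF(\bar x)$ — one still needs $\tilde y\in\cF(\bar x)$, which follows because $\cF$ has closed graph (as $g$ is continuous and $\cF(x)=\{y:g(x,y)\le 0\}$), so $g(x^k,y^k)\le 0$ passes to the limit $g(\bar x,\tilde y)\le 0$. Thus $V(\bar x)+\epsilon_0\le\lim_k V(x^k)=f(\bar x,\tilde y)\le V(\bar x)$, a contradiction. Hence $\limsup_{x\to\bar x}V(x)\le V(\bar x)$.

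Combining the two inequalities yields $\lim_{x\to\bar x}V(x)=V(\bar x)$, i.e., $V$ is continuous at $\bar x$. The only subtle point — and the one I would be most careful about — is the limit argument in the upper-semicontinuity step: one must verify that the limit $\tilde y$ of the near-optimal points is actually feasible for the limiting problem $(P_{\bar x})$, which is exactly what closedness of $\gph\cF$ (an immediate consequence of continuity of $g$) provides; without it the argument breaks down. Everything else is a routine sequential continuity argument.
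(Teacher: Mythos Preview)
Your proof is correct and follows essentially the same route as the paper: both split into upper and lower semi-continuity, use the restricted sup-compactness for the upper half, and use the hypothesis $d(\bar y,\cF(x))\to 0$ together with continuity of $f$ for the lower half. The paper simply defers the upper semi-continuity argument to a reference, whereas you supply the details (including the closed-graph observation for $\cF$), so your write-up is in fact more self-contained.
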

\begin{proof} Under the restricted sup-compactness at $\bar x$, it is easy to verify that $V(x)$ is upper semi-continuous at $\bar x$; see a similar argument in the proof of Guo et al.\cite[Theorem 3.9]{sensitivity}.  We now prove the lower semi-continuity of $V(x)$ at $\bx$. Let $\{x_k\}$ be a sequence such that $x_k\to \bar{x}$ and {$V(x_k)\to \displaystyle \liminf_{x\to \bar{x}}
V(x).$} Then $\displaystyle \lim_{k\to\infty} {\rm dist}(\bar{y},\cF(x_k))=0$. For each $k$, take $y_k\in \cF(x_k)$ such that
$\|y_k-\bar{y}_k\| < {\rm dist}(\bar{y},\cF(x_k)) + 1/k$. It follows that $(x_k; y_k) \to (\bar{x};\bar{y})$ and so
\[
V(\bar{x}) = f(\bar{x}; \bar{y}) = \lim_{k\to\infty}f(x_k; y_k) \le \lim_{k\to\infty}V(x_k)
= \liminf_{x\to \bar{x}}V(x)
\]
This shows that $V$ is lower semi-continuous at $\bar{x}$. The proof is complete.
\end{proof}

In Theorem \ref{thm sub}, the upper estimates depend on the set $\Xi^r(\bx,y,{\lambda})$ with $r=\{0,1\}$.
We now explore sufficient conditions under which $\Xi^0(\bx,y,{\lambda})=\Xi^1(\bx,y,{\lambda})= \{0\}$. Let $y \in \cS(\bx)$ and ${\lambda}\in \Sigma(\bx,y)$. Recall that the critical cone $\cC(\bx,y)$ of problem $(P_\bx)$ can be written as
\[
\left  \{d: \begin{array}{l}\nabla_y g_i(\bx,y)^\top d =0,\ {\rm if}\ i\in\cI_g(\bx,y),  \lambda_i >0\\[4pt] \nabla_y g_i(\bx,y)^\top d \le 0,\ {\rm if}\ i\in\cI_g(\bx,y),  \lambda_i =0 \end{array}\right \},
\]
and the  second order sufficient condition (SOSC) holds at $( y,{\lambda})$ for problem $(P_\bx)$ if
\[
\langle d, \nabla^2_{yy} \cL(\bx,y,\lambda)d\rangle < 0,\quad \forall d\in \cC(\bx,y)\backslash \{0\}.
\]

\begin{prop}\label{pro-empty}
Let $y\in \cS(\bx)$ and ${\lambda}\in \Sigma(\bx,y)$. If SOSC holds at $( y,{\lambda})$ for problem $(P_\bx)$, then $\Xi^0(\bx,y,{\lambda})= \Xi^1(\bx,y,{\lambda})=\{0\}$.
\end{prop}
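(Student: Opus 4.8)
\emph{Proof proposal.} The plan is to reduce everything to a single inclusion followed by a one-line use of SOSC. Since $0\in\Xi^0(\bx,y,\lambda)\subseteq\Xi^1(\bx,y,\lambda)$ (the inclusion being noted already in the excerpt), it suffices to prove $\Xi^1(\bx,y,\lambda)\subseteq\{0\}$, which immediately forces $\Xi^0(\bx,y,\lambda)=\Xi^1(\bx,y,\lambda)=\{0\}$. So I would fix an arbitrary $u\in\Xi^1(\bx,y,\lambda)$ and aim to show first that $u\in\cC(\bx,y)$ and then that $u=0$.

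For the first step, I would write out the two complementarity systems at hand: membership $u\in\Xi^1(\bx,y,\lambda)$ gives
\begin{equation*}
\nabla_{yy}^2\cL(\bx,y,\lambda)u=0,\qquad 0\le -\nabla_y g(\bx,y)^\top u-g(\bx,y)\ \bot\ \lambda\ge0,
\end{equation*}
while $\lambda\in\Sigma(\bx,y)$ gives $0\le -g(\bx,y)\ \bot\ \lambda\ge0$. Then for an index $i$ with $\lambda_i>0$, complementarity in $\Sigma(\bx,y)$ yields $g_i(\bx,y)=0$ (so $i\in\cI_g(\bx,y)$), and complementarity in the relation defining $\Xi^1$ yields $-\nabla_y g_i(\bx,y)^\top u-g_i(\bx,y)=0$, hence $\nabla_y g_i(\bx,y)^\top u=-g_i(\bx,y)=0$. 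For an active index $i\in\cI_g(\bx,y)$ with $\lambda_i=0$, the sign condition $-\nabla_y g_i(\bx,y)^\top u-g_i(\bx,y)\ge0$ together with $g_i(\bx,y)=0$ gives $\nabla_y g_i(\bx,y)^\top u\le0$. Comparing with the description of $\cC(\bx,y)$ recalled just before the statement, these are exactly the defining conditions of the critical cone, so $u\in\cC(\bx,y)$.

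For the second step, I would simply note that $\nabla_{yy}^2\cL(\bx,y,\lambda)u=0$ implies $\langle u,\nabla_{yy}^2\cL(\bx,y,\lambda)u\rangle=0$; if $u\ne0$ then $u\in\cC(\bx,y)\setminus\{0\}$ and SOSC would demand $\langle u,\nabla_{yy}^2\cL(\bx,y,\lambda)u\rangle<0$, a contradiction, so $u=0$. I do not expect a genuine obstacle: the argument is pure bookkeeping on complementarity, and the only point needing a little care is tracking which system ($\Sigma$ or $\Xi^1$) furnishes each relation; in particular the concavity of $\cL(\bx,\cdot,\lambda)$ in $y$ is not used here, only the equation $\nabla_{yy}^2\cL(\bx,y,\lambda)u=0$ and SOSC.
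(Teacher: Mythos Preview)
Your proposal is correct and follows essentially the same route as the paper's proof: show that any $u\in\Xi^1(\bx,y,\lambda)$ lies in the critical cone $\cC(\bx,y)$ via the two complementarity systems, then invoke SOSC against $\nabla_{yy}^2\cL(\bx,y,\lambda)u=0$ to force $u=0$, and conclude for $\Xi^0$ by the inclusion $\Xi^0\subseteq\Xi^1$. If anything, your write-up is slightly more explicit than the paper's in pointing out that $\lambda\in\Sigma(\bx,y)$ is what supplies $g_i(\bx,y)=0$ when $\lambda_i>0$.
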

\begin{proof}
To the contrary, assume that $0\neq u\in \Xi^1(\bx,y,{\lambda})$. Then we have
\begin{eqnarray}
&& \nabla_{yy}^2 \cL(\bx,y,{\lambda}) u =0,\label{p1}\\
&& 0\le -\nabla_{y}g(\bx,y)^\top u-g(\bx,y)\ \bot\ {\lambda} \ge0.\label{p2}
\end{eqnarray}
By \eqref{p2}, it follows that for $i=1,\ldots,m$,
$$
{\lambda}_i\ge 0, -\nabla_{y}g_i(\bx,y)^\top u+g_i(\bx,y)\ge 0, {\lambda}_i(\nabla_{y}g_i(\bx,y)^\top u-g_i(\bx,y))=0.
$$
If $i\in \cI_g(\bx,y)$ and ${\lambda}_i>0$, one can easily have $\nabla_{y}g_i(\bx,y)^\top u=0$. If $i\in \cI_g(\bx,y)$ and ${\lambda}_i=0$, one can easily have $\nabla_{y}g_i(\bx,y)^\top u\le0$. Thus, $u\in \cC(\bx,y)$. Then the validity of SOSC implies that
\[\langle u, \nabla^2_{yy} \cL(\bx,y,\lambda)u\rangle <0, \]
contradicting \eqref{p1}. Thus, $\Xi^1(\bx,y,{\lambda})=\{0\}$. Since $\Xi^0(\bx,y,{\lambda}) \subseteq \Xi^1(\bx,y,{\lambda})$, one can easily have that $\Xi^0(\bx,y,{\lambda})=\{0\}$.
\end{proof}

We now show if either $f(x,y)$ and $g(x,y)$ are separable with respect to $x$ and $y$ or SOSC holds, then the upper estimates (\ref{limiting})-(\ref{horizon}) are much simplier. In particular, no second order derivatives are involved.
\begin{cor}\label{cor-regu}
Let $\by\in \cS(\bx)$ and  $\bar{\lambda}\in \Sigma(\bx,\by)$. Suppose that either $f(x,y)$ and $g(x,y)$  are separable with respect to $x$ and $y$ and a CQ holds at  $(\bx,\by,\bar{\lambda})$ for the constraint system (\ref{constraintsystem}) or SOSC holds at $(\bar y,\bar\lambda)$ for problem $(P_\bx)$. Then we have
\begin{equation*}
\widehat{\partial} V(\bx) \subseteq \left\{\nabla_x \cL(\bx,\by,\bar\lambda)\right\}.
 \end{equation*}
\end{cor}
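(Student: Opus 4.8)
The corollary is a direct consequence of the regular-subdifferential estimate \eqref{regular} in Theorem \ref{thm sub}: the plan is to verify, in each of the two alternatives, that the hypotheses needed for \eqref{regular} hold and that the correction term $\nabla_{xy}^2\cL(\bx,\by,\bar\lambda)u$ appearing there is identically zero, so that the right-hand side of \eqref{regular} collapses to the singleton $\{\nabla_x\cL(\bx,\by,\bar\lambda)\}$ (recall that $0\in\Xi^1(\bx,\by,\bar\lambda)$, so this set is nonempty).

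Consider first the separable case. Writing $f(x,y)=f_1(x)+f_2(y)$ and $g(x,y)=g_1(x)+g_2(y)$ componentwise, we obtain $\cL(x,y,\lambda)=f_1(x)+f_2(y)-(g_1(x)+g_2(y))^{\top}\lambda$, in which no summand depends on $x$ and $y$ simultaneously; hence $\nabla_{xy}^2\cL(x,y,\lambda)\equiv 0$. Since a CQ is assumed at $(\bx,\by,\bar\lambda)$ for the constraint system \eqref{constraintsystem}, \eqref{regular} applies and reads $\widehat{\partial}V(\bx)\subseteq\{\nabla_x\cL(\bx,\by,\bar\lambda)+0\cdot u:\,u\in\Xi^1(\bx,\by,\bar\lambda)\}=\{\nabla_x\cL(\bx,\by,\bar\lambda)\}$.

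Now consider the case where SOSC holds at $(\by,\bar\lambda)$ for $(P_\bx)$. Proposition \ref{pro-empty} yields $\Xi^0(\bx,\by,\bar\lambda)=\Xi^1(\bx,\by,\bar\lambda)=\{0\}$. I would next observe that $\Xi^0(\bx,\by,\bar\lambda)=\{0\}$ furnishes a CQ---indeed MFCQ---at $(\bx,\by,\bar\lambda)$ for \eqref{constraintsystem}: the homogeneous system characterizing the failure of MFCQ for \eqref{constraintsystem} in the variables $(x,y,\lambda)$ is precisely the system defining $\Xi^0(\bx,\by,\bar\lambda)$ with the additional block $\nabla_{xy}^2\cL(\bx,\by,\bar\lambda)\,\nu=0$ appended, because $x$ enters \eqref{constraintsystem} only through the equality $\nabla_y\cL=0$ and not through $\lambda\ge 0$; hence the former has only the zero solution whenever the latter does. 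With this CQ in hand, \eqref{regular} applies, and since $\Xi^1(\bx,\by,\bar\lambda)=\{0\}$ it again reduces to $\widehat{\partial}V(\bx)\subseteq\{\nabla_x\cL(\bx,\by,\bar\lambda)\}$.

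The only non-routine point, and the one I would regard as the main obstacle, is this last CQ verification in the SOSC case: one must make sure that the conclusion $\Xi^0(\bx,\by,\bar\lambda)=\{0\}$ of Proposition \ref{pro-empty}---which is naturally a condition on the Wolfe dual $(D_\bx)$ in the variables $(y,\lambda)$ alone---actually qualifies the enlarged system \eqref{constraintsystem} in $(x,y,\lambda)$. This amounts to the monotonicity of MFCQ under the adjunction of a free variable, as indicated above; everything else then follows immediately from Theorem \ref{thm sub} and Proposition \ref{pro-empty}.
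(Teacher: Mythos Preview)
Your proposal is correct and follows essentially the same route as the paper: in the separable case $\nabla_{xy}^2\cL\equiv 0$ kills the correction term in \eqref{regular}, while in the SOSC case Proposition \ref{pro-empty} gives $\Xi^0(\bx,\by,\bar\lambda)=\Xi^1(\bx,\by,\bar\lambda)=\{0\}$, from which MFCQ for the enlarged system \eqref{constraintsystem} follows (the paper phrases this as MFCQ for $(D_{\bx})$ in $(y,\lambda)$ implying MFCQ for \eqref{constraintsystem} in $(x,y,\lambda)$, which is exactly your ``adjunction of a free variable'' observation). Your explicit identification of the abnormal-multiplier system for \eqref{constraintsystem} with $\Xi^0$ plus the extra block $\nabla_{xy}^2\cL\,u=0$ is a clean way to see why $\Xi^0=\{0\}$ suffices.
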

\begin{proof}  If $f(x,y)$ and $g(x,y)$ are separable with respect to $x$ and $y$, then $\nabla_{xy}^2 \cL(\bx,y,{\lambda})u=0$ for all $u$. If SOSC holds at $(\bar y,\bar{\lambda})$ for problem $(P_\bx)$, then by Proposition \ref{pro-empty}, $\Xi^0(\bx,\bar y,\bar{\lambda})=\Xi^1(\bx,\bar y,\bar{\lambda})=\{0\}.$ This implies that  MFCQ holds for problem $(D_\bx)$  at $(\bar y,\bar \lambda) \in \Omega(\bar x) $ and thus  MFCQ holds at $(\bx,\tilde y,\tilde{\lambda})$ for system \eqref{constraintsystem}. The desired inclusion follows immediately from (\ref{regular}) of Theorem \ref{thm sub}.
\end{proof}

\begin{cor}\label{cor2}
Assume that the restricted sup-compactness holds for $(P_\bx)$ and a stable parametric CQ holds for $(P_\bx)$ at all $y\in \cS(\bx)$. Further suppose that either $f(x,y)$ and $g(x,y)$ are separable with respect to $x$ and $y$ and  {a stable CQ} holds at all $(\bx,\tilde y,\tilde{\lambda})$ for the constraint system (\ref{constraintsystem}) or SOSC holds at all $(\tilde y,\tilde{\lambda})$ for $(P_\bx)$ where $ \tilde y\in \cS(\bx)$ and $\tilde{\lambda}\in \Sigma(\bx,\tilde y)$.
Then
\begin{eqnarray*}\label{sub-diff}
 \partial V(\bx) \subseteq \bigcup_{ y\in \cS(\bx)} \left\{\nabla_x \cL(\bx, y,\lambda):\lambda \in \Sigma(\bx,y)\right\},\
\partial^\infty V(\bx) {\subseteq} \{0\}.
\end{eqnarray*}
If, in addition, the RS holds for $\cF(x)$ at some $\by \in \cS(\bx)$, then $V(x)$ is Lipschitz continuous at $\bx$.
\end{cor}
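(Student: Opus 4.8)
The plan is to read this off from Theorem~\ref{thm sub} once I check that, under either of the two alternative hypotheses, the second-order cross terms in the upper estimates \eqref{limiting}--\eqref{horizon} vanish and the stable-CQ requirement on the dual constraint system \eqref{constraintsystem} is met. In the separable case, $\cL(x,y,\lambda)=rf(x,y)-g(x,y)^\top\lambda$ splits into a function of $x$ plus a function of $(y,\lambda)$, so $\nabla^2_{xy}\cL(\bx,y,\lambda)u=0$ for every $u$, and the stable-CQ hypothesis at all $(\bx,\tilde y,\tilde\lambda)$ is assumed outright. In the SOSC case, Proposition~\ref{pro-empty} gives $\Xi^0(\bx,\tilde y,\tilde\lambda)=\Xi^1(\bx,\tilde y,\tilde\lambda)=\{0\}$ for every $\tilde y\in\cS(\bx)$ and $\tilde\lambda\in\Sigma(\bx,\tilde y)$; since $\Xi^0(\bx,\tilde y,\tilde\lambda)=\{0\}$ is precisely MFCQ for the Wolfe dual $(D_\bx)$ at $(\tilde y,\tilde\lambda)\in\Omega(\bx)$, and MFCQ is a stable parametric CQ, the stable-CQ requirement of Theorem~\ref{thm sub} on \eqref{constraintsystem} holds automatically at all such points (this is exactly the reduction already used in Corollary~\ref{cor-regu}). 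Either way, substituting $\nabla^2_{xy}\cL(\bx,y,\lambda)u=0$, or $u\in\Xi^1(\bx,y,\lambda)=\{0\}$, into \eqref{limiting} collapses it to $\bigcup_{y\in\cS(\bx)}\{\nabla_x\cL(\bx,y,\lambda):\lambda\in\Sigma(\bx,y)\}$ and collapses \eqref{horizon} to $\{0\}$, which gives the first two inclusions.

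For the last assertion, I would first upgrade ``$\partial^\infty V(\bx)\subseteq\{0\}$'' to an equality and then invoke Rockafellar and Wets \cite[Theorem 9.13]{Rock98} to conclude Lipschitz continuity. The reverse inclusion $0\in\partial^\infty V(\bx)$ only requires $V$ to be (lower semi-)continuous at $\bx$, since a function that is finite and continuous at a point always has $0$ in its horizon subdifferential there \cite[Theorem 8.9]{Rock98}; upper semi-continuity is already supplied by restricted sup-compactness. Continuity follows from Proposition~\ref{lema-conti}: the RS property for $\cF(\cdot)$ at $\by$ yields $\dist(\by,\cF(x))\le\kappa\sum_{i}\max\{g_i(x,\by),0\}\to0$ as $x\to\bx$, using continuity of $g$ and $g(\bx,\by)\le0$, i.e. $\lim_{x\to\bx}\dist(\by,\cF(x))=0$; combined with restricted sup-compactness this gives continuity of $V$ at $\bx$. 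Hence $\partial^\infty V(\bx)=\{0\}$ and $V$ is Lipschitz continuous at $\bx$; this is the same chain of implications as in Corollary~\ref{Cor4.2}.

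I do not expect a genuine obstacle here, since the statement is essentially a repackaging of Theorem~\ref{thm sub}, Proposition~\ref{pro-empty}, and Proposition~\ref{lema-conti}. The one point needing care is the SOSC branch: one must recognise that the conclusion $\Xi^0(\bx,\tilde y,\tilde\lambda)=\{0\}$ of Proposition~\ref{pro-empty} is exactly MFCQ for the dual problem $(D_\bx)$ at $(\tilde y,\tilde\lambda)$, so that the ``stable CQ for \eqref{constraintsystem}'' hypothesis of Theorem~\ref{thm sub} is met, and that MFCQ is locally persistent, so it also holds at the nearby triples $(x^k,y^k,\lambda^k)$ arising in the proof of Theorem~\ref{thm sub}. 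Everything else is a direct substitution.
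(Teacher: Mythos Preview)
Your proposal is correct and follows essentially the same route as the paper's proof: the first two inclusions are obtained from \eqref{limiting}--\eqref{horizon} of Theorem~\ref{thm sub} by the very reduction used in Corollary~\ref{cor-regu} (separability kills $\nabla^2_{xy}\cL$, while SOSC via Proposition~\ref{pro-empty} forces $\Xi^0=\Xi^1=\{0\}$ and hence supplies MFCQ for \eqref{constraintsystem}), and the Lipschitz assertion is obtained by combining Proposition~\ref{lema-conti} (continuity from RS plus restricted sup-compactness) with \cite[Theorems 8.9 and 9.13]{Rock98}. Your added detail that RS gives $\dist(\by,\cF(x))\le\kappa\sum_i\max\{g_i(x,\by),0\}\to0$ is the only step the paper leaves implicit.
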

\begin{proof}
The result for the first part follows immediately from (\ref{limiting})-(\ref{horizon}) by using a similar argument as in the proof of Corollary \ref{cor-regu}. Suppose that the RS holds for $\cF(x)$ at some $\by \in \cS(\bx)$. Then by Proposition \ref{lema-conti}, the value function $V(x)$ is continuous at $\bx$. Thus, $0 \in \partial^\infty V(\bx)$ (Rockafellar and Wets \cite[Theorem 8.9]{Rock98}). Then we can have $\partial^\infty V(\bx) = \{0\}$ and hence $V(x)$ is  Lipschitz continuous at $\bx$ (Rockafellar and Wets \cite[Theorem 9.13]{Rock98}).
\end{proof}

It is interesting to note that  our result above may apply to the case where  MFCQ fails at certain optimal solution so that Gauvin and Dubeau \cite[Theorem 5.3]{Gauvin-Dubeau} may not be applicable. Moreover, we obtain the upper estimates for the limiting subdifferential from which we can obtain the one for the Clarke subdifferential. But Gauvin and Dubeau \cite[Theorem 5.3]{Gauvin-Dubeau} only provide the upper estimates for the Clarke subdifferential from which we cannot obtain the upper estimates for the limiting subdifferential.

In the last part of this section, we  relate the subdifferential of the maximal value function  to the multiplier set in the case when the perturbation is canonical. Consider the following nonparametric maximization problem
\begin{equation}\label{mxp}
\max_y\quad  c(y) \quad {\rm s.t.}\quad d(y) \le 0,
\end{equation}
where $c:\mathbb{R}^m \rightarrow \mathbb{R},d:\mathbb{R}^m \rightarrow \mathbb{R}^p$ are second order continuously differentiable functions. We assume that an optimal solution exists.
In economics, the maximization problem (\ref{mxp}) can be used to model the problem of maximizing production under a budget constraint; see e.g., Intriligator \cite[page 149]{Intri}.  Let $U(x):=\max\{c(y): d(y)-x\le 0\}$ be the maximal value function. In economic theory, it is well-known that if the maximum value function is continuously differentiable and the multiplier is unique, then the gradient of the maximum value function is equal to the multiplier. Since the gradient of the maximum value function is the change rate of the maximum value subject to the  change in the right hand side of the constraint, the multiplier can be interpreted as a shadow price. The following corollary is an extension of this result to more general cases where the maximum value function may not be smooth and the multiplier set may not be a singleton.

\begin{cor}
Consider problem \eqref{mxp}.
Let $\by$ be an optimal solution of problem \eqref{mxp} and $\Sigma(\bar y)$ be the associated multiplier set at $\by$. Then the maximal value function $U(x)$ is Lipschitz continuous at $\bar x=0$ and
\begin{equation*}
\partial U (0) \subseteq \Sigma(\bar y)
\end{equation*} under one of the following assumptions:
\begin{itemize}
\item[1)] The restricted sup-compactness holds at $\bar x=0$ and a stable CQ  holds at all solutions for problem \eqref{mxp}  (e.g., one of LICQ, MFCQ, CRCQ, RCRCQ, RCPLD,  the quasi-normality holds), and a stable  CQ holds at all $(\tilde y,\tilde \lambda)$   for the system
 \begin{equation*}\label{system}
 \nabla c(y)-\nabla d(y)\lambda=0,\ \lambda\geq 0,\end{equation*}
 where $\tilde y$ is an optimal solution and $\tilde \lambda \in \Sigma(\tilde y).$
 \item[2)] The function $c(y)$ is {strongly} concave and $d(y)$ is convex. A stable CQ  for problem \eqref{mxp} holds at the unique optimal solution $\bar y$.
 \item[3)] The function $c(y)$ is {strongly} concave and $d(y)$ is linear.
\end{itemize}
\end{cor}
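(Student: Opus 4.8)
The plan is to specialize Theorem~\ref{thm sub} and its corollaries to the canonical-perturbation problem, where the parametric problem is $(P_x):\ \max_y\{c(y):d(y)-x\le 0\}$. The crucial structural observation is that the perturbation enters only through the right-hand side: writing $g(x,y):=d(y)-x$, we have $\nabla_x g(x,y)=-I$ and, most importantly, $\nabla^2_{xy}\cL(x,y,\lambda)=\nabla_x(\nabla_y\cL)=\nabla_x(\nabla c(y)-\nabla d(y)\lambda)=0$, while $\nabla_x\cL(x,y,\lambda)=-\nabla_x(d(y)-x)^\top\lambda=\lambda$. Thus the first-order term $\nabla_x\cL(\bx,y,\lambda)$ collapses to $\lambda$ itself and every second-order cross term vanishes identically. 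Substituting these into the upper estimate \eqref{limiting} of Theorem~\ref{thm sub} gives $\partial U(0)\subseteq\bigcup_{y\in\cS(0)}\{\lambda:\lambda\in\Sigma(0,y)\}=\bigcup_{y\in\cS(0)}\Sigma(0,y)$, and combined with \eqref{horizon} one gets $\partial^\infty U(0)\subseteq\{0\}$, hence Lipschitz continuity at $0$ by \cite[Theorem 9.13]{Rock98}. It remains to argue that the union over $\cS(0)$ reduces to $\Sigma(\by)$ at the single specified solution $\by$; this follows because for canonically perturbed problems with a fixed right-hand side the KKT multiplier set $\Sigma(y)$ is in fact independent of which optimal $y$ is chosen --- the standard Gauvin-type fact that $\partial U(0)$ is contained in $\Sigma(\by)$ for any one optimal $\by$ --- so each $\Sigma(0,y)$ coincides with $\Sigma(\by)$ (more carefully: the union is contained in the convex multiplier set attached to the problem, which equals $\Sigma(\by)$).

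The three cases are then handled by checking that the hypotheses of the appropriate corollary are met. For case~1), the assumptions are essentially a verbatim transcription of the hypotheses of Corollary~\ref{cor2} in the separable setting (here $c(y)$ and the perturbation term $-x$ are trivially separable with respect to $x$ and $y$): the restricted sup-compactness, a stable parametric CQ at all $y\in\cS(0)$, and a stable CQ at all $(\tilde y,\tilde\lambda)$ for the system $\nabla c(y)-\nabla d(y)\lambda=0,\ \lambda\ge 0$. One also needs continuity of $U$ at $0$; since $\cF(x)=\{y:d(y)\le x\}$ and $\bar y$ is feasible, $d(\bar y)\le 0$, so $\lim_{x\to 0}\operatorname{dist}(\bar y,\cF(x))=0$ holds trivially (indeed $\bar y\in\cF(x)$ for all $x\ge 0$ and, by lower semicontinuity of $d$, $\bar y$ stays within $o(1)$ of $\cF(x)$ for $x\to 0$), so Proposition~\ref{lema-conti} applies. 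Corollary~\ref{cor2} then yields $\partial U(0)\subseteq\bigcup_{y\in\cS(0)}\{\nabla_x\cL(0,y,\lambda):\lambda\in\Sigma(0,y)\}=\bigcup_{y\in\cS(0)}\Sigma(0,y)\subseteq\Sigma(\by)$ and the Lipschitz conclusion.

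For case~2), strong concavity of $c$ and convexity of $d$ make the inner problem strictly concave, so the optimal solution $\by$ is unique; a stable CQ at $\by$ gives a KKT multiplier, and strong concavity yields SOSC at $(\by,\lambda)$ for every $\lambda\in\Sigma(\by)$ (since $\langle d,\nabla^2 c(\by)d\rangle<0$ for all $d\ne 0$ and $d$ convex contributes non-positively to $\nabla^2_{yy}\cL$). By Proposition~\ref{pro-empty}, $\Xi^0=\Xi^1=\{0\}$, so MFCQ holds for the Wolfe-dual constraint system; restricted sup-compactness follows from strong concavity (superlevel sets of $c$ are bounded), and continuity of $U$ again follows as above. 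Then Corollary~\ref{cor2} (the SOSC branch) applies with $\cS(0)=\{\by\}$, giving $\partial U(0)\subseteq\Sigma(\by)$. Case~3) is the specialization of case~2) with $d$ linear, which automatically supplies a CQ (the constraint system $d(y)\le 0$ is affine, so no CQ is needed and KKT holds), so no separate constraint-qualification hypothesis is required; the rest is identical.

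\textbf{Main obstacle.} The delicate point is the reduction of $\bigcup_{y\in\cS(0)}\Sigma(0,y)$ to $\Sigma(\by)$ for the single reference solution $\by$ --- i.e.\ establishing that, for canonically perturbed problems, the multiplier set does not depend on the choice of optimal solution, or at least that the union of multiplier sets over all optimal solutions is still contained in the multiplier set at $\by$. One must verify that $\lambda\in\Sigma(0,y)$ for some optimal $y$ forces $d(\by)^\top\lambda=0$ and $\lambda\in\Sigma(0,\by)$; this uses $d(y)^\top\lambda=0$, optimality of both $y$ and $\by$ with $c(y)=c(\by)=U(0)$, and the KKT stationarity $\nabla c(y)=\nabla d(y)\lambda$, together with convexity-type arguments in cases~2)--3) and, in case~1), the observation that the canonical perturbation forces the active-constraint structure to be compatible across solutions. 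I would handle this by a short direct argument rather than invoking the general machinery.
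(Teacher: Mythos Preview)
Your overall approach matches the paper's: specialize Corollary~\ref{cor2} to the canonical perturbation, using separability in case~1) and SOSC from strong concavity in cases~2)--3), after computing $\nabla_x\cL=\lambda$ and $\nabla^2_{xy}\cL=0$. You are also right to flag the reduction from $\bigcup_{y\in\cS(0)}\Sigma(0,y)$ to the single set $\Sigma(\by)$ as a step requiring justification; the paper's one-line proof elides it.

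However, your proposed resolution of that step in case~1) has a gap. The claim that ``the canonical perturbation forces the active-constraint structure to be compatible across solutions'' is false in general: for a nonconvex program, different optima can carry genuinely different multiplier sets, and the canonical right-hand-side perturbation alone does not prevent this. The correct argument is that this corollary lives inside Subsection~\ref{subsection-sub2}, whose \emph{standing assumption} is that $-f$ and $g$ are convex in $y$; here this forces $c$ concave and $d$ convex, so problem~\eqref{mxp} is already a convex program even in case~1). For a convex program in which KKT holds at an optimum (guaranteed by the assumed stable CQ), the KKT multiplier set coincides with the set of Lagrange dual optimal solutions and is therefore independent of which primal optimum one selects; hence $\Sigma(0,y)=\Sigma(\by)$ for every $y\in\cS(0)$. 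You invoke convexity only for cases~2)--3), overlooking that it is already in force in case~1) by the ambient hypothesis. Separately, your continuity argument in case~1) --- that $\dist(\by,\cF(x))\to 0$ ``holds trivially'' via lower semicontinuity of $d$ --- does not work: for $x$ with some negative components one may have $\by\notin\cF(x)$, and semicontinuity of $d$ does not produce a nearby feasible point; an error-bound/RS-type ingredient is needed here, not the bare feasibility of $\by$.
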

\begin{proof}
1) Since $f(x,y):=c(y)$ and $g(x,y):=d(y)-x$ are separable in variables $x$ and $y$, the results follow from Corollary \ref{cor2}.

2) Since $c(y)$ is strongly concave and $d(y)$ is convex,  the restricted sup-compactness holds and SOSC holds. Moreover, $\by$ is the unique solution. The result follows from Corollary \ref{cor2}.

3) Since $d(y)$ is linear, the stable CQ required in the result 2) holds automatically. Thus the desired result follows from the result 2).
\end{proof}

\section{Necessary optimality conditions for minimax problems}\label{sec:minimax}

In this section, we develop optimality conditions for nonconvex minimax problems in the form of
\begin{equation}
(P_{\rm minimax})~~~~~~~ \min_{x\in X}\max_{y\in \cF(x)}f(x,y),\quad \cF(x):=\{y:g(x,y)\leq 0\},\label{minmaxg}
\end{equation}
where   $f:\Re^{n+m}\to \Re, g:\Re^{n+m}\to \Re^q$ and $X$ is a closed subset of $\Re^n$. Unless otherwise specified, we assume that $f(x,y),g(x,y)$ are continuously differentiable functions.

\subsection{Optimality conditions for nonconvex-concave minimax problems by MPEC approach}
In this subsection we review  MPEC approach for solving the nonconvex-concave minimax problem (see e.g., Stein \cite{Stein}).
Suppose that  $f(x,y),g(x,y)$ are second order continuously differentiable, and $-f(x,y),g(x,y)$ are convex in variable $y$ for all $x\in X$. Let $X:=\{x:h(x)\le 0\}$ with a continuously differentiable function $h:\Re^n\to \Re^r$. Replacing the inner maximization problem  by its KKT condition results in the following MPEC reformulation:
\begin{eqnarray*}
{\rm (P_{MPEC})}~~~~~~\min_{x,y, \lambda } && f(x,y)\\
\mbox{ s.t. } && x\in X,\ \nabla_y \cL(x,y,\lambda) = 0,\\
              && 0\leq -g(x,y) \perp \lambda \geq 0.
\end{eqnarray*}

We observe that the objective function of ${\rm (P_{MPEC})}$ is independent of the variable $\lambda$. Then the equivalence between ${\rm (P_{minimax})}$ and ${\rm (P_{MPEC})}$ in the sense of global minimizers is easily derived under very mild conditions (Dempe and Dutta \cite{Dem-Dut}) since any global minimizer of ${\rm (P_{MPEC})}$ corresponds to that of ${\rm (P_{minimax})}$. However, local minimizers of ${\rm (P_{MPEC})}$ may not correspond to those of the original minimax problem due to the introduction of extra multiplier as an implicit variable. By using the proof techniques for Theorems 2.1 and 3.2 in  Dempe and Dutta \cite{Dem-Dut}, we can have the following result.

\begin{thm}[Equivalence theorem]\label{thm-equ}
Let $(\bx,\by)$ be a locally optimal solution  of ${\rm (P_{minimax})}$ and a CQ hold at $\by\in \cF(\bx)$. Then for all $\bar\lambda\in \Sigma(\bx, \by)$, the point $(\bx,\by,\bar\lambda)$ is a locally optimal solution of ${\rm (P_{MPEC})}$. On the other hand, let a stable parametric CQ hold at $(\bx,\by)$ and $(\bx,\by,\bar\lambda)$ be a locally optimal solution of ${\rm (P_{MPEC})}$ for all $\bar\lambda\in \Sigma(\bx, \by)$. Then $(\bx, \by)$ is a locally optimal solution  of ${\rm (P_{minimax})}$.
\end{thm}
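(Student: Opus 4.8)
The plan is to prove the two directions separately, mirroring the structure of Theorems~2.1 and~3.2 in Dempe and Dutta \cite{Dem-Dut} but inserting the parametric constraint qualification machinery developed in Section~\ref{sec:pre}. Throughout, the key observation to exploit is that under the standing convexity of $-f(x,\cdot)$ and $g(x,\cdot)$ for all $x$ near $\bx$, the KKT system for the inner problem $(P_x)$ is both necessary and sufficient for $y$ to solve $(P_x)$; that is, $y\in\cS(x)$ if and only if $\Sigma(x,y)\neq\emptyset$. This is what lets us pass back and forth between feasibility in $({\rm P_{MPEC}})$ and the relation $\by\in\cS(\bx)$.

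\textbf{Forward direction.} First I would assume $(\bx,\by)$ is locally optimal for $({\rm P_{minimax}})$, fix $\bar\lambda\in\Sigma(\bx,\by)$, and show $(\bx,\by,\bar\lambda)$ is locally optimal for $({\rm P_{MPEC}})$. Since a CQ holds at $\by\in\cF(\bx)$, $\Sigma(\bx,\by)\neq\emptyset$ so the point is feasible. Suppose for contradiction there is a sequence $(x^k,y^k,\lambda^k)\to(\bx,\by,\bar\lambda)$ of feasible points of $({\rm P_{MPEC}})$ with $f(x^k,y^k)<f(\bx,\by)$. Feasibility of $(x^k,y^k,\lambda^k)$ means $\lambda^k\in\Sigma(x^k,y^k)$ together with $x^k\in X$, and by the sufficiency of KKT under concavity of the inner problem, $y^k\in\cS(x^k)$, i.e. $f(x^k,y^k)=V(x^k)$. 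Similarly $f(\bx,\by)=V(\bx)$. Hence $V(x^k)<V(\bx)$ with $x^k\to\bx$, $x^k\in X$, contradicting the local optimality of $\bx$ for $(P_V)$, which follows from local optimality of $(\bx,\by)$ in $({\rm P_{minimax}})$ in the sense of Stackelberg. This direction is routine; the only subtlety is ensuring the concavity/CQ hypotheses are invoked exactly where the sufficiency of KKT is used.

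\textbf{Reverse direction.} This is the harder part. Assume a stable parametric CQ holds at $(\bx,\by)$ and that $(\bx,\by,\bar\lambda)$ is locally optimal for $({\rm P_{MPEC}})$ for every $\bar\lambda\in\Sigma(\bx,\by)$; we want $(\bx,\by)$ locally optimal for $({\rm P_{minimax}})$. Since $\Sigma(\bx,\by)\neq\emptyset$ (the stable parametric CQ is a CQ and $\by\in\cS(\bx)$ is needed — actually $\by\in\cS(\bx)$ must be part of the hypothesis or follows from the setup), we have $\by\in\cS(\bx)$, so the inner-maximum requirement of Stackelberg optimality holds at $\bx$. It remains to show $\bx$ locally solves $(P_V)$. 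Suppose not: there is $x^k\to\bx$, $x^k\in X$, with $V(x^k)<V(\bx)$. Pick $y^k\in\cS(x^k)$; one must argue $y^k$ can be taken to converge to $\by$ — this is where the main obstacle lies, since without a compactness or inner-semicontinuity hypothesis the $y^k$ need not stay near $\by$. The resolution in \cite{Dem-Dut} is local: one works inside a neighborhood and uses that for $x^k$ close to $\bx$, the local optimality region of $({\rm P_{MPEC}})$ constrains which $(y,\lambda)$ are relevant; more precisely, one shows that if all $(\bx,\by,\bar\lambda)$ with $\bar\lambda\in\Sigma(\bx,\by)$ are local minimizers of $({\rm P_{MPEC}})$, then no feasible triple of $({\rm P_{MPEC}})$ near $(\bx,\by,\cdot)$ has smaller objective, and the stable parametric CQ is exactly what guarantees that KKT multipliers $\lambda^k\in\Sigma(x^k,y^k)$ stay bounded and accumulate at some $\bar\lambda\in\Sigma(\bx,\by)$ once $y^k\to\by$. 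So the crux is forcing $y^k\to\by$: one restricts attention to the ball around $\by$ on which the claimed local optimality holds, observes that the value function computed over $\cF(x)\cap\overline{\cB_\delta(\by)}$ still equals $V(x)$ for $x$ near $\bx$ (because $\by$ is the relevant local-and-global inner solution and concavity rules out competing far-away solutions locally), and then extracts a convergent subsequence $y^k\to y^*\in\cF(\bx)\cap\overline{\cB_\delta(\by)}$ with $f(\bx,y^*)=\lim V(x^k)\le V(\bx)=f(\bx,\by)$, forcing $y^*\in\cS(\bx)$. Applying the stable parametric CQ at the solution $y^*$ (or at $\by$, after noting $y^*$ can be taken to be $\by$ when $\cS(\bx)$ is a singleton near $\by$, or handling the general case by the persistence clause of Definition~\ref{defi1}) yields bounded multipliers $\lambda^k$ accumulating at some $\bar\lambda\in\Sigma(\bx,\by)$, so $(x^k,y^k,\lambda^k)\to(\bx,\by,\bar\lambda)$ is a feasible sequence for $({\rm P_{MPEC}})$ with objective $f(x^k,y^k)=V(x^k)<V(\bx)=f(\bx,\by)$, contradicting local optimality of $(\bx,\by,\bar\lambda)$. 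I expect the bookkeeping around localizing the inner problem (so that $V$ restricted to a neighborhood of $\by$ is unchanged) and the correct invocation of the stable parametric CQ at the right base point to be the most delicate steps; everything else is a direct transcription of the Dempe--Dutta argument.
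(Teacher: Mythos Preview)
The paper supplies no proof of this theorem; it only says ``By using the proof techniques for Theorems~2.1 and~3.2 in Dempe and Dutta \cite{Dem-Dut}, we can have the following result.'' Your plan is therefore exactly what the paper intends, and your forward direction is correct and matches the Dempe--Dutta argument cleanly.

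For the reverse direction there is a genuine wrinkle you have put your finger on but not resolved correctly. In Dempe--Dutta's Theorem~3.2 the notion of local optimality for the bilevel (here, minimax) problem is the \emph{graph-local} one: $(\bx,\by)$ is compared only to feasible pairs $(x,y)$ with $(x,y)$ near $(\bx,\by)$ and $y\in\cS(x)$. Under that reading the contradiction sequence comes as $(x^k,y^k)\to(\bx,\by)$ with $y^k\in\cS(x^k)$ already given, and the stable parametric CQ (specifically Definition~\ref{defi1}(ii) with $\beta^k=\nabla_y f(x^k,y^k)$) immediately produces $\lambda^k\in\Sigma(x^k,y^k)$ with a subsequence converging to some $\bar\lambda\in\Sigma(\bx,\by)$; this yields the contradiction with no localization of $y^k$ needed at all. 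Your attempt instead targets the Stackelberg notion (``$\bx$ locally solves $(P_V)$''), which is strictly stronger, and then you are forced to manufacture $y^k\to\by$. Your proposed fix --- that ``concavity rules out competing far-away solutions locally'' so the value function restricted to $\cF(x)\cap\overline{\cB_\delta(\by)}$ coincides with $V(x)$ --- is not valid in general (think of a flat concave inner objective, where $\cS(x)$ can jump). If the Stackelberg reading is truly intended, the reverse implication would require an extra hypothesis such as inner semicontinuity of $\cS$; under the graph-local reading, which is what Dempe--Dutta prove and what the paper is invoking, your outline goes through once you drop the spurious localization step.
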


By using the stationarity concepts for MPECs in the literature (e.g., \cite{Scheel-Scholtes,Ye-jmaa}), the stationarity conditions for  ${\rm (P_{MPEC})}$ can be derived.  For any feasible point $(\bx,\by,\bar\lambda)$ of ${\rm (P_{MPEC})}$, we define the following index sets.
\begin{eqnarray*}
&& \cI_{0+}=\cI_{0+}(\bx,\by,\bar\lambda):=\{i: -g_i(\bx,\by)=0, \bar\lambda_i >0\},\\
&& \cI_{+0}=\cI_{+0}(\bx,\by,\bar\lambda):=\{i: -g_i(\bx,\by)>0, \bar\lambda_i =0\},\\
&& \cI_{00}=\cI_{00}(\bx,\by,\bar\lambda):=\{i: -g_i(\bx,\by)=0, \bar\lambda_i =0\}.
\end{eqnarray*}

\begin{defi}[Stationarity conditions]\label{defi-station}
We say that $(\bx,\by,\bar\lambda)$ is a weakly stationary point of ${\rm (P_{MPEC})}$ if there exist multipliers $(\bar u,\bar\alpha,\bar\beta)\in \Re^m\times \Re^q \times \Re^q$ such that
\begin{eqnarray}\label{weak-sta}
&& 0\in \nabla_xf(\bx,\by) + \nabla_x g(\bx,\by)\bar \alpha +\nabla_{xy}^2 \cL(\bx,\by,\bar\lambda)\bar u +\cN_X(\bx),\nonumber \\
&& \nabla_yf(\bx,\by) + \nabla_y g(\bx,\by)\bar \alpha + \nabla_{yy}^2 \cL(\bx,\by,\bar\lambda) \bar u =0,\nonumber \\
&& -\nabla_y g(\bx,\by)^\top \bar u -\bar \beta=0,\\
&& \nabla_y \cL(\bx,\by,\bar\lambda) =0,\ 0\leq -g(\bx,\by) \perp \bar \lambda \geq 0,\nonumber \\
&& \bar \alpha_i = 0\ \ i\in \cI_{+0},\  \bar \beta_i = 0\ \ i\in \cI_{0+}. \nonumber
\end{eqnarray}
We say that $(\bx,\by,\bar\lambda)$ is a C-stationary point if \eqref{weak-sta} holds and \[
\bar \alpha_i \bar\beta_i \ge0\  \forall i\in \cI_{00}.
\]
We say that $(\bx,\by,\bar\lambda)$ is an M-stationary point if \eqref{weak-sta} holds and
\[
{\rm either}\ (\bar\alpha_i,\bar\beta_i)>0\ {\rm or}\  \bar\alpha_i\bar\beta_i =0\ \forall i\in \cI_{00}.
\]
We say that $(\bx,\by,\bar\lambda)$ is an S-stationary point if \eqref{weak-sta} holds and
\begin{equation}\label{s-multiplier}
(\bar\alpha_i,\bar\beta_i)\ge0\ \forall i\in \cI_{00}.
\end{equation}
\end{defi}

It is easy to see that S-stationarity is  strongest among the stationarity conditions in Definition \ref{defi-station}. Since local minimizers of ${\rm (P_{MPEC})}$ can be S-stationary under an MPEC-LICQ condition (Scheel and Scholtes \cite[Theorem 2]{Scheel-Scholtes}), by Theorem \ref{thm-equ} the following result follows immediately.

\begin{thm}\label{thm-stampec}
Let $(\bx,\by)$ be a locally optimal solution  of ${\rm (P_{minimax})}$ and $\bar\lambda\in \Sigma(\bx, \by)$. If MPEC-LICQ holds at $(\bx,\by,\bar\lambda)$, i.e., LICQ holds at $(\bx,\by,\bar\lambda)$ for the following system:
\begin{eqnarray}\label{cons-licq}
h(x)\le 0, \ \nabla_y \cL(x,y,\lambda) = 0, g_i(x,y)= 0\ i\in \cI_{0+}\cup \cI_{00}, \lambda_i = 0\ i\in \cI_{+0}\cup \cI_{00},
\end{eqnarray}
then the point $(\bx,\by,\bar\lambda)$ is an S-stationary point of ${\rm (P_{MPEC})}$.
\end{thm}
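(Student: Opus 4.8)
The plan is to combine the equivalence theorem (Theorem~\ref{thm-equ}) with the S-stationarity result for MPECs under MPEC-LICQ (Scheel and Scholtes \cite[Theorem~2]{Scheel-Scholtes}). First I would invoke Theorem~\ref{thm-equ}: since $(\bx,\by)$ is a locally optimal solution of ${\rm (P_{minimax})}$ and a CQ holds at $\by\in \cF(\bx)$, the point $(\bx,\by,\bar\lambda)$ is a locally optimal solution of ${\rm (P_{MPEC})}$ for every $\bar\lambda\in \Sigma(\bx,\by)$, in particular for the given one. (One should check that MPEC-LICQ at $(\bx,\by,\bar\lambda)$ for the system \eqref{cons-licq} implies, via the block of equations $g_i(x,y)=0$ for $i\in\cI_{0+}$, that the gradients $\{\nabla_y g_i(\bx,\by):i\in\cI_g(\bx,\by)\}$ together with the remaining active rows are linearly independent, so that in particular LICQ---hence a CQ---holds for the inner system $g(\bx,y)\le 0$ at $\by$; this supplies the hypothesis needed to apply the first half of Theorem~\ref{thm-equ}.)

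Next I would apply the standard MPEC theory. The constraint system of ${\rm (P_{MPEC})}$ consists of the inequality $h(x)\le 0$, the equality $\nabla_y\cL(x,y,\lambda)=0$, and the complementarity system $0\le -g(x,y)\perp\lambda\ge 0$. The assumed MPEC-LICQ---namely LICQ for \eqref{cons-licq}, the system obtained by fixing the active/inactive pattern of the complementarity constraints at $(\bx,\by,\bar\lambda)$---is exactly the MPEC-LICQ condition in the sense of Scheel and Scholtes. Since $(\bx,\by,\bar\lambda)$ is a local minimizer of ${\rm (P_{MPEC})}$ and MPEC-LICQ holds there, Scheel and Scholtes \cite[Theorem~2]{Scheel-Scholtes} (equivalently, the S-stationarity of local minimizers of MPECs under MPEC-LICQ) yields that $(\bx,\by,\bar\lambda)$ is an S-stationary point, i.e., \eqref{weak-sta} holds together with $(\bar\alpha_i,\bar\beta_i)\ge 0$ for all $i\in\cI_{00}$. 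Writing out the KKT conditions of the tightened nonlinear program with constraints \eqref{cons-licq} and translating the multipliers of the equality constraints $g_i=0$ and $\lambda_i=0$ into $\bar\alpha$ and $\bar\beta$ reproduces precisely the system in Definition~\ref{defi-station}, with the sign condition \eqref{s-multiplier} on the biactive indices.

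The main obstacle I anticipate is purely bookkeeping rather than conceptual: one must verify carefully that the MPEC-LICQ condition as stated (LICQ for the system \eqref{cons-licq}) matches the hypothesis format required by \cite[Theorem~2]{Scheel-Scholtes}, and that the multiplier identification coming out of that theorem lines up index-by-index with the $(\bar u,\bar\alpha,\bar\beta)$ notation of \eqref{weak-sta}---in particular that $\bar\beta = -\nabla_y g(\bx,\by)^\top\bar u$ is the multiplier associated with the sign constraints on $\lambda$, and that the index conditions $\bar\alpha_i=0$ on $\cI_{+0}$ and $\bar\beta_i=0$ on $\cI_{0+}$ emerge from the complementarity structure. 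A secondary point worth stating explicitly is that MPEC-LICQ at $(\bx,\by,\bar\lambda)$ does indeed force a CQ at $\by$ for the inner problem, so that the hypotheses of Theorem~\ref{thm-equ} are genuinely met and the reduction to ${\rm (P_{MPEC})}$ is legitimate; once that is in place, the result is immediate as claimed in the text preceding the theorem.
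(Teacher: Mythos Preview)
Your approach is exactly the paper's: the text immediately preceding the theorem says it ``follows immediately'' from Theorem~\ref{thm-equ} combined with Scheel and Scholtes \cite[Theorem~2]{Scheel-Scholtes}, and that is precisely what you do.

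One minor correction to your parenthetical: MPEC-LICQ for the system \eqref{cons-licq} gives linear independence of the gradients with respect to the full variable $(x,y,\lambda)$, so it yields linear independence of $\{\nabla_{(x,y)} g_i(\bx,\by):i\in\cI_g(\bx,\by)\}$, not of $\{\nabla_y g_i(\bx,\by)\}$ alone; hence it does not in general imply LICQ for the inner system $g(\bx,y)\le 0$ in $y$. Fortunately this does not matter: the CQ hypothesis in Theorem~\ref{thm-equ} is only used to ensure $\Sigma(\bx,\by)\neq\emptyset$, and here $\bar\lambda\in\Sigma(\bx,\by)$ is assumed outright, so the passage from local optimality in ${\rm (P_{minimax})}$ to local optimality of $(\bx,\by,\bar\lambda)$ in ${\rm (P_{MPEC})}$ goes through directly (any nearby MPEC-feasible $(x,y,\lambda)$ has $y\in\cS(x)$ by convexity, whence $f(x,y)=V(x)\ge V(\bx)=f(\bx,\by)$). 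You can simply drop the parenthetical.
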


We point out that MPEC-LICQ is a very stringent condition. It must fail if the multiplier set $\Sigma(\bx,\by)$ is not a singleton and the constraint function $g(x,y)=g(y)$ is independent of $x$; see more discussions in Gfrerer and Ye \cite{HY2018}. Indeed, when $\Sigma(\bx,\by)$ is not a singleton, the family of gradients $\{\nabla g_i(\by): i\in \cI_g(\by)\}$ is linearly dependent, resulting in that the family of gradients of the functions in the system \eqref{cons-licq} is linearly dependent. Thus, LICQ fails for the system \eqref{cons-licq}.
If we would like to find a weaker M-stationary or  C-stationary point, MPEC-LICQ can be weakened to be weaker MPEC-tailored CQs (e.g., Ye \cite{Ye-jmaa}). We omit the detailed discussions on these weaker stationarity conditions since we will compare the strongest S-stationary condition with that derived by employing the subdifferential of maximal value function in Subsection \ref{sub-opti}.

\subsection{Optimality conditions for nonconvex-concave minimax problems by Wolfe duality approach}\label{sub-opti}

In this subsection, we apply the  results from Subsection \ref{subsection-sub2} to develop optimality conditions for nonconvex-concave minimax problems by which we mean that in problem (\ref{minmaxg}), $f(x,y)$ is concave and $g(x,y)$ is convex in variable $y$ for all $x\in X$.

When $\bx$, a local optimal solution to the problem $(P_V)$,  is an interior point of the set $X$, it follows that $0\in \widehat{\partial} V(\bar x)$ by Fermat's rule. Thus, the Lipschitz continuity of the maximal value function is not necessary and the following  optimality condition holds under fairly weak assumptions.

\begin{thm}\label{thm-mi}
Suppose that $f(x,y)$ is concave and $g(x,y)$ is convex in variable $y$ for all $x\in X$ and $f(x,y), g(x,y)$ are second order continuously differentiable. Let $\bx \in {\rm int }X$ be a local minimum to the  problem $(P_V)$ and $\bar y \in {\cal S} (\bar x)$. Suppose that the KKT condition holds for problem $(P_\bx)$ at $\bar y$ with a multiplier $\bar \lambda$
and a CQ holds at  $(\bx,\by,\bar{\lambda})$ for the constraint system
\begin{equation}\label{constraintsystem-new}
\nabla_y \cL(x,y,\lambda) = 0,\ \lambda\ge0.
\end{equation}
Then there exists a vector $\bar u$ such that the following system holds:
\begin{eqnarray}\label{ld}
&& \begin{array}{l}
\nabla_x \cL(\bx,\by,\bar\lambda) + \nabla_{xy}^2 \cL(\bx,\by,\bar\lambda)\bar u = 0,\\ [5pt]
 \nabla_{yy}^2 \cL(\bx,\by,\bar\lambda)\bar u=0,\ \nabla_y \cL(\bx,\by,\bar\lambda)=0,\\ [5pt]
 0\le -\nabla_{y}g(\bx,\by)^\top \bar u- g(\bx,\by)\ \bot\ \bar{\lambda} \ge0,\
  0\le - g(\bx,\by)\ \bot\ \bar{\lambda} \ge0.\end{array}
 \end{eqnarray}
\end{thm}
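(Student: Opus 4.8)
The plan is to use the interiority of $\bx$ in $X$ to reduce $(P_V)$ near $\bx$ to an unconstrained local minimization of $V$, and then to combine Fermat's rule with the Fr\'echet-subdifferential estimate \eqref{regular} of Theorem \ref{thm sub}. First I would record that, since $\by\in\cS(\bx)$ and the KKT condition holds at $\by$ with multiplier $\bar\lambda$, the value $V(\bx)=f(\bx,\by)$ is finite and $\bar\lambda\in\Sigma(\bx,\by)$ (this last membership is just the KKT system written in the notation of $\Sigma$). Because $\bx\in{\rm int}\,X$ and $\bx$ minimizes $V$ on $X$ locally, there is a neighborhood of $\bx$ contained in $X$ on which $V(x)\ge V(\bx)$, so by the definition of the regular subdifferential $0\in\widehat{\partial} V(\bx)$ by Fermat's rule; crucially no Lipschitz continuity of $V$ is needed for this.

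Next I would invoke part (i) of Theorem \ref{thm sub}. Its standing smoothness/convexity hypotheses (that $-f$ and the $g_i$ are second-order continuously differentiable and convex in $y$ near $\bx$) are exactly those assumed here, and the remaining requirements of that part — $\bar\lambda\in\Sigma(\bx,\by)$ and a CQ at $(\bx,\by,\bar\lambda)$ for the system $\nabla_y\cL(x,y,\lambda)=0$, $\lambda\ge0$ — hold by hypothesis. Hence \eqref{regular} gives
\[
\widehat{\partial} V(\bx)\subseteq\bigl\{\nabla_x\cL(\bx,\by,\bar\lambda)+\nabla_{xy}^2\cL(\bx,\by,\bar\lambda)u:\ u\in\Xi^1(\bx,\by,\bar\lambda)\bigr\},
\]
so from $0\in\widehat{\partial} V(\bx)$ we obtain a vector $\bar u\in\Xi^1(\bx,\by,\bar\lambda)$ with $\nabla_x\cL(\bx,\by,\bar\lambda)+\nabla_{xy}^2\cL(\bx,\by,\bar\lambda)\bar u=0$.

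It then remains to unfold the two set definitions. The membership $\bar u\in\Xi^1(\bx,\by,\bar\lambda)$ is, by definition, $\nabla_{yy}^2\cL(\bx,\by,\bar\lambda)\bar u=0$ together with $0\le-\nabla_y g(\bx,\by)^\top\bar u-g(\bx,\by)\ \bot\ \bar\lambda\ge0$, while $\bar\lambda\in\Sigma(\bx,\by)$ gives $\nabla_y\cL(\bx,\by,\bar\lambda)=0$ and $0\le-g(\bx,\by)\ \bot\ \bar\lambda\ge0$. Gathering these with the stationarity identity $\nabla_x\cL(\bx,\by,\bar\lambda)+\nabla_{xy}^2\cL(\bx,\by,\bar\lambda)\bar u=0$ is precisely the system \eqref{ld}, which completes the argument.

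The substantive work — replacing the maximal value function by the minimal value function of the Wolfe dual and extracting the single-solution estimate \eqref{regular} — is already done inside Theorem \ref{thm sub}(i), so I do not anticipate a genuine obstacle. The only point that must be handled carefully is that the interiority of $\bx$ is what permits working with $\widehat{\partial} V(\bx)$ alone: this is why none of the heavier hypotheses of Theorem \ref{thm sub} (restricted sup-compactness, a stable parametric CQ at all solutions, continuity of $V$) and no Lipschitz property of $V$ enter here — those are needed only for the limiting and horizon estimates, which play no role in this theorem.
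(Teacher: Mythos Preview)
Your proposal is correct and follows exactly the paper's own approach: use $\bx\in{\rm int}\,X$ to get $0\in\widehat{\partial}V(\bx)$ by Fermat's rule, then apply part (i) of Theorem \ref{thm sub} (the Fr\'echet estimate \eqref{regular}) and unpack the definitions of $\Xi^1$ and $\Sigma$. The paper's proof is terser but does precisely this.
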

\begin{proof}
By Fermat's rule and the optimality of $\bx$, it follows that $0\in \widehat{\partial} V(\bar x)$ since $\bar x$ is an interior point of $X$. Then the result follows from  Theorem \ref{thm sub} immediately.
\end{proof}

When $\bx$ is a boundary point of the set $X$, we do need the Lipschitz continuity of the maximal value function at $\bar x$. In this case the optimality condition holds under some extra assumptions which would guarantee the locally Lipschitz continuity of the value function.

\begin{thm}\label{thm-mima}
Suppose that $f(x,y)$ is concave and $g(x,y)$ is convex in variable $y$ for all $x\in X$ and $f(x,y), g(x,y)$ are second order continuously differentiable. Let $\bx\in {\rm bdy} X$ be a local solution to the problem $(P_V)$. Suppose that  for the inner problem, the restricted sup-compactness holds at $\bx$ and one of the following conditions hold:
\begin{itemize}
\item[(a)] MFCQ holds at all $y\in {\cal S}(\bx)$, {a stable CQ holds at all $(\bx, \tilde y,\tilde{\lambda})$ where $\tilde y \in {\cal S}(\bar x)$ and $\tilde{\lambda}\in \Sigma(\bx, \tilde y)$  for the constraint system (\ref{constraintsystem-new})};
\item[(b)] The RS holds at some $y\in \cS(\bx)$, a stable parametric CQ holds at all ${y}\in \cS(\bx)$
 and {a stable CQ} holds at all $(\bx, \tilde y,\tilde{\lambda})$ where $\tilde y \in {\cal S}(\bar x)$ and $\tilde{\lambda}\in \Sigma(\bx, \tilde y)$  for the constraint system (\ref{constraintsystem-new}).
Suppose that
\begin{equation}\label{singu-lip}
 \bigcup_{y\in {\cal S}(\bx), \lambda\in \Sigma(\bx,y)} \left\{
 \nabla_{xy}^2 \cL(\bx,y,{\lambda}) u:\begin{array}{l} \nabla_{yy}^2 \cL(\bx,y,{\lambda})u=0,\\[4pt]
0\le -\nabla_y g(\bx,y)^\top u \ \bot\ \lambda \ge 0
 \end{array}
 \right\} =\{0\}.
\end{equation}
\end{itemize}
Then there exists a point $\bar{y}\in {\cal S}(\bar x)$,  $\bar \lambda \in \Sigma(\bx,\by) $ and a multiplier $\bar u$ such that the following system holds.
\begin{eqnarray}\label{ldnew}
&& \begin{array}{l}
0\in \nabla_x \cL(\bx,\by,\bar\lambda) + \nabla_{xy}^2 \cL(\bx,\by,\bar\lambda)\bar u + \cN_X(\bx),\\ [5pt]
\nabla_y \cL(\bx,\by,\bar\lambda)=0, \  \nabla_{yy}^2 \cL(\bx,\by,\bar\lambda)\bar u=0,\\ [5pt]
 0\le -\nabla_{y}g(\bx,\by)^\top \bar u- g(\bx,\by)\ \bot\ \bar{\lambda} \ge0,\
  0\le - g(\bx,\by)\ \bot\ \bar{\lambda} \ge0.\end{array}
 \end{eqnarray}
\end{thm}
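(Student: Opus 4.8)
The plan is to combine Fermat's rule for the constrained problem $(P_V)$ with the limiting upper estimate $(\ref{limiting})$ from Theorem \ref{thm sub}. The only nontrivial prerequisite is the local Lipschitz continuity of $V$ at $\bx$, and this is exactly what conditions (a) and (b) are designed to supply; once it is in hand, everything else is a direct unpacking of definitions.

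First I would establish that $V$ is Lipschitz continuous at $\bx$ and that the estimate $(\ref{limiting})$ is valid. Under (a), the restricted sup-compactness at $\bx$ together with MFCQ at every $y\in\cS(\bx)$ gives local Lipschitz continuity of $V$ directly (Clarke, as recalled right after Theorem \ref{thm sub}); moreover MFCQ is a stable parametric CQ, so the hypotheses of Theorem \ref{thm sub} (restricted sup-compactness, a stable parametric CQ at all $y\in\cS(\bx)$, and a stable CQ at all $(\bx,\tilde y,\tilde\lambda)$ for $(\ref{constraintsystem-new})$) are all met and $(\ref{limiting})$ holds. Under (b), I would first note that RS at some $\by\in\cS(\bx)$ forces $\lim_{x\to\bx}\dist(\by,\cF(x))=0$, so Proposition \ref{lema-conti} yields continuity of $V$ at $\bx$; then, since $\Xi^0(\bx,y,\lambda)$ is precisely the set of $u$ with $\nabla^2_{yy}\cL(\bx,y,\lambda)u=0$ and $0\le-\nabla_yg(\bx,y)^\top u\perp\lambda\ge0$, the set in $(\ref{singu-lip})$ is exactly the one in $(\ref{condition2})$, so Corollary \ref{Cor4.2} applies and delivers both $\partial^\infty V(\bx)=\{0\}$ (hence Lipschitz continuity) and the estimate $(\ref{limiting})$.

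Second, since $\bx$ is a local minimizer of $V+I_X$ and $V$ is Lipschitz around $\bx$, Fermat's rule gives $0\in\widehat{\partial}(V+I_X)(\bx)\subseteq\partial(V+I_X)(\bx)$, and the sum rule for the limiting subdifferential (valid because $V$ is Lipschitz) yields $0\in\partial V(\bx)+\cN_X(\bx)$. Substituting $(\ref{limiting})$, there exist $\by\in\cS(\bx)$, $\bar\lambda\in\Sigma(\bx,\by)$ and $\bar u\in\Xi^1(\bx,\by,\bar\lambda)$ such that $0\in\nabla_x\cL(\bx,\by,\bar\lambda)+\nabla^2_{xy}\cL(\bx,\by,\bar\lambda)\bar u+\cN_X(\bx)$. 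Finally, unwinding $\bar\lambda\in\Sigma(\bx,\by)$ (which gives $\nabla_y\cL(\bx,\by,\bar\lambda)=0$ and $0\le-g(\bx,\by)\perp\bar\lambda\ge0$) and $\bar u\in\Xi^1(\bx,\by,\bar\lambda)$ (which gives $\nabla^2_{yy}\cL(\bx,\by,\bar\lambda)\bar u=0$ and $0\le-\nabla_yg(\bx,\by)^\top\bar u-g(\bx,\by)\perp\bar\lambda\ge0$) produces exactly system $(\ref{ldnew})$.

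The main obstacle is the bookkeeping in the first step: one must check, separately in each branch, that every hypothesis of Theorem \ref{thm sub} (or of Corollary \ref{Cor4.2}) is in force, and in particular recognize $(\ref{singu-lip})$ as the instance of $(\ref{condition2})$ needed to kill $\partial^\infty V(\bx)$ under (b) and to get $V$ Lipschitz when MFCQ is unavailable. After that, the argument is purely the Fermat-rule-plus-sum-rule step followed by a routine translation of the membership $\bar u\in\Xi^1$, $\bar\lambda\in\Sigma$ into the componentwise complementarity relations in $(\ref{ldnew})$.
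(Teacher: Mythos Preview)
Your proposal is correct and follows essentially the same route as the paper's proof: establish Lipschitz continuity of $V$ at $\bx$ (via Clarke under (a), via Corollary~\ref{Cor4.2} under (b)), apply Fermat's rule plus the sum rule to get $0\in\partial V(\bx)+\cN_X(\bx)$, then invoke the estimate~(\ref{limiting}) and unpack the definitions of $\Sigma$ and $\Xi^1$. If anything, your version is slightly more careful than the paper's, since you explicitly verify the continuity hypothesis of Corollary~\ref{Cor4.2} in case~(b) by observing that RS forces $\dist(\by,\cF(x))\to0$ and invoking Proposition~\ref{lema-conti}, a step the paper leaves implicit.
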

\begin{proof}
Since $\bar x\in \displaystyle \arg \min_{x\in X} V(x)$, by Fermat's rule,
$
0\in \partial (V+I_X)(\bx),
$ where $I_C(c)$ denotes the indicator function to set $C$ at $c$.

(a) By Clarke \cite[Corollary 1 to Theorem 6.5.2]{Clarke} under the restricted sup-compactness and  MFCQ condition,  $V(x)=-v(x)$ is Lipschitz continuous at $\bx$. Thus by   Rockafellar and Wets \cite[Corollary 10.10]{Rock98}, $0\in \partial V(\bx)+ \cN_X(\bx)$.  Noting that  MFCQ is a stable parametric CQ, the desired result follows immediately from Theorem \ref{thm sub}.

(b) By Corollary \ref{Cor4.2}, $V(x)$ is Lipschitz continuous at $\bx$ and then by Rockafellar and Wets \cite[Corollary 10.10]{Rock98}, $0\in \partial V(\bx)+ \cN_X(\bx)$. Then the desired result follows from Theorem \ref{thm sub} immediately.
\end{proof}

We give some comments on stationarity conditions derived by using MPEC approach (in Theorem \ref{thm-stampec}) and by the Wolfe duality approach (in Theorems \ref{thm-mi} and \ref{thm-mima}). First, we give an implication relationship as follows.
\begin{thm}\label{thm-com}
Let $\bx\in X$. Assume that there exist $\by\in \cS(\bx)$, $\lambda\in \Sigma(\bx,\by)$ and a multiplier $\bar u$ such that \eqref{ldnew} holds. Then $(\bx,\by,\bar\lambda)$ is an S-stationary point of ${\rm (P_{MPEC})}$.
\end{thm}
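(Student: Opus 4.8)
The plan is to unpack the definition of an S-stationary point of $\mathrm{(P_{MPEC})}$ (Definition \ref{defi-station}) and simply read off the required multipliers from the system \eqref{ldnew}. The system \eqref{ldnew} already contains a vector $\bar u$ satisfying $\nabla_x \cL(\bx,\by,\bar\lambda) + \nabla_{xy}^2 \cL(\bx,\by,\bar\lambda)\bar u \in -\cN_X(\bx)$, $\nabla_{yy}^2 \cL(\bx,\by,\bar\lambda)\bar u = 0$, $\nabla_y \cL(\bx,\by,\bar\lambda)=0$, the complementarity $0\le -g(\bx,\by)\perp\bar\lambda\ge0$, and the extra sign/complementarity condition $0\le -\nabla_y g(\bx,\by)^\top \bar u - g(\bx,\by)\perp\bar\lambda\ge0$. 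The weak stationarity system \eqref{weak-sta} asks for multipliers $(\bar u,\bar\alpha,\bar\beta)$; the natural choice is to keep the same $\bar u$, expand $\nabla_x\cL$ and $\nabla_y\cL$ as $\nabla_x f - \nabla_x g\,\bar\lambda$ and $\nabla_y f - \nabla_y g\,\bar\lambda$, and set $\bar\alpha := -\bar\lambda$ and $\bar\beta := -\nabla_y g(\bx,\by)^\top \bar u$. I would then verify line by line that \eqref{weak-sta} holds: the first inclusion becomes $0\in \nabla_x f + \nabla_x g\,\bar\alpha + \nabla_{xy}^2\cL\,\bar u + \cN_X(\bx)$, which is exactly the first line of \eqref{ldnew} after substitution; the second equation is $\nabla_y f + \nabla_y g\,\bar\alpha + \nabla_{yy}^2\cL\,\bar u = \nabla_y\cL + \nabla_{yy}^2\cL\,\bar u = 0$; the third is $-\nabla_y g(\bx,\by)^\top\bar u - \bar\beta = 0$, true by the definition of $\bar\beta$; and the fourth line of \eqref{weak-sta} is the fourth line of \eqref{ldnew}.

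The next step is to check the index-set sign conditions. On $\cI_{+0}$ we have $-g_i(\bx,\by)>0$ and $\bar\lambda_i=0$, so $\bar\alpha_i = -\bar\lambda_i = 0$, as required. On $\cI_{0+}$ we have $-g_i(\bx,\by)=0$ and $\bar\lambda_i>0$; the complementarity $0\le -\nabla_y g(\bx,\by)^\top\bar u - g(\bx,\by)\perp\bar\lambda\ge0$ forces $\big(-\nabla_y g(\bx,\by)^\top\bar u - g(\bx,\by)\big)_i = 0$, i.e. $\big(-\nabla_y g(\bx,\by)^\top\bar u\big)_i = g_i(\bx,\by) = 0$, hence $\bar\beta_i = 0$. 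This establishes weak stationarity. Finally, for the S-stationarity refinement \eqref{s-multiplier} I must check that $(\bar\alpha_i,\bar\beta_i)\ge 0$ for $i\in\cI_{00}$. On $\cI_{00}$ we have $g_i(\bx,\by)=0$ and $\bar\lambda_i=0$, so $\bar\alpha_i = -\bar\lambda_i = 0\ge 0$; and from $0\le -\nabla_y g(\bx,\by)^\top\bar u - g(\bx,\by)$ together with $g_i(\bx,\by)=0$ we get $\bar\beta_i = \big(-\nabla_y g(\bx,\by)^\top\bar u\big)_i \ge 0$. Thus $(\bx,\by,\bar\lambda)$ is S-stationary.

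The argument is essentially bookkeeping, so there is no deep obstacle; the one place to be careful is the sign convention in the Lagrangian. The paper defines $\cL(\bx,y,\lambda) = f(\bx,y) - g(\bx,y)^\top\lambda$, so $\nabla_x\cL = \nabla_x f - \nabla_x g\,\lambda$ and the identification $\bar\alpha = -\bar\lambda$ (rather than $+\bar\lambda$) is exactly what makes the first line of \eqref{weak-sta}, which is written with a $+\nabla_x g(\bx,\by)\bar\alpha$ term, match the first line of \eqref{ldnew}. The key insight that makes everything work is that the Wolfe-dual stationarity system \eqref{ldnew} carries \emph{two} complementarity conditions on $\bar\lambda$ — the ordinary one $0\le -g\perp\bar\lambda\ge0$ and the ``dual'' one $0\le -\nabla_y g^\top\bar u - g\perp\bar\lambda\ge0$ — and it is precisely the second one that supplies the nonnegativity and vanishing of $\bar\beta$ on the biactive index set $\cI_{00}$ and on $\cI_{0+}$, which is what distinguishes S-stationarity from the weaker C- or M-stationarity. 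I would close by remarking that this shows the Wolfe duality approach delivers the strongest MPEC stationarity concept directly, without any MPEC-tailored constraint qualification such as MPEC-LICQ.
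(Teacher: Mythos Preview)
Your proof is correct and follows essentially the same approach as the paper: you make the identical choice of multipliers $\bar\alpha=-\bar\lambda$ and $\bar\beta=-\nabla_y g(\bx,\by)^\top\bar u$, verify the lines of \eqref{weak-sta} in the same way, and use the two complementarity conditions in \eqref{ldnew} to establish the sign conditions on $\cI_{+0}$, $\cI_{0+}$, and $\cI_{00}$ exactly as the paper does. Your added commentary on the sign convention and on the role of the ``dual'' complementarity condition is accurate and helpful, though not present in the paper's proof.
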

\begin{proof}
Assume that condition \eqref{ldnew} holds. Let $\bar\alpha = -\bar\lambda$ and $\bar\beta = -\nabla_{y}g(\bx,\by)^\top \bar u$. By these definitions and \eqref{ldnew},  the third and fourth lines of \eqref{weak-sta} hold. We also note that
\begin{eqnarray*}
&& \nabla_xf(\bx,\by) + \nabla_x g(\bx,\by)\bar\alpha = \nabla_xf(\bx,\by) - \nabla_x g(\bx,\by)\bar\lambda = \nabla_x \cL(\bx,\by,\bar\lambda),\\
&& \nabla_yf(\bx,\by) + \nabla_y g(\bx,\by)\bar\alpha = \nabla_yf(\bx,\by) - \nabla_y g(\bx,\by)\bar\lambda = \nabla_y \cL(\bx,\by,\bar\lambda)=0.
\end{eqnarray*}
Thus, it follows from these two conditions and condition \eqref{ldnew} that the first and second lines of \eqref{weak-sta} hold. By the complementarity condition $0\le - g(\bx,\by)\ \bot\ \bar{\lambda} \ge0$, it follows that $\bar\alpha_i = -\lambda_i = 0$ if $g_i(\bx,\by)<0$. By the two complementarity conditions in last line of condition \eqref{ldnew}, it follows that $-\nabla_{y}g_i(\bx,\by)^\top \bar u- g_i(\bx,\by)=0$ and  $g_i(\bx,\by)=0$ if $\bar\lambda_i<0$, $-\nabla_{y}g_i(\bx,\by)^\top \bar u\ge0$ if $g_i(\bx,\by)=\bar\lambda_i=0$. These imply that $\bar\beta_i = -\nabla_{y}g_i(\bx,\by)^\top \bar u =0$ if $\bar\lambda_i<0$ and $\bar\beta_i = -\nabla_{y}g_i(\bx,\by)^\top \bar u \ge 0$ if $g_i(\bx,\by)=\bar\lambda_i=0$. Thus, condition \eqref{s-multiplier} and the last line of condition \eqref{weak-sta} hold.
\end{proof}

Theorem \ref{thm-com} shows that the stationarity condition derived by using by the Wolfe duality approach is stronger than that derived by using the MPEC approach. We now compare the required sufficient conditions. MPEC-LICQ required in Theorem \ref{thm-stampec} is a very stringent condition as discussed after Theorem \ref{thm-stampec} while the conditions required in Theorems \ref{thm-mi} and \ref{thm-mima} are commonly used for nonlinear programming problems.

Based on Corollaries  \ref{cor-regu} and \ref{cor2}, we have the following result.
\begin{thm}Suppose that $f(x,y)$ is concave and $g(x,y)$ is convex in variable $y$ for all $x\in X$ and $f(x,y), g(x,y)$ are second order continuously differentiable.
Let $\bx$ be a local solution to the problem $(P_V)$.
\begin{itemize}
\item[ (i)] Assume that $\bar x\in {\rm int} X$,  $\bar y \in {\cal S} (\bar x)$ and  $\bar{\lambda}\in \Sigma(\bx,\by)$. Suppose that either $f(x,y)$ and $g(x,y)$  are separable with respect to $x$ and $y$ and a CQ holds at  $(\bx,\by,\bar{\lambda})$ for the constraint system \eqref{constraintsystem-new}
or SOSC holds at $(\bar y,\bar\lambda)$ for problem $(P_\bx)$. Then there exists $\bar \lambda$ such that the following system holds:
$$0= \nabla_x \cL(\bx,\by,\bar\lambda),\
 \nabla_y \cL(\bx,\by,\bar\lambda)=0,\ 0\le - g(\bx,\by)\ \bot\ \bar{\lambda} \ge0.$$

\item[(ii)] Assume that $\bar x\in {\rm bdy} X$. Suppose that  for the inner problem, the restricted sup-compactness holds at $\bx$,   RS holds at some $y\in \cS(\bx)$ and  a stable parametric CQ holds at all $y\in \cS(\bx)$. If either $f(x,y)$ and $g(x,y)$ are separable with respect to $x$ and $y$ and a stable CQ holds at all $(\bx,\tilde y,\tilde{\lambda})$ for the constraint system (\ref{constraintsystem-new}) where $\tilde y\in {\cal S}(\bar x)$ and  $\tilde{\lambda}\in \Sigma(\bx,\tilde y)$, or SOSC holds at all $(\tilde y,\tilde{\lambda})$ where $\tilde y\in {\cal S}(\bar x)$ and  $\tilde{\lambda}\in \Sigma(\bx,\tilde y)$, then there exist $\bar y\in \cS(\bx)$ and $\bar \lambda$ such that
\begin{equation}\label{op}
 0\in \nabla_x \cL(\bx,\by,\bar\lambda)+\cN_X(\bar x),\
 \nabla_y \cL(\bx,\by,\bar\lambda)=0,\ 0\le - g(\bx,\by)\ \bot\ \bar{\lambda} \ge0.
 \end{equation}
 \end{itemize}
\end{thm}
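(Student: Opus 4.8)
The plan is to reduce both parts to the subdifferential estimates of Corollaries \ref{cor-regu} and \ref{cor2}, combined with Fermat's rule applied to $(P_V)$; the analytic work has essentially already been done in those corollaries, so what remains is to choose the right subdifferential (Fr\'echet in the interior case, limiting in the boundary case) and to unfold the definition of the multiplier set $\Sigma(\bx,\by)$.

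For part (i), since $\bx\in{\rm int}\,X$ is a local solution of $(P_V)$, it is an unconstrained local minimizer of $V$, and hence $0\in\widehat{\partial}V(\bx)$ straight from the definition of the regular subdifferential --- no Lipschitz continuity of $V$ is required, which is exactly why the interior case needs only mild hypotheses. The stated assumptions (either separability of $f,g$ in $x$ and $y$ plus a CQ at $(\bx,\by,\bar\lambda)$ for \eqref{constraintsystem-new}, or SOSC at $(\by,\bar\lambda)$ for $(P_\bx)$) are precisely the hypotheses of Corollary \ref{cor-regu}, which gives $\widehat{\partial}V(\bx)\subseteq\{\nabla_x\cL(\bx,\by,\bar\lambda)\}$. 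Therefore $\nabla_x\cL(\bx,\by,\bar\lambda)=0$, and the two remaining relations $\nabla_y\cL(\bx,\by,\bar\lambda)=0$ and $0\le-g(\bx,\by)\perp\bar\lambda\ge0$ are just a rewriting of $\bar\lambda\in\Sigma(\bx,\by)$.

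For part (ii), $\bx$ is a boundary point of $X$, so before applying a calculus rule I first need $V$ to be Lipschitz around $\bx$. This is delivered by the last assertion of Corollary \ref{cor2}: the restricted sup-compactness at $\bx$, the stable parametric CQ at all $y\in\cS(\bx)$, the stable CQ for \eqref{constraintsystem-new} (or SOSC), together with RS of $\cF$ at some $y\in\cS(\bx)$, imply that $V$ is Lipschitz continuous at $\bx$. Then, as $\bx$ minimizes $V$ over $X$, Fermat's rule gives $0\in\partial(V+I_X)(\bx)$, and the Lipschitz property permits the limiting-subdifferential sum rule (Rockafellar and Wets \cite[Corollary 10.10]{Rock98}) to yield $0\in\partial V(\bx)+\cN_X(\bx)$. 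Invoking the first inclusion of Corollary \ref{cor2}, $\partial V(\bx)\subseteq\bigcup_{y\in\cS(\bx)}\{\nabla_x\cL(\bx,y,\lambda):\lambda\in\Sigma(\bx,y)\}$, so there are $\by\in\cS(\bx)$ and $\bar\lambda\in\Sigma(\bx,\by)$ with $0\in\nabla_x\cL(\bx,\by,\bar\lambda)+\cN_X(\bx)$; the remaining two relations of \eqref{op} come once more from $\bar\lambda\in\Sigma(\bx,\by)$.

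I do not expect a serious obstacle, since the result is at the level of a corollary; the one point needing care is part (ii), where one must check that the hypotheses genuinely produce Lipschitz continuity of $V$ at $\bx$ --- without it Fermat's rule would give only the weaker inclusion modulo a singular-subdifferential qualification condition, and the clean statement \eqref{op} would fail. Confirming that the RS assumption (via Corollary \ref{cor2}) bridges this gap is the crux of the argument.
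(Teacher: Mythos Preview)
Your proposal is correct and follows essentially the same approach as the paper: part (i) is deduced from $0\in\widehat{\partial}V(\bx)$ together with Corollary \ref{cor-regu}, and part (ii) from Corollary \ref{cor2} (first to secure Lipschitz continuity of $V$ via the RS assumption, then to estimate $\partial V(\bx)$) combined with Fermat's rule and the sum rule. Your write-up is in fact a bit more explicit than the paper's, but the logical skeleton is identical.
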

\begin{proof}
The result (i) follows from Corollary  \ref{cor-regu} and the fact that $0\in \widehat{\partial}V(\bx)$ immediately. Assume that $\bar x\in {\rm bdy} X$. By the optimality of $\bx$, we have $0\in \partial (V + I_X)(\bx)$. Then by Corollary \ref{cor2}, it follows that $V$ is Lipschitz continuous at $\bx$. Thus, $0\in \partial V(\bx) + \cN_X(\bx)$. The results in (ii) follow from  Corollary \ref{cor2}.
\end{proof}

\subsection{Optimality conditions for nonconvex-nonconcave minimax problems}

In this section we discuss the general nonconvex-nonconcave case.
Let $(\bar x,\bar y)$ be a local solution to the  general nonconvex-nonconcave minimax problem in the sense of Stackelberg. If the value function $V(x)$ is Lipschitz continuous, then
\begin{equation}\label{convexhull}
0\in   \co \bigcup_{y\in {\cal S}(\bar x)} \left \{\nabla_x \cL(\bx,y,\bar{\lambda}): \bar{\lambda} \in \Sigma(\bar x, y) \right \} +\cN_X(\bx).\end{equation}
Moreover by Caratheodory's theorem, the convex hull on the set ${\cal S}(\bar x)$ can be represented by the convex hull of no more than $n+1$ points in ${\cal S}(\bar x)$.
This is the type of necessary optimality condition proposed in Ye and Zhu \cite[Theorem 4.1]{Ye-Zhu}.
Note that it is easy to see that the restricted sup-compactness condition was missed in the statement of \cite[Theorem 4.1]{Ye-Zhu}.

In this subsection we will derive optimality conditions under which (\ref{convexhull}) is replaced by
$$ 0\in  \left \{\nabla_x \cL(\bx,\by,\bar{\lambda}): \bar{\lambda} \in \Sigma(\bar x, \by)  \right \}+\cN_X(\bx),$$
where $\by$ is a given solution chosen from  ${\cal S}(\bx)$.

The first result is based on the inner semi-continuity of the solution map $\cS(x)$.
\begin{thm}\label{thm5.3}
Let $(\bx,\by)$ be a local solution to the minimax problem $(P_{\rm minimax})$. Suppose that  the solution map
${\cal S}(x)$ is inner semi-continuous at $(\bar x,\bar y)$,  the local error bound holds for the system $g(x,y)\leq 0$ at $(\bar x, \bar y)$, and
 \begin{equation}\label{sigular2}
 \{\nabla_x g(\bx,\by) {\lambda}:{\lambda} \in \Sigma^0(\bar x,\bar y)\}=\{0\}.
 \end{equation}
Then there exists $\bar \lambda$ such that the system \eqref{op} holds.
\end{thm}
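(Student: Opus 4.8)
The plan is to combine the Fermat rule for the problem $(P_V)=\min_{x\in X}V(x)$ with the single-solution upper estimates for the maximal value function established in Theorem \ref{thm4.1}. Since $(\bx,\by)$ is a local solution of $(P_{\rm minimax})$ in the sense of Stackelberg, by definition $\by$ is a global maximizer of the inner problem, so $\by\in\cS(\bx)$, and $\bx$ is a local minimizer of $V$ over $X$. First I would invoke Theorem \ref{thm4.1}: its hypotheses are precisely the inner semi-continuity of $\cS(x)$ at $(\bx,\by)$, the local error bound for $g(x,y)\le 0$ at $(\bx,\by)$, and the nondegeneracy condition \eqref{sigular2} (which is \eqref{sigular1}). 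Theorem \ref{thm4.1} then gives that $V$ is Lipschitz continuous at $\bx$ and
$$
\partial V(\bx)\subseteq\left\{\nabla_x\cL(\bx,\by,\lambda):\lambda\in\Sigma(\bx,\by)\right\}.
$$

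Next, because $\bx$ is a local minimizer of $V$ over the closed set $X$ and $V$ is Lipschitz continuous at $\bx$, the generalized Fermat rule together with the sum rule for the limiting subdifferential of $V+I_X$ (Rockafellar and Wets \cite[Corollary 10.10]{Rock98}, using that $V$ is locally Lipschitz so the qualification condition $\partial^\infty V(\bx)\cap(-\cN_X(\bx))=\{0\}$ holds automatically) yields
$$
0\in\partial V(\bx)+\cN_X(\bx).
$$
Combining this with the upper estimate from Theorem \ref{thm4.1}, there exist $\bar\lambda\in\Sigma(\bx,\by)$ and a vector in $\cN_X(\bx)$ whose sum with $\nabla_x\cL(\bx,\by,\bar\lambda)$ is zero; that is,
$$
0\in\nabla_x\cL(\bx,\by,\bar\lambda)+\cN_X(\bx).
$$
The remaining two relations in \eqref{op}, namely $\nabla_y\cL(\bx,\by,\bar\lambda)=0$ and $0\le -g(\bx,\by)\ \bot\ \bar\lambda\ge 0$, are exactly the statement that $\bar\lambda\in\Sigma(\bx,\by)$, which we already have. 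Hence \eqref{op} holds.

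The bulk of the work is already packaged inside Theorem \ref{thm4.1}, so there is no serious obstacle here; the only point requiring a little care is the application of the sum rule for the limiting subdifferential of $V+I_X$ — one must note that the Lipschitz continuity of $V$ at $\bx$ forces $\partial^\infty V(\bx)=\{0\}$, which secures the required qualification condition for the sum rule and also ensures $V+I_X$ is the right object whose limiting subdifferential at the minimizer contains $0$. Everything else is bookkeeping: translating ``$\bx$ is a Stackelberg local solution'' into ``$\by\in\cS(\bx)$ and $\bx$ locally minimizes $V$ on $X$'', and reading off that membership of $\Sigma(\bx,\by)$ is equivalent to the second and third lines of \eqref{op}.
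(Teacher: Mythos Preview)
Your proposal is correct and follows essentially the same approach as the paper: invoke Theorem \ref{thm4.1} to obtain Lipschitz continuity of $V$ at $\bx$ and the single-solution upper estimate for $\partial V(\bx)$, then combine with the Fermat rule $0\in\partial V(\bx)+\cN_X(\bx)$. Your write-up is in fact slightly more detailed than the paper's, spelling out the sum-rule justification via $\partial^\infty V(\bx)=\{0\}$ and the bookkeeping that $\bar\lambda\in\Sigma(\bx,\by)$ yields the last two lines of \eqref{op}.
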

\begin{proof} 
Under the assumptions, by Theorem \ref{thm4.1} it follows that $V(x)$ is Lipschitz continuous at $\bx$ and
\[
\partial V(\bar x) \subseteq \{\nabla_x \cL(\bx,\by,\bar{\lambda}) : \bar{\lambda} \in \Sigma(\bar x,\bar y)\}
\]
The proof is complete by combining the above inclusion with Fermat's rule (\ref{Fermat}).
\end{proof}

The following corollary follows from Corollary \ref{cor4.1} and  Fermat's rule (\ref{Fermat}) immediately.

\begin{cor}Let $(\bx,\by)$ be a local solution to the minimax problem $(P_{\rm minimax})$.
Suppose that  the solution map
${\cal S}(x)$ is inner semi-continuous at $(\bar x,\bar y)$,  and  MFCQ  for the system $g(\bx,y)\leq 0$ holds at $y=\by$.
 Then there exists $\bar \lambda$  such that the system (\ref{op}) holds.
\end{cor}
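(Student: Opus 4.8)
The plan is to chain together the definition of a Stackelberg solution, Corollary~\ref{cor4.1}, and the Fermat-type stationarity rule~\eqref{Fermat}; this is a short argument with no genuinely hard step. First I would unwind what it means for $(\bx,\by)$ to be a local solution of $(P_{\rm minimax})$ in the sense of Stackelberg: by definition $\by$ is a global maximizer of the inner problem $\max_{y\in\cF(\bx)}f(\bx,y)$, so $\by\in\cS(\bx)$, and $\bx$ is a local minimizer of $(P_V)$, i.e.\ of $\min_{x\in X}V(x)$. Together with the two standing hypotheses --- inner semi-continuity of $\cS(x)$ at $(\bx,\by)$ and MFCQ for $g(\bx,y)\le0$ at $y=\by$ --- this places us exactly in the setting of Corollary~\ref{cor4.1}.

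Next I would invoke Corollary~\ref{cor4.1}, which yields two things simultaneously: $V$ is Lipschitz continuous at $\bx$, and
$$\partial V(\bx)\subseteq\left\{\nabla_x\cL(\bx,\by,\lambda):\lambda\in\Sigma(\bx,\by)\right\}.$$
The Lipschitz continuity is what makes the stationarity step clean: since $\bx$ is a local minimizer of $V$ over $X$, Fermat's rule gives $0\in\partial(V+I_X)(\bx)$, and local Lipschitzness of $V$ at $\bx$ lets the limiting sum rule (Rockafellar and Wets~\cite[Corollary~10.10]{Rock98}) split this as $0\in\partial V(\bx)+\cN_X(\bx)$, which is precisely~\eqref{Fermat}.

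Finally I would combine the two displays: there exists $\bar\lambda\in\Sigma(\bx,\by)$ with $0\in\nabla_x\cL(\bx,\by,\bar\lambda)+\cN_X(\bx)$, and unpacking the definition of $\Sigma(\bx,\by)$ --- namely $\nabla_y\cL(\bx,\by,\bar\lambda)=0$ together with $0\le-g(\bx,\by)\perp\bar\lambda\ge0$ --- gives exactly the three relations in~\eqref{op}. The only point requiring a little care is the passage $0\in\partial(V+I_X)(\bx)$ to $0\in\partial V(\bx)+\cN_X(\bx)$, where one must use the Lipschitz continuity supplied by Corollary~\ref{cor4.1} rather than any sup-compactness or calmness assumption; beyond that the corollary really is immediate, and I would keep the written proof to two or three lines.
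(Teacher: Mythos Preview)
Your proposal is correct and follows essentially the same route as the paper, which simply states that the corollary ``follows from Corollary~\ref{cor4.1} and Fermat's rule~(\ref{Fermat}) immediately.'' Your write-up just makes explicit the two implicit ingredients in that one-line proof: that Corollary~\ref{cor4.1} supplies the Lipschitz continuity needed to split $\partial(V+I_X)(\bx)$ via the sum rule, and that membership $\bar\lambda\in\Sigma(\bx,\by)$ already encodes the last two relations of~(\ref{op}).
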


The second result is based on the concavity of the value function $V(x)$.
\begin{thm}\label{thm5.4}
Let $(\bx,\by)$ be a local solution to the minimax problem $(P_{\rm minimax})$.
Suppose that $f(x,y)$ is jointly concave and $g(x,y)$ is jointly quasiconvex in $(x,y)$, and there is an open set ${\cal O} \ni \bar x$ such that $\cF(x)$ is nonempty and the objective function $f(x,y)$ is bounded above on $\cF(x)$. Assume further that a CQ holds for $g(x,y)\leq 0$ at $(\bx,\by)$.
Then there exists $\bar \lambda$ such that the system (\ref{op}) holds.
\end{thm}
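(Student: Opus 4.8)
The plan is to derive this statement as an essentially immediate corollary of Theorem \ref{thm4.2} combined with Fermat's rule, exactly in the spirit of how Theorem \ref{thm5.3} was obtained from Theorem \ref{thm4.1}. First I would unpack the hypothesis that $(\bx,\by)$ is a local solution of $(P_{\rm minimax})$ in the sense of Stackelberg: by definition this means $\by$ is a global maximizer of the inner problem, i.e. $\by\in\cS(\bx)$, and $\bx$ is a local minimizer of $(P_V)$, i.e. of $\min_{x\in X}V(x)$; equivalently $\bx$ is a local minimizer of $V+I_X$.

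Next I would apply Theorem \ref{thm4.2} verbatim with the present data — $f$ jointly concave and $g$ jointly quasiconvex in $(x,y)$, $\cF(x)\neq\emptyset$ with $f(x,\cdot)$ bounded above on $\cF(x)$ for $x$ in an open set ${\cal O}\ni\bx$, and a CQ for $g(x,y)\le 0$ at $(\bx,\by)$. Theorem \ref{thm4.2} then delivers that $V$ is concave, Lipschitz continuous at $\bx$, and
\[
\partial V(\bx)\subseteq\bigl\{\nabla_x\cL(\bx,\by,\lambda):\lambda\in\Sigma(\bx,\by)\bigr\}.
\]

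Then, because $V$ is Lipschitz continuous near $\bx$ and $X$ is closed, I would apply Fermat's rule to the local minimizer $\bx$ of $V+I_X$ together with the limiting sum rule for a Lipschitz function plus an indicator (Rockafellar and Wets \cite[Corollary 10.10]{Rock98}) to obtain $0\in\partial V(\bx)+\cN_X(\bx)$. Feeding in the upper estimate from Theorem \ref{thm4.2} produces some $\bar\lambda\in\Sigma(\bx,\by)$ with $0\in\nabla_x\cL(\bx,\by,\bar\lambda)+\cN_X(\bx)$, and the membership $\bar\lambda\in\Sigma(\bx,\by)$ is, by its very definition, the two remaining relations $\nabla_y\cL(\bx,\by,\bar\lambda)=0$ and $0\le-g(\bx,\by)\ \bot\ \bar\lambda\ge0$. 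Together these are precisely system (\ref{op}).

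Since Theorem \ref{thm4.2} has already absorbed all the real analytic work — convexity of the minimal value function via Fiacco--Kyparisis, the exact subdifferential formula via Ye--Wu, and Lipschitz continuity from finiteness plus convexity — there is essentially no obstacle remaining here. The only point worth double-checking is that the upper estimate quoted from Theorem \ref{thm4.2} is already a convex set (so the convex hull in the general optimality condition (\ref{convexhull}) collapses), which is exactly what makes the conclusion depend on the single prescribed solution $\by$ rather than on all of $\cS(\bx)$ or on a Carath\'eodory combination of its elements.
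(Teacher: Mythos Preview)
Your proposal is correct and follows essentially the same approach as the paper: invoke Theorem \ref{thm4.2} to obtain Lipschitz continuity of $V$ at $\bx$ together with the upper estimate $\partial V(\bx)\subseteq\{\nabla_x\cL(\bx,\by,\lambda):\lambda\in\Sigma(\bx,\by)\}$, then combine with Fermat's rule (\ref{Fermat}). Your write-up is in fact slightly more explicit than the paper's, spelling out the sum rule citation and the unpacking of $\bar\lambda\in\Sigma(\bx,\by)$ into the last two relations of (\ref{op}).
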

\begin{proof} 
By Theorem \ref{thm4.2}, $V(x)$ is concave and  Lipschitz continuous at $\bx$.
Moreover
\[
\partial V(\bar x) \subseteq \{\nabla_x \cL(\bx,\by,\bar{\lambda}) : \bar{\lambda} \in \Sigma(\bar x,\bar y)\}.
\]
The proof is therefore  complete by combing the above inclusion with  Fermat's rule (\ref{Fermat}).
\end{proof}

\subsection{Applications in generative adversarial networks}

In this section, we will apply our new optimality conditions to the  adversarial modeling framework called  generative adversarial networks (GAN) first proposed by Goodfellow et al.  \cite{GAN2014}.
While the generative network  generates new samples, the discriminative network  tries to distinguish them from the true data.
Let $G(z;x)$ be a generator parameterized by $x$ and $D(s;y)$ be the probability that $s$ came from the data rather than the generator's distribution over data, where $y$ is the discriminator's parameter. Given $x$,  the discriminative network  is trained to maximize the probability of assigning the correct label to both training examples and samples from the generative network.  Thus, the minimax problem corresponding to GAN can be written as
\begin{equation}\label{gan}
	\min_{x}\max_{y}\, f(x,y):=\mathbb{E}_{s\sim p_{data}(s)}\left[\log D(s;y)\right]+\mathbb{E}_{z\sim p_{latent}(z)}\left[\log\left(1- D(G(z;x);y)\right)\right],
\end{equation}
where $p_{data}$ is the data distribution, and $p_{latent}$ is the latent distribution.

Consider the situation in which the generator  is a single-layer network with the identity activation function and the discriminator  is also a single-layer network with a logistic sigmoid activation function. That is,
\begin{equation}
	G(z;x)=x+z,\quad D(s;y)=\frac{1}{1+e^{y^T(s-\bar{s})}},
\end{equation}
where $x\in\mathbb{R}^n$ is the parameter of the generator, $y\in\mathbb{R}^n$ is the parameter of the discriminator, and $\bar{s}=\mathbb{E}_{s\sim p_{data}}[s]$ is the average vector over the dataset, cf. the simple CIFAR GANs in Grimmer et al. \cite{GLWM2022}.

For simplicity, generating one sample $s$ from $p_{data}$ and $z$ from $p_{latent}$ repectively, we have
\begin{align*}
	f(x,y)=&\log\left(\frac{1}{1+e^{y^T(s-\bar{s})}}\right)
	+\log\left(1-\frac{1}{1+e^{y^T(x+z-\bar{s})}}\right)\\
	=&y^T(x+z-\bar{s})-\log\left(1+e^{y^T(s-\bar{s})}\right)-\log\left(1+e^{y^T(x+z-\bar{s})}\right).
\end{align*}
 Note that
$$\nabla_x f(x,y)=\frac{1}{1+e^{y^T(x+z-\bar{s})}} y,
 \nabla_y f(x,y)=\frac{1}{1+e^{y^T(x+z-\bar{s})}}(x+z-\bar{s})-\frac{e^{y^T(s-\bar{s})}}{1+e^{y^T(s-\bar{s})}}(s-\bar{s}).
$$
\begin{equation*}
	\nabla_{xx}^2 f(x,y)=-\frac{e^{y^T(x+z-\bar{s})}}{\left[1+e^{y^T(x+z-\bar{s})}\right]^2} y y^T \preceq 0,
\end{equation*}
\begin{equation*}
	\nabla_{yy}^2 f(x,y)=-\frac{e^{y^T(x+z-\bar{s})}}{\left[1+e^{y^T(x+z-\bar{s})}\right]^2}(x+z-\bar{s})(x+z-\bar{s})^T
	-\frac{e^{y^T(s-\bar{s})}}{\left[1+e^{y^T(s-\bar{s})}\right]^2}(s-\bar{s})(s-\bar{s})^T\preceq 0.
\end{equation*}
Thus $\min_{x}\max_{y}f(x,y)$ is a nonconvex-concave minimax problem. Define $\bar{x}=s-z$. we have
$
\nabla_y f(\bar{x},y)=\frac{1-e^{y^T(s-\bar{s})}}{1+e^{y^T(s-\bar{s})}}(s-\bar{s})
$ and hence $
\nabla_y f(\bar{x},y)=0$ if and only if $y^T(s-\bar{s})=0$. By the
concavity of $f(x,y)$  in $y$ for every $x$, we  get   $\mathcal{S}(\bar{x})=\left\{y\in\mathbb{R}^n:y^T(s-\bar{s})=0\right\}$ and then $\max_{y}f(\bar{x},y)=-2\log 2$. On the other hand, note that $f(x,0)=-2\log 2$ and $\nabla_y f(x,0)=\frac{1}{2}(x+z-s)$. Hence $\nabla_y f(x,0)\neq 0 $ for all $x\neq\bar{x}$ and so
\begin{equation}\label{minimaxy0}
	\max_{y}f(x,y)>f(x,0)=-2\log 2=\max_{y}f(\bar{x},y) \mbox{  for all }x\neq\bar{x}.
\end{equation}
 Thus all of the optimal solutions of $\min_{x}\max_{y}f(x,y)$ are given by $\{(\bar{x},\bar{y}):\bar{y}\in\mathcal{S}(\bar{x})\}$.

We claim that the optimality condition in Theorem 4.3 holds for all of the optimal solutions of the above example, but the commonly used first-order condition in the sense of Nash equilibrium
\begin{equation} \label{stationary}
\nabla_x f(\bar{x},\bar{y})=0,\quad \nabla_y f(\bar{x},\bar{y})=0\end{equation}
 only applies to one of them, that is the optimal solution $(\bar{x},0)$. Recall that the stationary condition (\ref{stationary})  is widely used in characterizing (local) Nash equilirium and local minimax point defined in Jin et al. \cite{Jin} and used in the convergence analysis of various kinds of gradient descent ascent algorithms for minimax problems, cf. e.g., Definition 2.1 and Theorem 3.4 in Ozdaglar and Bertsekas \cite{OzBer}, Proposition 3.1 and Theorems 1 and 2 in Lu et al.  \cite{LTH2020}, Definition 4.10 and Propositions 4.11 and 4.12 in  Lin et al. \cite{LJJ2020}, Definition 5 and Theorem 12 in  Lin et al. \cite{LJJ2020b}, Definition 1 and Theorem 3.2 in Yang et al.\cite{YKH2020}, Definition 2.1, Remark 2.2 and Theorem 4.1 in Ostrovskii et al.\cite{OLR2021}. Next we prove our claim. Indeed, on the one hand,
 $\nabla_x f(\bar{x},\bar{y})=0$ implies that $\bar{y}=0$. This means that the optimal solutions $(\bar{x},\bar{y})$ with $0\neq\bar{y}\in\mathcal{S}(\bar{x})$ cannot be characterized by the usual first-order condition (\ref{stationary}).
On the other hand, it is easy to verify that the necessary conditions in Theorem \ref{thm-mi} would be satisfied, i.e., for any $\bar{y}\in\mathcal{S}(\bar{x})$, there exists $\bar{u}=-\bar{y}$ such that
\begin{align}
	&\nabla_x f(\bar{x},\bar{y})+\nabla_{xy}^2 f(\bar{x},\bar{y}) \bar{u}=0,\quad \nabla_{yy}^2 f(\bar{x},\bar{y}) \bar{u}=0,\quad \nabla_y f(\bar{x},\bar{y})=0\label{optimalityc}.
\end{align}

Furthermore, some constraint functions could be added in GAN~\eqref{gan} such that MPEC-LICQ fails while the conditions required in Theorem~\ref{thm-mi} hold. For example, taking $g(x,y)=\big(\frac{1}{2}\|y\|^2-\frac{1}{2}, y_1-1\big)^T$, $s-\bar{s}=(0,1)^T$, and $\bar{y}=(1,0)^T\in\mathbb{R}^2$, then $\bar x=s-z, \bar y=(1,0)^T$ is still an optimal solution because both $y=\bar y$ and $y=0$ satisfy the constraints $g(x,y)\leq0$ for all $x$ and the inequality \eqref{minimaxy0} holds.  Since
\begin{equation*}
	\nabla_{y} f(x,y)-\nabla_{y} g(x,y)^T \lambda
	=\frac{1}{1+e^{y^T(x+z-\bar{s})}}(x+z-\bar{s})-\frac{e^{y^T(s-\bar{s})}}{1+e^{y^T(s-\bar{s})}}(s-\bar{s})
	-\lambda_1 y- \lambda_2 (1,0)^T,
\end{equation*}
we have
$$\nabla_{y} f(\bar x,y)-\nabla_{y} g(\bar x,y)^T \lambda=-\lambda_1 y- \lambda_2 (1,0)^T.$$
Hence one can easily obtain $\Sigma(\bar{x},\bar{y})=\{(0,0)\}$. Since $\cI_{00}=\{1,2\}$, the vectors $\nabla_{(x,y,\lambda)} g_1(\bar{x},\bar{y})=(0,0,1,0,0,0)^T$ and $\nabla_{(x,y,\lambda)} g_2(\bar{x},\bar{y})=(0,0,1,0,0,0)^T$ are linearly dependent, MPEC-LICQ defined in Theorem \ref{thm-stampec} fails. But LICQ holds at $(\bar{x},\bar{y},0)$ for the constraint system \eqref{constraintsystem-new} since the matrix
\begin{equation*}
	\left(\nabla_{x,y,\lambda}\nabla_y\mathcal{L}(x,y,\lambda), \nabla_{x,y,\lambda} \lambda\right)
	=
\begin{bmatrix}
	\frac{1}{2} & -\frac{1}{4} & 0 & 0  \\
	0 & \frac{1}{2} & 0 & 0\\
	0 & 0 & 0 & 0 \\
	0 & -\frac{1}{2} & 0 & 0\\
	-1 & 0 & 1 & 0\\
	-1 & 0 &0 & 1
\end{bmatrix}
\end{equation*} has full column rank.
Since the multiplier $\bar \lambda_1=0, \bar \lambda_2=0$, the necessary optimality condition for the constrained case is the same as the unconstrained one in (\ref{optimalityc}).
This illustrates that  the stationarity condition  in Theorem~\ref{thm-com} by using the Wolfe duality approach is sharper than the one derived by using the MPEC approach while the conditions required in Theorems~\ref{thm-mi} and \ref{thm-mima} are, in general, weaker than MPEC-LICQ.

\section{Concluding remarks}\label{sec:con}

In this paper, we derived upper estimates for the Fr\'{e}chet, limiting, and horizon subdifferentials of the maximal value function with the union of all solutions or only one solution from the solution set.
Based on the derived results, we developed necessary optimality conditions for nonconvex (non-)concave minimax problems.


\section*{Acknowledgments}

The authors are grateful to the associated editor and two referees for their helpful comments and constructive suggestions.  In particular, we thank one of the referees for suggesting the sufficient condition for the continuity of the value function that is weaker than RS which we used in an earlier version of the paper.

\end{document}